\begin{document}

\theoremstyle{plain}
\newtheorem{thm}{Theorem}[section]
\newtheorem{cor}{Corollary}[section]
\newtheorem{prop}{Proposition}[section]
\newtheorem{lema}{Lemma}[section]
\theoremstyle{definition}
\newtheorem{rmk}{Remark}[section]
\newtheorem{df}{Definition}[section]
\renewcommand{\theequation}{\thesection.\arabic{equation}}
\renewcommand{\arraystretch}{1.35}
\newcommand{\T}{\mathcal{T}}
\renewcommand{\P}{\mathds{P}}
\newcommand{\R}{\mathds{R}}
\newcommand{\N}{\mathds{N}}
\renewcommand{\qed}{{\footnotesize$\blacksquare$}\normalsize}
\renewcommand{\phi}{\varphi}
\newcommand{\p}{\varphi}
\newcommand{\ip}{\varphi^{-1}}
\newcommand{\eps}{\varepsilon}
\newcommand{\fim}{\hfill\qed}
\newcommand{\md}{\mathrm{mod\,}}
\newcommand{\lm}{\lambda}
\renewcommand{\L}{\mathcal{L}}
\newcommand{\I}{\delta}
\renewcommand{\proof}{\noindent \textbf{\emph{Proof: }}}
\renewcommand{\cup}{\mbox{\footnotesize$\bigcup$}\,}
\renewcommand{\cap}{\mbox{\footnotesize$\bigcap$}\,}
\numberwithin{table}{section}
\numberwithin{figure}{section}
\thispagestyle{empty}
\vskip2cm
{\centering
\huge{\bf  Copulas Related to Manneville-Pomeau Processes}\vspace{.8cm}\\
\large{ {\bf S\'ilvia R.C. Lopes and Guilherme Pumi} \vspace{.1cm}\\
Mathematics Institute \\
Federal University of Rio Grande do Sul\vspace{.3cm}\\
This version: October, 10th, 2011\\
}
}
\vskip.6cm

\begin{abstract}
In this work we derive the copulas related to Manneville-Pomeau processes. We examine both bidimensional and multidimensional cases and derive some properties for the related copulas. Computational issues, approximations and random variate generation problems are also addressed and simple numerical experiments to test the approximations developed are also perform. In particular, we propose an approximation to the copula which  we show to converge uniformly to the true copula. To illustrate the usefulness of the theory, we derive a fast procedure to estimate the underlying parameter in  Manneville-Pomeau processes.
\vspace{.2cm}\\
\noindent \textbf{ Keywords.} Copulas; Manneville-Pomeau Processes; Invariant Measures; Parametric Estimation.
\end{abstract}


\normalsize
\section{Introduction}
\setcounter{equation}{0}
The statistics of stochastic processes derived from dynamical systems has seen a grown attention in the last decade or so (see Chazottes et al. (2005) and references therein). The relationship between copulas and areas such ergodic theory and dynamical systems also have seen some development, especially in the last few years (see, for instance, Koles\'arov\'a et al. (2008)). In this work our aim is to contribute with the area by identifying and studying the copulas related to random vectors coming from the so-called Manneville-Pomeau processes, which are obtained as iterations of the Man\-ne\-vil\-le-Pomeau transformation to a specific chosen random variable (see Definitions \ref{mpmap}  and \ref{mpp}). We cover both, bidimensional and $n$-dimensional cases, which share a lot more in common than one could expect.

The copulas derived here depend on a probability measure which has no closed formula. In order to minimize this deficiency, we propose an approximation to the copula which we show to converge uniformly to the true copula. The copula also depend on several functions which have to be approximated as well, so the approximation depends on several intermediate steps. The results related to the convergence of the proposed approximation  presented here are far more general than we need and actually allows one to change these intermediate approximations and still obtain the uniform convergence result for the approximated copula. We also address problems related to random variate generation of the copula and present the results of some simple numerical experiments in order to assess the stability and precision of the intermediate approximations. The usefulness of the theory is illustrated by a simple application to the problem of estimating the underlying parameter in Manneville-Pomeau processes.

The paper is organized as follows: in the next section, we briefly review some concepts and results on Manneville-Pomeau transformations and processes and on copulas. Section 3 is devoted to determine the copulas related to any pair $(X_t,X_{t+h})$ from a Manneville-Pomeau process and to explore some consequences. In Section 4, the multidimensional extensions are shown. In Section 5 an approximation to the copulas derived in Section 3 is proposed. This approximation, which is shown to converge uniformly to the true copula, is then applied to exploit some characteristics of the copulas related to Manneville-Pomeau process through statistical and graphical analysis. Some computational and random variate generation problems are also addressed. In Section 6 we illustrate the usefulness of the theory by deriving a fast procedure to estimate the underlying parameter in Manneville-Pomeau processes. Conclusions are reserved to Section 7.
\section{Some Background}
\setcounter{equation}{0}
\indent In this section we shall briefly review some basic results on Manneville-Pomeau transformations and related processes  as well as some concepts on copulas needed later. We start with the definition of the Manneville-Pomeau transformation.

        \begin{df}\label{mpmap}
        The map $T_s:[0,1]\longrightarrow[0,1]$, given by
        \small
        \[T_s(x)=x+x^{1+s}(\md1),\]
        \normalsize
        for $s>0$, is called the \emph{Manneville-Pomeau transformation} (MP \emph{transformation}, for short).
        \end{df}

In what follows, $\lambda$ shall denote the Lebesgue measure in $I:=[0,1]$ and the $k$-fold composition will be denoted, as usual, by $T_s^k=T_s\circ\cdots\circ T_s$. Figure 1 shows the plot of the MP transformation for the values of $s\in\{  0.5, 1, 10, 100\}$. The plots show the usual behavior of  the MP transformations: for any $s$, they are  increasing and differentiable functions by parts in $I$. Furthermore, for any $s>0$, the function $T_s^k$ will have exactly $2^k$ parts.
\begin{figure}[!ht]\label{1}
\centering
\mbox{
\includegraphics[width=0.22\textwidth]{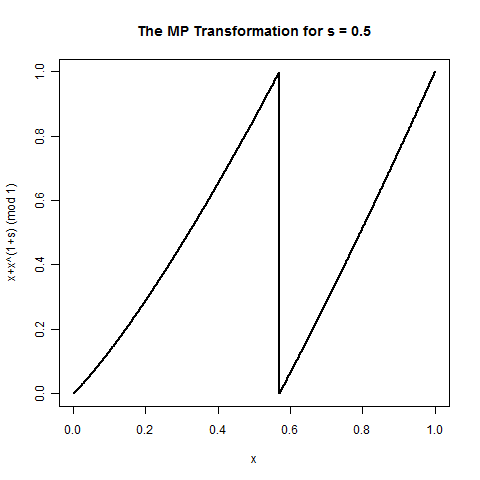}\hspace{-.1cm}
\includegraphics[width=0.22\textwidth]{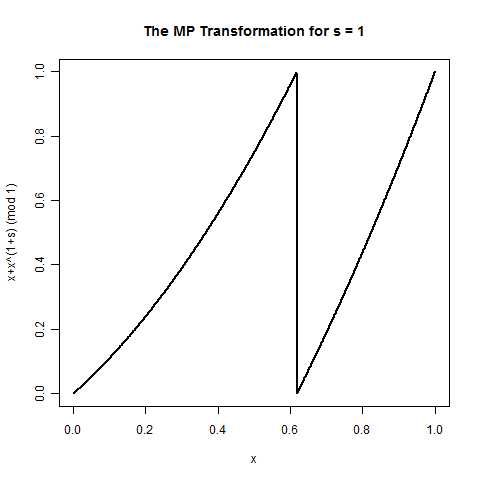}\hspace{-.1cm}
\includegraphics[width=0.22\textwidth]{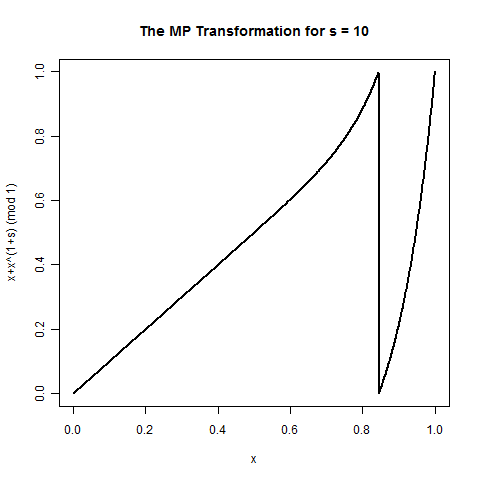}\hspace{-.1cm}
\includegraphics[width=0.22\textwidth]{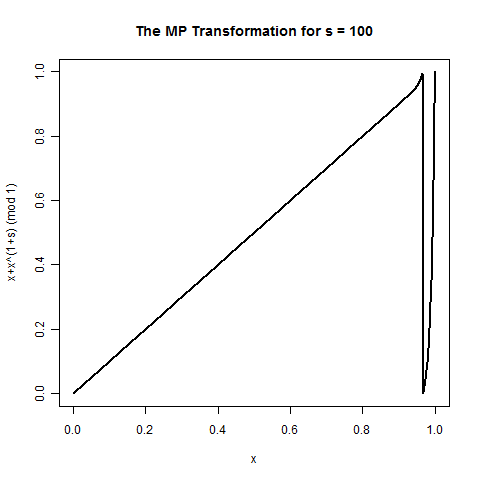}
  }
  \caption{Plot of the Manneville-Pomeau transformation for different values of $s\in\{0.5,1,10,100\}$. }\label{f5}
\end{figure}

Pianigiani (1980) shows the existence of a $T_s$-invariant and absolutely continuous measure with respect to the Lebesgue measure in $I$ which will be denoted henceforth by $\mu_s$. However, the proof uses Perron-Frobenius operator theory and is, for practical purposes, non-constructive so that an explicit form for a $T_s$-invariant measure is unknown. However, this measure will  be a Sinai-Bowen-Ruelle (SBR) measure in the sense that the weak convergence
\small
\begin{equation}\label{sbr}
\frac{1}{n}\sum_{k=0}^{n-1}\delta_{T_s^k(x)}(A)\longrightarrow \mu_s(A)
\end{equation}
\normalsize
holds for almost all $x\in I$ and all $\mu_s$-continuity sets\footnote{Recall that a set $A$ is a $\mu$-continuity set if $\mu(\partial A)=0$, where $\partial A$ denotes the boundary of $A$. The measure theoretical results applied here can be found, for instance, in Royden (1988). A good reference in weak convergence of probability measures is Billingsley (1999) and for ergodic theoretical related results, see Pollicott and Yuri (1998). } $A$, where $\delta_{a}(\cdot)$ is the Dirac measure at $a$.

As a dynamical system, the triple $(I,\mu_s,T_s)$ is exact (that is, $\lim_{k\rightarrow\infty}(\mu_s\circ T_s^k)(A)=1$, for all positive $\mu_s$-measurable sets $A$) which implies ergodicity and strong-mixing. When $s<1$, $\mu_s$ is a probability measure, while if $s\geq 1$, $\mu_s$ is no longer finite, but $\sigma$-finite (see Fisher and Lopes (2001)). Furthermore, it can be shown that $\mu_s$ has a positive, bounded continuous Radon-Nikodym derivative $\mathrm{d} \mu_s=h_s(x)\mathrm{d}x$, fact that will be useful later. For further details in the theory of MP transformations and related results, we refer to Pianigiani (1980), Young (1999), Maes et al. (2000) and Fisher and Lopes (2001). For applications, see Zebrowsky (2001), Olbermann et al. (2007) and Lopes and Lopes (1998).

        \begin{df}\label{mpp}
         Let $s\in(0,1)$ and let $U_0$ be a random variable  distributed according to (the probability measure) $\mu_s$.  Let $\phi:[0,1]\longrightarrow\R$ be a function in $\mathcal{L}^1(\mu_s)$. The stochastic process given by
         \small
          \[X_t=(\phi\circ T_s^t)(U_0),  \ \ \mbox{for all}\ \ t\in\N,\]
          \normalsize
        is called a \emph{Manneville-Pomeau process} (or MP \emph{process}, for short).
        \end{df}

The MP process, as defined above, is stationary since $\mu_s$ is a $T_s$-invariant measure and $\mu_s\ll\lambda$. It is also ergodic since $\mu_s$ is ergodic for $T_s$. 
By its turn, copulas are distribution functions whose marginals are uniformly distributed on $I$. The copula literature has grown enormously in the last decade, especially in terms of empirical applications and have become standard tools in financial data analysis (see Nelsen (2006) and references therein).  The next theorem, known as Sklar's theorem, is the key result for copulas and elucidates the role played by them. See Schweizer and Sklar (2005) for a proof.

        \begin{thm}[Sklar]Let $X_1,\cdots,X_n$ be random variables with mar\-gi\-nals $F_1,\cdots,F_n$, respectively, and joint distribution function $H$. Then, there exists a copula $C$ such that,
        \[H(x_1,\cdots,x_n)=C\big(F_1(x_1),\cdots,F_n(x_n)\big), \ \ \ \mbox{for all}\ \  (x_1,\cdots,x_n) \in \R^n.\]
        If the $F_i$'s are continuous, then $C$ is unique. Otherwise, $C$ is uniquely determined on
        $\mathrm{Ran}(F_1)\times\cdots\times\mathrm{Ran}(F_n)$. The converse also holds. Furthermore,
        \[C(u_1,\cdots,u_n)=H\big(F_1^{(-1)}(u_1),\cdots,F_n^{(-1)}(u_n)\big),\ \ \  \mbox{for all} \ \  (u_1,\cdots,u_n)\in I^n,\]
        where for a function $F$, $F^{(-1)}$ denotes its pseudo-inverse given by $F^{(-1)}(x):=\inf\big\{u\in \mathrm{Ran}(F): F(u)\geq x\big\}.$
        \end{thm}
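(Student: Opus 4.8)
\medskip
\noindent\textbf{Proof plan.} The plan is to dispose first of the case in which every $F_i$ is continuous, where an explicit copula is available, and then to reduce the general case to a deterministic extension problem for subcopulas. The converse direction is the trivial half and I would record it at once: if $C$ is any copula and $F_1,\dots,F_n$ are arbitrary univariate distribution functions, then $H:=C\circ(F_1,\dots,F_n)$ inherits groundedness, monotonicity and the $n$-increasing property from $C$, while letting every argument but the $i$-th tend to $+\infty$ returns $F_i$ because $C$ has uniform margins; hence $H$ is a joint distribution function with the prescribed marginals.

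For the continuous case the essential tool is the probability integral transform together with two elementary facts about the pseudo-inverse: $F\bigl(F^{(-1)}(u)\bigr)=u$ for every $u\in\mathrm{Ran}(F)$ when $F$ is continuous, and $\{F(X)\le u\}$ coincides with $\{X\le F^{(-1)}(u)\}$ up to a $\P$-null set. I would put $U_i:=F_i(X_i)$, note that each $U_i$ is uniform on $I$, and define $C$ as the joint distribution function of $(U_1,\dots,U_n)$, which is then a copula by construction. The representation follows from the one-line computation
\[
C\bigl(F_1(x_1),\dots,F_n(x_n)\bigr)=\P\bigl(F_1(X_1)\le F_1(x_1),\dots,F_n(X_n)\le F_n(x_n)\bigr)=H(x_1,\dots,x_n),
\]
and the inversion formula follows by substituting $x_i=F_i^{(-1)}(u_i)$ and cancelling. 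Uniqueness here is immediate since $\mathrm{Ran}(F_i)$ is dense in $I$ and every copula is (Lipschitz) continuous.

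In the general case I would first build a \emph{subcopula}: on $S:=\mathrm{Ran}(F_1)\times\cdots\times\mathrm{Ran}(F_n)$ set $C'\bigl(F_1(x_1),\dots,F_n(x_n)\bigr):=H(x_1,\dots,x_n)$. The one delicate point is that this is well defined, i.e.\ the value does not depend on the chosen preimages; this rests on the coordinatewise estimate $\bigl|H(\dots,x_i,\dots)-H(\dots,x_i',\dots)\bigr|\le|F_i(x_i)-F_i(x_i')|$, the one-dimensional Lipschitz/Fr\'echet--Hoeffding bound for distribution functions. Granting it, $C'$ inherits from $H$ all the defining properties of a subcopula, already satisfies the displayed identity and, after composition with the pseudo-inverses, the inversion formula on $S$; this also yields the claimed uniqueness of $C$ on $\mathrm{Ran}(F_1)\times\cdots\times\mathrm{Ran}(F_n)$.

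The remaining step, and the real obstacle, is the subcopula extension lemma: one must extend $C'$ from $S$ to a copula $C$ on all of $I^n$ without spoiling the $n$-increasing property. I would carry this out by multilinear interpolation --- pass to the closure $\overline S$, extending $C'$ there by continuity via the Lipschitz bound, and on each coordinate fill the gaps of $\overline{\mathrm{Ran}(F_i)}$ by affine interpolation between consecutive endpoints. Checking that the resulting $C$ is $n$-increasing is the crux: expressing the $C$-volume of an arbitrary box as a telescoping sum and subdividing the box at the interpolation nodes, each piece has volume equal to a product of length ratios times a nonnegative $C'$-volume of a box with corners in $\overline S$, hence is nonnegative; groundedness and uniform margins of $C$ are then automatic. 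This gives existence in general and, with the uniqueness-on-the-range statement above, the theorem. A slicker route to existence, worth noting, bypasses the interpolation lemma by the generalized distributional transform: enlarge the probability space by independent uniforms $V_i$, set $U_i:=F_i(X_i-)+V_i\bigl(F_i(X_i)-F_i(X_i-)\bigr)$, verify $U_i\sim\mathrm{Unif}(I)$ with $F_i^{(-1)}(U_i)=X_i$ almost surely, and take $C$ to be the law of $(U_1,\dots,U_n)$.
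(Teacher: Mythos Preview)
The paper does not supply its own proof of Sklar's theorem: it states the result and refers the reader to Schweizer and Sklar (2005). So there is nothing in the paper to compare your proposal against.

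That said, your outline is a correct and standard route to the theorem. The continuous case via the probability integral transform, the definition of a subcopula on $\mathrm{Ran}(F_1)\times\cdots\times\mathrm{Ran}(F_n)$ using the coordinatewise Lipschitz bound to ensure well-definedness, and the multilinear-interpolation extension lemma are exactly the ingredients of the classical proof in Schweizer and Sklar (and in Nelsen's book, also cited in the paper). Your alternative via the generalized distributional transform (the R\"uschendorf trick) is likewise well known and gives existence more cleanly, at the cost of enlarging the probability space. One small point: in your continuous-case computation the step $\{F_i(X_i)\le F_i(x_i)\}=\{X_i\le x_i\}$ up to a null set uses continuity of $F_i$ together with $\P(X_i\in F_i^{-1}(\{F_i(x_i)\}))=0$; you have this implicitly, but it is worth making the null-set accounting explicit since it is where discontinuous marginals would break the argument.
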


The next theorem, whose proof can be found, for instance, throughout Nelsen (2006), shall prove very useful in what follows. Except stated otherwise, the measure implicit to phrases like ``almost sure'', ``almost everywhere'' and so on will be the (appropriate) Lebesgue measure.

        \begin{thm}\label{ver}
        Let $X$ and $Y$ be continuous random variables with copula $C$. If $f$ is an almost everywhere decreasing function then $C_{f(X),Y}(u,v)=u-C_{X,Y}(u,1-v)$. Furthermore, if $f_1$ and $f_2$ are functions increasing almost everywhere, then $C_{f_1(X),f_2(Y)}(u,v)=C_{X,Y}(u,v)$.
        \end{thm}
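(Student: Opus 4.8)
The plan is to prove the two claimed copula identities directly from Sklar's theorem and the definition of the copula as the joint distribution function of the probability integral transforms, using only elementary properties of distribution functions together with the fact that, for a continuous random variable $X$ and an almost everywhere monotone $f$, the distribution function of $f(X)$ is easily expressed in terms of $F_X$. First I would fix continuous random variables $X$ and $Y$ with joint copula $C=C_{X,Y}$, so that by Sklar's theorem $H_{X,Y}(x,y)=C\big(F_X(x),F_Y(y)\big)$, and recall that the copula of any pair of continuous random variables $(V,W)$ satisfies $C_{V,W}(u,v)=\P\big(F_V(V)\le u,\ F_W(W)\le v\big)$.

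For the first assertion, let $f$ be decreasing almost everywhere, so that $Z:=f(X)$ is again a continuous random variable. The key computation is to relate $F_Z$ to $F_X$: since $f$ is (essentially) decreasing, the event $\{Z\le z\}$ is, up to a null set, of the form $\{X\ge f^{-1}(z)\}$, giving $F_Z(z)=1-F_X\big(f^{-1}(z)^{-}\big)$; because $X$ is continuous this is just $1-F_X\big(f^{-1}(z)\big)$, and more usefully, the random variable $F_Z(Z)$ equals $1-F_X(X)$ almost surely. Writing $U:=F_X(X)$ and $V:=F_Y(Y)$ (both uniform on $I$), we then get
\[
C_{f(X),Y}(u,v)=\P\big(1-U\le u,\ V\le v\big)=\P(U\ge 1-u,\ V\le v)=\P(V\le v)-\P(U< 1-u,\ V\le v).
\]
Since $U$ is continuous, $\P(U<1-u,V\le v)=\P(U\le 1-u,V\le v)=C_{X,Y}(1-u,v)$ is not quite what we want; instead I would use $\P(U\ge 1-u, V\le v)=\P(V\le v)-\P(U\le 1-u,V\le v)+\P(U=1-u,V\le v)$ and drop the last term by continuity, but it is cleaner to note $C_{X,Y}(u,1-v)+C_{f(X),Y}(u,v)$ telescopes: a direct way is to observe $C_{f(X),Y}(u,v)=\P(U>1-u)-\P(U>1-u,V>v)$ — no; the slickest route is the standard identity $\P(U\ge 1-u,V\le v)=v-C_{X,Y}(1-u,v)$ combined with $C_{X,Y}(1-u,v)=?$. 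Rather than chase this, the cleanest bookkeeping is: by inclusion–exclusion on the uniform pair, $\P(1-U\le u, V\le v)=\P(V\le v)-\P(1-U>u,V\le v)=v-\P(U<1-u,V\le v)$, and here one must instead pair $1-U\le u$ with $1-(1-v)$; carrying the substitution $v\mapsto 1-v$ through in the auxiliary variable and using right-continuity plus continuity of the marginals yields exactly $C_{f(X),Y}(u,v)=u-C_{X,Y}(u,1-v)$.

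For the second assertion, let $f_1,f_2$ be increasing almost everywhere. Then as above $F_{f_1(X)}\big(f_1(X)\big)=F_X(X)$ almost surely and $F_{f_2(Y)}\big(f_2(Y)\big)=F_Y(Y)$ almost surely, because for an essentially increasing $g$ the event $\{g(X)\le z\}$ coincides up to a null set with $\{X\le g^{-1}(z)\}$, so the probability integral transform is unchanged. Hence $C_{f_1(X),f_2(Y)}(u,v)=\P\big(F_X(X)\le u,\ F_Y(Y)\le v\big)=C_{X,Y}(u,v)$, which is the claim. I would remark that the first identity can be re-derived from the second by noting that $u-C_{X,Y}(u,1-v)$ is itself the copula of $(X,-Y)$ type transformations, but the self-contained argument above is preferable.

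The main obstacle is purely technical: justifying the passage from ``$f$ decreasing almost everywhere'' to the relation $F_{f(X)}(z)=1-F_X\big(f^{-1}(z)\big)$ and, correspondingly, $F_{f(X)}(f(X))=1-F_X(X)$ a.s., since $f$ need not be strictly monotone nor a genuine bijection, and one must argue that the exceptional Lebesgue-null set where monotonicity fails carries no $\P_X$-mass — this uses that $X$ is continuous (so $\P_X\ll\lambda$ is not automatic, but $F_X$ continuous suffices to kill atoms) — and that endpoint/boundary effects and the distinction between $F_X(t)$ and $F_X(t^-)$ wash out by continuity of the marginals. Once that lemma-level fact about monotone transformations of continuous random variables is in hand, both displayed identities follow by a two-line computation with the uniform pair $(F_X(X),F_Y(Y))$ and elementary inclusion–exclusion; I would present the monotone-transformation fact first as a short observation and then conclude.
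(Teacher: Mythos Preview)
The paper does not give its own proof of this statement; it simply says the proof ``can be found, for instance, throughout Nelsen (2006)''. Your overall strategy --- pass to the uniform pair $(U,V)=(F_X(X),F_Y(Y))$ via the probability integral transform, observe that for a decreasing $f$ one has $F_{f(X)}(f(X))=1-U$ almost surely, and then compute by inclusion--exclusion --- is exactly the standard argument found there, and your treatment of the increasing case (the second assertion) is clean and correct.

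There is, however, a genuine gap in your handling of the decreasing case. Your own computation
\[
C_{f(X),Y}(u,v)=\P(1-U\le u,\ V\le v)=v-\P(U<1-u,\ V\le v)=v-C_{X,Y}(1-u,v)
\]
is correct, and this \emph{is} the right identity when the decreasing transformation acts on the first coordinate (Nelsen, Theorem~2.4.4). It does not coincide with $u-C_{X,Y}(u,1-v)$ in general: the two expressions agree only when $C_{X,Y}$ equals its own survival copula (radial symmetry), since subtracting them gives $C(u,1-v)-C(1-u,v)-(u-v)$. The formula $u-C_{X,Y}(u,1-v)$ is the copula of $(X,g(Y))$ with $g$ decreasing, i.e.\ the transformation applied to the \emph{second} coordinate. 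You notice the mismatch (``not quite what we want'') but then try to close it with an unspecified ``substitution $v\mapsto 1-v$ \dots\ yields exactly $u-C_{X,Y}(u,1-v)$'', which is not an argument and cannot be made into one. The statement as printed appears to have the coordinate on which $f$ acts mislabeled; you should trust your computation and record the discrepancy rather than hand-wave toward an identity that does not hold.
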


For an introduction to copulas, we refer the reader to Nelsen (2006). For more details and extensions to the multivariate case with emphasis in modeling and dependence concepts, see Joe (1997). The theory of copulas is also intimately related to the theory of probabilistic metric spaces, see Schweizer and Sklar (2005) for more details in this matter.

\section{ Copulas and MP Processes: Bidimensional Case}
\setcounter{equation}{0}

In this section we shall investigate the bidimensional copulas associated to pairs of random variables coming from MP processes which we shall call MP \emph{copulas}.  As we will see later, the multidimensional case is very similar to the bidimensional case, so we shall give special attention to the latter.

First, let $\{X_n\}_{n\in\N}$ be an MP process with parameter $s\in(0,1)$ and $\phi\in\L^1(\mu_s)$ be an increasing almost everywhere function. Throughout this section and in the rest of the paper, we shall treat $s\in(0,1)$ as a given fixed number. Let
\small
\[F_0(x):=\P(U_0\leq x)=\mu_s\big([0,x]\big).\]
\normalsize
Since $\mu_s\ll\lambda$, $\mu_s$ is non-atomic and, therefore, $F_0$ will be (uniformly) continuous. The existence of a positive Radon-Nikodym density for $\mu_s$ also shows that $F_0$ will be increasing and its inverse will be well defined.  Let $F_t$ be the distribution function of $T^t_s(U_0)$, for all $t\in\N$.  For $x\in I$, notice that
\small
\begin{equation}\label{ft}
F_t(x):=\P\big(T_s^t( U_0)\leq x\big)=\mu_s\big((T_s^t)^{-1}\big([0,x]\big)\big)=\mu_s\big([0,x]\big)=F_0(x),
\end{equation}
\normalsize
since $\mu_s$ is a $T_s$-invariant measure.

In what follows, we shall need the solution for the inequality $T^t_s(X)\leq y$, $y\in(0,1)$, in $X$, for $X$ a random variable taking values in $I$. Now, since each of the $2^t$ parts of $T_s^t$ is one-to-one in its domain, the inverse of $T_s^t$ will also be continuous by parts and each part will also be a one-to-one function in its domain. Let $0=a_{t,0},\cdots,a_{t,2^t}=1$ be the end points of each part of $T_s^t$. We shall call each interval $[a_{t,k},a_{t,k+1})$ a \emph{node} of $T_s^t$, for $k=0,\cdots,2^t-1$ and $t>0$. The (piecewise) inverse of $T_s^t$ can be conveniently written as
\small
\begin{eqnarray}\label{ss}
(T^t_s)^{-1}:I&\longrightarrow& I^{2^t}\nonumber\\
y&\longmapsto&\big(\T_{t,0}(y),\cdots,\T_{t,2^t-1}(y)\big),
\end{eqnarray}
\normalsize
where $\T_{t,k}(y)$ denotes the inverse of $T_s^t$ restricted to its $k$-th node, for all $k\in\{0,\cdots,2^t-1\}$.
Notice that both $\mathcal{T}_{t,k}$ and $a_{t,k}$ depend on $s$ for each $k$, but since no confusion will arise, and for the sake of simplicity, we shall omit this dependence from the notation as we shall do in several other occasions. Now, the solution of the inequality $T_s^t(X)\leq y$ in $X$ can be determined and is given by $X\in A_{t,0}(y)\,\cup\cdots\cup A_{t,2^t-1}(y)$, where
\small
\begin{equation}\label{ais}
A_{t,k}(y)=\big[a_{t,k},\mathcal{T}_{t,k}(y)\big],
\end{equation}
\normalsize
which will be a proper closed subinterval of $[a_{t,k},a_{t,k+1})$, for each $k=0,\cdots, 2^t-1$. Notice that $A_{t,k}(y)$ (whose dependence on $s$ was omitted from the notation) is just the inverse image of $[0,y]$ by the transformation $T^t_s$ restricted to the node $[a_{t,k},a_{t,k+1})$. We can now use this result to prove the following useful lemma.

    \begin{lema}\label{aea}
    Let   $X$ be a random variable taking values in $I$ and let $T_s$ be the \emph{MP} transformation with parameter $s>0$. Then, for any $t\in\N$ and $x\in I$,
    \small
    \[\P\big(T^t_s(X)\leq x\big) = \P\big(X\in \cup_{k=0}^{2^t-1}A_{t,k}(x)\big)=\sum_{k=0}^{2^t-1}\P\big(X\in A_{t,k}(x)\big),\]
    \normalsize
    where $A_{t,k}$'s are given by \eqref{ais}.
    \end{lema}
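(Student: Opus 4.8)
The plan is to verify the two claimed equalities directly from the description of the preimage $(T_s^t)^{-1}([0,x])$ developed just before the lemma. The first equality is essentially tautological: since each of the $2^t$ pieces of $T_s^t$ is strictly increasing and one-to-one on its node $[a_{t,k},a_{t,k+1})$, the set of $X\in I$ satisfying $T_s^t(X)\le x$ is precisely the union, over $k=0,\dots,2^t-1$, of the points in the $k$-th node mapped into $[0,x]$, and that set was identified in \eqref{ais} as $A_{t,k}(x)=[a_{t,k},\mathcal{T}_{t,k}(x)]$. So $\{T_s^t(X)\le x\}=\{X\in\cup_{k=0}^{2^t-1}A_{t,k}(x)\}$ as events, and the first equality follows by applying $\P$.

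For the second equality I would invoke finite additivity of $\P$, which requires checking that the events $\{X\in A_{t,k}(x)\}$, $k=0,\dots,2^t-1$, are pairwise disjoint — equivalently that the sets $A_{t,k}(x)$ are pairwise disjoint. Here I would note that $A_{t,k}(x)\subseteq[a_{t,k},a_{t,k+1})$ for each $k$ (as remarked in the text, $A_{t,k}(x)$ is a proper closed subinterval of the $k$-th node), and the nodes $[a_{t,k},a_{t,k+1})$ are pairwise disjoint by construction of the endpoints $0=a_{t,0}<\cdots<a_{t,2^t}=1$. Hence the $A_{t,k}(x)$ are pairwise disjoint, so $\P(X\in\cup_k A_{t,k}(x))=\sum_k\P(X\in A_{t,k}(x))$, completing the proof.

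The only mildly delicate point — and the closest thing to an obstacle — is the boundary case $x=0$ (and, symmetrically, making sure the description is valid for $x=1$): when $x=0$ one should check that $\mathcal{T}_{t,k}(0)=a_{t,k}$ so that $A_{t,k}(0)=\{a_{t,k}\}$ is a single point and the formula still reads correctly, and when $x$ equals a value of $T_s^t$ at an interior endpoint one should confirm the closed/half-open conventions in \eqref{ais} do not cause a point to be double-counted; since consecutive nodes are half-open on the right, no overlap occurs. None of this affects the argument, but I would state it explicitly. I do not anticipate needing any property of $\mu_s$ or of $X$ beyond its taking values in $I$, which matches the hypothesis. \fim
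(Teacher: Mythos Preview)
Your proposal is correct and follows essentially the same approach as the paper: the paper's own proof simply says the result follows from the preceding discussion of the preimage $(T_s^t)^{-1}([0,x])$ and from the fact that the intervals $A_{t,k}$ are pairwise disjoint. Your write-up is just a more explicit version of this, with the additional boundary-case checks being harmless extra care.
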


\begin{proof}
The result follows easily from what was just discussed and from the fact that the intervals $A_{t,k}$'s are (pairwise) disjoints.\fim
\end{proof}

As for the copulas related to MP processes, in view of the stationarity of the MP process, the following result follows easily.

        \begin{prop}\label{invar}
        Let $\{X_n\}_{n\in\N}$ be an \emph{MP} process with parameter $s\in(0,1)$ and $\phi\in\L^1(\mu_s)$ be an almost everywhere increasing function. Then, for any $t, h\in\N$,
        \[C_{X_t,X_{t+h}}(u,v)=C_{X_0,X_h}(u,v),\]
        everywhere in $ I^2$.
        \end{prop}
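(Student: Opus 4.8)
The plan is to reduce the statement to the fact that the pairs $\big(X_t,X_{t+h}\big)$ and $\big(X_0,X_h\big)$ share the same joint distribution function, and then to combine Sklar's theorem with the second part of Theorem \ref{ver}. The stationarity of the MP process, already noted above, is the only real ingredient; everything else is bookkeeping about how copulas behave under monotone transformations of the marginals.

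First I would write $X_{t+h}=(\phi\circ T_s^{t+h})(U_0)=(\phi\circ T_s^h)\big(T_s^t(U_0)\big)$ and set $V:=T_s^t(U_0)$. Since $\mu_s$ is $T_s$-invariant, an immediate induction gives that $V$ is again distributed according to $\mu_s$, so $V$ and $U_0$ have the same law. Consequently the pair $\big(V,T_s^h(V)\big)$, being the image of $V$ under the deterministic map $v\mapsto\big(v,T_s^h(v)\big)$, has the same joint distribution as $\big(U_0,T_s^h(U_0)\big)$; in other words $\big(T_s^t(U_0),T_s^{t+h}(U_0)\big)$ and $\big(U_0,T_s^h(U_0)\big)$ are equal in distribution. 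Next, because $\mu_s\ll\lambda$, the common law $\mu_s$ is non-atomic, so all four random variables $T_s^t(U_0)$, $T_s^{t+h}(U_0)$, $U_0$ and $T_s^h(U_0)$ are continuous; hence by Sklar's theorem the copulas $C_{T_s^t(U_0),\,T_s^{t+h}(U_0)}$ and $C_{U_0,\,T_s^h(U_0)}$ are each uniquely determined on all of $I^2$, and since the two joint distribution functions coincide they must agree everywhere in $I^2$. Finally, $\phi$ is increasing almost everywhere, so applying the second part of Theorem \ref{ver} with $f_1=f_2=\phi$ gives $C_{X_t,X_{t+h}}=C_{T_s^t(U_0),\,T_s^{t+h}(U_0)}$ and, likewise, $C_{X_0,X_h}=C_{U_0,\,T_s^h(U_0)}$. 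Chaining these three equalities yields the claim.

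The only point requiring a little care — and the reason I route the argument through Theorem \ref{ver} rather than invoking Sklar directly on $(X_t,X_{t+h})$ — is that $\phi$ is assumed only almost everywhere increasing, not continuous or strictly monotone, so $X_t$ itself may carry a discrete component and its copula need not be unique off the range of its marginal. Passing to the genuinely continuous variables $T_s^k(U_0)$ removes this ambiguity, and it is at that level that $T_s$-invariance of $\mu_s$ does the actual work; Theorem \ref{ver} then simply transfers the conclusion back to the $\phi$-transformed process. I do not expect any genuine obstacle beyond keeping this reduction straight.
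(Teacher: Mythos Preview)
Your proof is correct and uses essentially the same ingredients as the paper --- stationarity of the process plus Sklar's theorem --- but in reverse order: the paper applies Sklar directly to $(X_t,X_{t+h})$ and only afterward (in Corollary~\ref{coraux}) deduces the statement for $\big(T_s^t(U_0),T_s^{t+h}(U_0)\big)$, whereas you first establish it at the level of the continuous variables $T_s^k(U_0)$ and then transfer via Theorem~\ref{ver}. Your ordering is arguably cleaner, since it sidesteps the uniqueness issue for the copula of $X_t$ that you correctly flag in your final paragraph.
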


\begin{proof}
As consequence of the stationarity of $\{X_t\}_{t\in\N}$, if we let the joint distribution of the pair $(X_p,X_q)$ for any $p,q\in\N$, $p\neq q$, be denoted by $\widetilde{H}_{p,q}(\cdot,\cdot)$, it follows that for all $x,y\in (0,1)$, $t\in\N$ and $h\in\N^\ast:=\N\backslash\{0\}$, $\widetilde{H}_{t,t+h}(x,y)=\widetilde{H}_{0,h}(x,y)$. Now, upon applying Sklar's Theorem and \eqref{ft}, it follows that
\small
\[C_{X_t,X_{t+h}}(u,v)\!=\!\widetilde{H}_{t,t+h}\big(F_t^{-1}(u),F_{t+h}^{-1}(v)\big)\!=\!\widetilde{H}_{0,h}\big(F_0^{-1}(u),F_{h}^{-1}(v)\big) \!=\! C_{X_0,X_h}(u,v),\]
\normalsize
for all $(u,v)\in I^2$.\fim
\end{proof}

        \begin{cor}\label{coraux}
        Let $T_s$ be the \emph{MP} transformation for some $s\in(0,1)$, $\mu_s$ be a $T_s$-invariant probability measure and let $U_0$ be distributed as $\mu_s$. Then, for any $t,h\in\N$, $h\neq0$,
        \[C_{T^t_s(U_0),T^{t+h}_s(U_0)}(u,v)=C_{U_0,T^h_s(U_0)}(u,v)\]
        everywhere in $I^2$.
        \end{cor}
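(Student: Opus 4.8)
The plan is to read the corollary as the special case of Proposition \ref{invar} obtained by choosing $\phi$ to be the identity map on $I$. First I would check that $\phi:=\mathrm{id}_{[0,1]}$ meets the hypotheses of Definition \ref{mpp} and of Proposition \ref{invar}: it is strictly increasing, hence increasing almost everywhere, and, being bounded on $I$ with $\mu_s$ a probability measure (this is where $s\in(0,1)$ is used), it lies in $\L^1(\mu_s)$. Consequently the process $X_t:=(\phi\circ T_s^t)(U_0)=T_s^t(U_0)$, $t\in\N$, is a genuine MP process with parameter $s$, and in particular $X_0=T_s^0(U_0)=U_0$.

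With this identification in hand, the second step is immediate: applying Proposition \ref{invar} to the process $\{X_t\}_{t\in\N}$ gives, for every $t,h\in\N$ with $h\neq 0$, the equality $C_{X_t,X_{t+h}}(u,v)=C_{X_0,X_h}(u,v)$ everywhere on $I^2$, which upon substituting $X_t=T_s^t(U_0)$ and $X_0=U_0$ is precisely the asserted identity $C_{T_s^t(U_0),T_s^{t+h}(U_0)}(u,v)=C_{U_0,T_s^h(U_0)}(u,v)$.

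I do not expect any real obstacle here beyond the routine verification that the identity is an admissible choice of $\phi$; the content is entirely inherited from Proposition \ref{invar}. If a self-contained argument were preferred, one could instead repeat that proof verbatim: the stationarity of $\{T_s^t(U_0)\}_{t\in\N}$, which follows from the $T_s$-invariance of $\mu_s$ together with $\mu_s\ll\lambda$, yields $\widetilde H_{t,t+h}=\widetilde H_{0,h}$ for the joint distribution functions, and then Sklar's Theorem combined with \eqref{ft} (which gives $F_t=F_0$ for all $t$) produces the equality of the associated copulas.
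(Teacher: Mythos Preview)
Your proof is correct and essentially follows the paper's own one-line argument, which reads ``Immediate from Theorem \ref{ver} applied to Proposition \ref{invar}.'' The only cosmetic difference is that you specialize $\phi$ to the identity and invoke Proposition \ref{invar} directly, whereas the paper keeps a generic increasing $\phi$ and then strips it off via Theorem \ref{ver}; your route is marginally more economical since it bypasses Theorem \ref{ver} altogether.
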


\begin{proof}
Immediate from Theorem \ref{ver} applied to Proposition \ref{invar}.\fim
\end{proof}
Now we turn our attention to determine the copula associated to any pair of random variables $(X_p,X_q)$, $p,q\in\N$, obtained from an MP process with $\p$ increasing almost everywhere. For the sake of simplicity, let us introduce the following functions: let $h$ be a positive integer and for $k=0,\cdots,2^h-1$, let \small$\mathcal{F}_{h,k}:I\rightarrow\big[F_0(a_{h,k}),F_0(a_{h,k+1})\big]$\normalsize be given by
\small
\[\mathcal{F}_{h,k}(x):=F_0\big(\T_{h,k}\big( F^{-1}_0(x)\big)\big).\]
\normalsize
Notice that for each $k$, $\mathcal{F}_{h,k}(0)=F_0(a_{h,k})$ and $\mathcal{F}_{h,k}(1)=F_0(a_{h,k+1})$ and $\mathcal{F}_{h,k}$ is a one to one, increasing and uniformly continuous function.

        \begin{prop}\label{copcr}
        Let $\{X_n\}_{n\in\N}$ be an \emph{MP} process with parameter $s\in(0,1)$, $\phi\in\L^1(\mu_s)$ be an increasing almost everywhere function and let $F_0$ be the distribution function of $U_0$. Then, for any $t,h\in\N$, $h\neq 0$ and $(u,v)\in I^2$,
        \small
        \begin{equation}\label{cop}
        C_{X_t,X_{t+h}}(u,v)=\left(\sum_{k=0}^{n_0-1}\mathcal{F}_{h,k}(v)-F_0(a_{h,k})\!\right)\!\delta_{\N^\ast}\!(n_0)+\min\!\big\{u,\mathcal{F}_{h,n_0}(v)\!\big\} -F_0(a_{h,n_0}),
        \end{equation}
        \normalsize
        where $\delta_{\N^\ast}\!(x)$ equals 1, if $x\in\N^\ast$ and 0, otherwise, $\{a_{h,k}\}_{k=0}^{2^h}$ are the end points of the nodes of $T^h_s$ and $n_0:=n_0(u;h)=\big\{k:u\in\big[F_0(a_{h,k}),F_0(a_{h,k+1})\big)\big\}\in\{0,\cdots,2^h-1\}$.
        \end{prop}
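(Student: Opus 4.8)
By Proposition~\ref{invar} it suffices to compute $C_{X_0,X_h}(u,v)$, and by Theorem~\ref{ver} (the increasing part) we may replace $\p$ by the identity, so the task reduces to computing the copula of the pair $\big(U_0,T_s^h(U_0)\big)$. The strategy is to write down the joint distribution function $\widetilde H_{0,h}(x,y)=\P\big(U_0\le x,\ T_s^h(U_0)\le y\big)$ explicitly using Lemma~\ref{aea}, and then feed it into the inversion formula from Sklar's Theorem, $C(u,v)=\widetilde H_{0,h}\big(F_0^{-1}(u),F_0^{-1}(v)\big)$, recalling from \eqref{ft} that both marginals equal $F_0$.

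**Key steps.** First I would fix $x=F_0^{-1}(u)$ and $y=F_0^{-1}(v)$ and observe, via Lemma~\ref{aea}, that
\[
\P\big(U_0\le x,\ T_s^h(U_0)\le y\big)=\P\Big(U_0\in[0,x]\cap\textstyle\cup_{k=0}^{2^h-1}A_{h,k}(y)\Big)=\sum_{k=0}^{2^h-1}\P\big(U_0\in[0,x]\cap A_{h,k}(y)\big),
\]
the last equality because the $A_{h,k}(y)=[a_{h,k},\T_{h,k}(y)]$ are pairwise disjoint (each sitting inside its own node $[a_{h,k},a_{h,k+1})$). Next I would identify the index $n_0=n_0(u;h)$ for which $x=F_0^{-1}(u)\in[a_{h,n_0},a_{h,n_0+1})$; this is exactly the $n_0$ in the statement once one translates the condition $u\in[F_0(a_{h,k}),F_0(a_{h,k+1}))$ through the increasing bijection $F_0$. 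Then the intersection $[0,x]\cap A_{h,k}(y)$ is handled by three cases: for $k<n_0$ the whole node lies left of $x$, so $[0,x]\cap A_{h,k}(y)=A_{h,k}(y)=[a_{h,k},\T_{h,k}(y)]$ and its $\mu_s$-measure is $F_0\big(\T_{h,k}(y)\big)-F_0(a_{h,k})=\mathcal F_{h,k}(v)-F_0(a_{h,k})$; for $k>n_0$ the node lies entirely to the right of $x$, contributing $0$; and for $k=n_0$ the interval is $[a_{h,n_0},\min\{x,\T_{h,n_0}(y)\}]$, of $\mu_s$-measure $F_0\big(\min\{x,\T_{h,n_0}(y)\}\big)-F_0(a_{h,n_0})=\min\{u,\mathcal F_{h,n_0}(v)\}-F_0(a_{h,n_0})$, using that $F_0$ is increasing and continuous so it commutes with $\min$ and $F_0(x)=u$. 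Summing the three contributions and attaching the indicator $\delta_{\N^\ast}(n_0)$ to kill the (empty) sum over $k<n_0$ when $n_0=0$ gives exactly \eqref{cop}. A final remark is that the identity must be checked to hold everywhere on $I^2$, including the boundary $u,v\in\{0,1\}$; there one argues directly that both sides reduce to the correct marginal values, and that $n_0$ and the $\mathcal F_{h,k}$ are defined consistently at the endpoints (using $\mathcal F_{h,k}(0)=F_0(a_{h,k})$, $\mathcal F_{h,k}(1)=F_0(a_{h,k+1})$).

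**Main obstacle.** The routine part is the case analysis on $k$ versus $n_0$; the delicate part is the bookkeeping at node boundaries and the behaviour of the pseudo-inverse $F_0^{-1}$. One must be sure that $F_0^{-1}(u)$ lands in the half-open node $[a_{h,n_0},a_{h,n_0+1})$ consistently with the half-open definition of $n_0$, that $\mu_s$ puts no mass on the single points $a_{h,k}$ (true since $\mu_s\ll\lambda$), and that the replacement of $\p$ by the identity via Theorem~\ref{ver} is legitimate even though $\p$ is only increasing \emph{almost everywhere} — this is already absorbed into the cited theorem, so no extra work is needed, but it is the conceptual step that makes the reduction to $T_s^h(U_0)$ valid. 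Once these boundary and measure-zero issues are dispatched, the formula drops out of the disjointness in Lemma~\ref{aea} and the monotonicity of $F_0$.
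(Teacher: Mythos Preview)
Your proposal is correct and follows essentially the same route as the paper: reduce to the pair $\big(U_0,T_s^h(U_0)\big)$ via Proposition~\ref{invar} and Theorem~\ref{ver}, decompose the event $\{T_s^h(U_0)\le y\}$ into the disjoint intervals $A_{h,k}(y)$ via Lemma~\ref{aea}, split the sum according to the node index $n_0$ (the paper calls it $n_1$ before substituting $x=F_0^{-1}(u)$), and then apply Sklar's inversion with $F_h=F_0$. The only cosmetic difference is that you substitute $x=F_0^{-1}(u)$, $y=F_0^{-1}(v)$ at the outset, whereas the paper computes $H_{0,h}(x,y)$ for generic $x,y$ first and substitutes at the end; your additional remarks on boundary behaviour and the non-atomicity of $\mu_s$ are more explicit than the paper's treatment but change nothing in the argument.
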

\begin{proof}
By Propositions \ref{invar} and \ref{ver}, it suffices to derive the copula of the pair  $\big(U_0,T_s^h(U_0)\big)$. So let again $\{X_n\}_{n\in\N}$ be an MP process with parameter $s\in(0,1)$ and $\phi\in\L^1(\mu_s)$ be an increasing almost everywhere function and let $H_{0,h}(\cdot,\cdot)$ denote the distribution function of the pair $\big(U_0,T^h_s(U_0)\big)$. Notice that
\small
 \begin{align*}
H_{0,h}(x,y)=&\P(U_0\leq x, T^h_s(U_0)\leq y)=\P\big(U_0\leq x, U_0\in\cup_{k=0}^{2^h-1}A_{h,k}(y)\big)\nonumber\\
&=\P\big(U_0\in[0,x]\cap\cup_{k=0}^{2^h-1}A_{h,k}(y)\big)=\P\big(U_0\in\cup_{k=0}^{2^h-1}\big[[0,x]\cap A_{h,k}(y)\big]\big)\\
&=\sum_{k=0}^{2^h-1}\P\big(U_0\in[0,x]\cap A_{h,k}(y)\big),
\end{align*}
\normalsize
for any $x,y\in(0,1)$. Now let $n_1:=n_1(x;h)=\big\{k:x\in[a_{h,k},a_{h,k+1})\big\}\in\{0,\cdots, 2^h-1\}$ and assume for the moment that $n_1\geq 1$. Since $A_{h,k}(y)=\big[a_{h,k},\T_{h,k}(y)\big]$, it follows
\small
\begin{align*}
H_{0,h}(x,y)&=\sum_{k=0}^{n_1-1}\P\big(U_0\in A_{h,k}(y)\big)+\P\big(U_0\in A_{h,n_1}(y)\cap[a_{h,n_1},x]\big)\\
&=\sum_{k=0}^{n_1-1} \mu_s\big(A_{h,k}(y)\big)+\mu_s\big(\big[a_{h,n_1},\T_{h,n_1}(y)\big]\cap[a_{h,n_1},x]\big)\\
&=\sum_{k=0}^{n_1-1} \mu_s\big(\big[a_{h,k},\T_{h,k}(y)\big]\big)+\mu_s\big(\big[a_{h,n_1}, \min\{x,\T_{h,n_1}(y)\}\big]\big),
\end{align*}
\normalsize
which can be written, since $F_0(x)=\mu_s([0,x])$ is increasing, as
\small
\[H_{0,h}(x,y)=\sum_{k=0}^{n_1-1}\left[F_0\big(\T_{h,k}(y)\big)-F_0(a_{h,k})\right] +\min\big\{F_0(x),F_0\big(\T_{h,n_1}(y)\big)\big\}-F_0(a_{h,n_1}).\]
\normalsize
If $n_1=0$, the summation is absent of the formula and we have
\small
\[H_{0,h}(x,y)=\min\big\{F_0(x),F_0\big(\T_{h,0}(y)\big)\big\}-F_0(a_{h,0}),\]
\normalsize
so that, in any case, we have
\small
\[H_{0,h}(x,y)=\left(\sum_{k=0}^{n_1-1}\left[F_0\big(\T_{h,k}(y)\big)-F_0(a_{h,k})\right]\! \right)\!\delta_{\N^\ast}\!(n_1) + \min\big\{F_0(x),F_0\big(\T_{h,n_1}(y)\big)\big\} -F_0(a_{h,n_1}).\]
\normalsize
Now upon applying Sklar's Theorem, it follows that
\small
\begin{align*}
C_{U_0,T_s^h(U_0)}(u,v)&=H_{0,h}\big(F^{-1}_0(u),F^{-1}_h(v)\big)=H_{0,h}\big(F^{-1}_0(u),F^{-1}_0(v)\big)\\
&=\left(\sum_{k=0}^{n_0-1}\mathcal{F}_{h,k}(v)-F_0(a_{h,k})\!\right)\delta_{\N^\ast}(n_0)+ \min\big\{u,\mathcal{F}_{h,n_0}(v)\big\} -F_0(a_{h,n_0}),
\end{align*}
\normalsize
where $n_0:=n_0(u;h)=n_1\big(F_0^{-1}(u);h\big)=\big\{k:u\in\big[F_0(a_{h,k}),F_0(a_{h,k+1})\big)\big\}$. The result now follows from Proposition \ref{invar}.\fim
\end{proof}

        \begin{rmk}
        Notice that the copula \eqref{cop} can be expressed in terms of $\mu_s$ as
        \small
        \begin{eqnarray}\label{copmu}
        C_{X_t,X_{t+h}}(u,v)\!&=&\!\!\left(\sum_{k=0}^{n_0-1}\mu_s\Big(\big[a_{h,k},\T_{h,k}\big( F^{-1}_0(v)\big)\big]\Big)\!\right)\!\delta_{\N^\ast}\!(n_0) +\hphantom{60pt}\nonumber\\
        &&\hspace{0.9cm}+\,\mu_s\Big(\big[a_{h,n_0},\min\big\{F_0^{-1}(u),\T_{h,n_0}\big( F^{-1}_0(v)\big)\big\}\big]\Big),
        \end{eqnarray}
        \normalsize
        which will prove useful in Section 5. Also, expression \eqref{copmu} is helpful if one desires to verify directly that the marginals of \eqref{cop} are indeed uniform.
        \end{rmk}

\indent In the next proposition we address the case where $\p$ is an almost everywhere decreasing function. In view of Theorem \ref{ver}, one could, at first glance, think that a result like $C_{X_0,X_h}=C_{X_t,X_{t+h}}$ would not hold anymore, but in fact it still does, as it is shown in the next proposition.

        \begin{prop}\label{copdec}
        Let $\{X_n\}_{n\in\N}$ be an \emph{MP} process with parameter $s\in(0,1)$, $\phi\in\L^1(\mu_s)$ be an almost everywhere decreasing function and let $F_0$ be the distribution function of $U_0$. Then, $C_{X_0,X_h}(u,v)=C_{X_t,X_{t+h}}(u,v)$ everywhere in $I^2$ and, for any $t,h\in\N$ and $h\neq0$, \small
        \begin{eqnarray}\label{copb}
        C_{X_t,X_{t+h}}(u,v)&=&u+v-1+\left(\sum_{k=0}^{n_0}\left[\mathcal{F}_{h,k}(1-v)-F_0(a_{h,k})\right]\right)\!\delta_{\N^\ast}\!(n_0) +\hphantom{60pt}\nonumber\\
       &&\hspace{1cm}+\,\min\big\{1-u,\mathcal{F}_{h,n_0}(1-v)\big\}-F_0(a_{h,n_0}),
        \end{eqnarray}
        \normalsize
        for all $(u,v)\in I^2$, where $\{a_{h,k}\}_{k=0}^{2^h}$ are the end points of the nodes of $T^h_s$ and $n_0:=n_0(u;h)=\big\{k:u\in\big(1-F_0(a_{h,k+1}),1-F_0(a_{h,k})\big]\big\}$.
        \end{prop}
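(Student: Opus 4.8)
The plan is to reduce the decreasing case to the copula of the pair $\big(U_0,T_s^h(U_0)\big)$, which was already computed inside the proof of Proposition~\ref{copcr}, by two applications of the decreasing-function part of Theorem~\ref{ver}; this will simultaneously give the explicit formula \eqref{copb} and the shift-invariance $C_{X_0,X_h}=C_{X_t,X_{t+h}}$. First I would write $C_{X_t,X_{t+h}}(u,v)=C_{\phi(T_s^t(U_0)),\,\phi(T_s^{t+h}(U_0))}(u,v)$ and strip off the two copies of $\phi$. Applying Theorem~\ref{ver} to the first coordinate gives
\[
C_{\phi(T_s^t(U_0)),\,\phi(T_s^{t+h}(U_0))}(u,v)=u-C_{T_s^t(U_0),\,\phi(T_s^{t+h}(U_0))}(u,1-v),
\]
and applying it once more to the second coordinate (after transposing the copula, since Theorem~\ref{ver} is stated for the first variable) yields
\[
C_{X_t,X_{t+h}}(u,v)=u+v-1+C_{T_s^t(U_0),\,T_s^{t+h}(U_0)}(1-u,1-v).
\]
By Corollary~\ref{coraux} the copula $C_{T_s^t(U_0),T_s^{t+h}(U_0)}$ equals $C_{U_0,T_s^h(U_0)}$ for every $t$, so substituting this into the last display shows at once that $C_{X_t,X_{t+h}}=C_{X_0,X_h}$ everywhere on $I^2$, and reduces the problem to evaluating $C_{U_0,T_s^h(U_0)}$ at the reflected point $(1-u,1-v)$.

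It then remains to substitute the expression for $C_{U_0,T_s^h(U_0)}$ obtained in the proof of Proposition~\ref{copcr} — namely \eqref{cop} with $u,v$ replaced by $1-u,1-v$ — and to keep track of the index. If $n_0=n_0(u;h)$ is the index in the statement, i.e. the unique $k$ with $u\in\big(1-F_0(a_{h,k+1}),1-F_0(a_{h,k})\big]$, then equivalently $1-u\in\big[F_0(a_{h,k}),F_0(a_{h,k+1})\big)$, so $n_0$ is exactly the index that Proposition~\ref{copcr} attaches to the first argument $1-u$. Feeding this into \eqref{cop}, collecting the contributions $\mathcal{F}_{h,k}(1-v)-F_0(a_{h,k})$ coming from the fully-covered nodes together with the partially-covered-node term $\min\big\{1-u,\mathcal{F}_{h,n_0}(1-v)\big\}-F_0(a_{h,n_0})$, and adding the leftover $u+v-1$, produces \eqref{copb}.

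The computation is essentially routine; the hard part will be the bookkeeping of the reflected index set $u\mapsto 1-u$ together with the open/closed endpoints of the nodes $\big[a_{h,k},a_{h,k+1}\big)$, which is where an off-by-one in the range of the sum could slip in. After arriving at \eqref{copb} it is worth verifying — exactly as in the remark following Proposition~\ref{copcr}, by rewriting \eqref{copb} in terms of $\mu_s$ and using the $T_s^h$-invariance of $\mu_s$ — that the resulting expression is genuinely a copula, i.e. that $C_{X_t,X_{t+h}}(0,v)=0$, $C_{X_t,X_{t+h}}(1,v)=v$ and the symmetric identities in $u$ hold, which is the natural consistency check on the reflected formula.
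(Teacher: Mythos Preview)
Your proposal is correct and follows essentially the same route as the paper: both arguments apply Theorem~\ref{ver} twice to obtain the survival-copula relation $C_{X_t,X_{t+h}}(u,v)=u+v-1+C_{T_s^t(U_0),T_s^{t+h}(U_0)}(1-u,1-v)$, invoke Corollary~\ref{coraux} for the shift-invariance, and then substitute \eqref{cop} at the reflected point to obtain \eqref{copb}. The only cosmetic difference is that the paper starts from $C_{T_s^t(U_0),T_s^{t+h}(U_0)}=C_{\phi^{-1}(X_t),\phi^{-1}(X_{t+h})}$ and peels off $\phi^{-1}$, whereas you start from $C_{X_t,X_{t+h}}$ and peel off $\phi$; the two are equivalent, and your observation that the reflected index $n_0(u;h)$ coincides with the index Proposition~\ref{copcr} assigns to $1-u$ is exactly the bookkeeping the paper leaves implicit.
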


\begin{proof} Since the inverse of an almost everywhere decreasing function is still decreasing almost everywhere and $X_t=\p\big(T_s^t(U_0)\big)$, upon applying  Theorem \ref{ver} twice, it follows that
\small
\begin{eqnarray*}
C_{T^t_s(U_0),\,T^{t+h}_s(U_0)}(u,v)&=&C_{\ip(X_t),\,\ip(X_{t+h})}(u,v)=u-C_{X_t,\ip(X_{t+h})}(u,1-v)\\
&=&u-\big(1-v-C_{X_t,\,X_{t+h}}(1-u,1-v)\big),
\end{eqnarray*}
\normalsize
or, equivalently (changing $u$ by $1-u$ and $v$ by $1-v$),
\small
\begin{equation}\label{mm}
C_{X_t,\,X_{t+h}}(u,v)=u+v-1+C_{T^t_s(U_0),\,T^{t+h}_s(U_0)}(1-u,1-v).
\end{equation}
\normalsize
Now \eqref{copb} follows upon applying Proposition \ref{copcr} with the identity map and substituting equation \eqref{cop} into \eqref{mm}. As for the equality $C_{X_0,X_h}(u,v)=C_{X_t,X_{t+h}}(u,v)$, Corollary \ref{coraux} and Theorem \ref{ver} applied to \eqref{mm} yield
\small
\begin{eqnarray*}
C_{X_t,\,X_{t+h}}(u,v)&=&u+v-1+C_{U_0,\,T^{h}_s(U_0)}(1-u,1-v)\\
&=&u+v-1+C_{\ip(\p(U_0)),\ip(\p(T^{h}_s(U_0)))}(1-u,1-v)\\
&=&C_{\p(U_0),\p(T^{h}_s(U_0))}(u,v)\,\,=\,\,C_{X_0,X_h}(u,v),
\end{eqnarray*}
\normalsize
everywhere in $I^2$, as desired.\fim
\end{proof}

\begin{rmk}
In view of the ``stationarity'' results of Theorems \ref{invar} and \ref{copdec}, a copula associated to a pair $(X_t,X_{t+h})$ from an MP process will be referred as \emph{lag $h$} MP \emph{copula}.
\end{rmk}

The copulas in \eqref{cop} and \eqref{copb} are both singular, as it can be readily verified. So the question that naturally arises is, for each $h$, what is the support of $C_{X_t,X_{t+h}}$? The question is addressed in the next proposition, which will be useful in Sections 5 and 6. For simplicity, for a given MP process and $h>0$, let $\ell_{h,k}^+,\ell_{h,k}^-:\big[F_0(a_{h,k}),F_0(a_{h,k+1})\big)\rightarrow I$ be functions defined by
\[\ell_{h,k}^+(x)=\frac{x-F_0(a_{h,k})}{F_0(a_{h,k+1})-F_0(a_{h,k})}\quad\mbox{ and }\quad\ell_{h,k}^-(x)=\frac{F_0(a_{h,k+1})-x}{F_0(a_{h,k+1})-F_0(a_{h,k})},\]
for all $k=0,\cdots,2^h-1$. Notice that, for each $k$, $\ell_{h,k}^+$ is the linear function connecting the points $\big(F_0(a_{h,k}),0\big)$ and $\big(F_0(a_{h,k+1}),1\big)$, while $\ell_{h,k}^-$ connects the points $\big(F_0(a_{h,k}),1\big)$ and $\big(F_0(a_{h,k+1}),0\big)$.

        \begin{prop}\label{supp}
        Let $\{X_n\}_{n\in\N}$ be an \emph{MP} process with parameter $s\in(0,1)$, for $\phi_1\in\L^1(\mu_s)$ an almost everywhere increasing function and let $\{Y_n\}_{n\in\N}$ be an MP process with parameter $s\in(0,1)$, for $\phi_2\in\L^1(\mu_s)$ an almost everywhere decreasing function. Also let  $F_0$ be the distribution function of $U_0$. Then, for any $t,h\in\N$, $h>0$,
        \small
        \begin{equation}\label{spt}
        \mathrm{supp}\{C_{X_t,X_{t+h}}\}=\cup_{k=0}^{2^h-1}\big\{\big(u,\ell_{h,k}^+(u)\big): u\in\big[F_0(a_{h,k}),F_0(a_{h,k+1})\big) \big\}
        \end{equation}
        and
        \small
        \begin{equation}\label{sptdec}
        \mathrm{supp}\{C_{Y_t,Y_{t+h}}\}=\cup_{k=0}^{2^h-1}\big\{\big(u,\ell_{h,k}^-(u)\big): u\in\big[F_0(a_{h,k}),F_0(a_{h,k+1})\big) \big\}.
        \end{equation}
        \normalsize
        \end{prop}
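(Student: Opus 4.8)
The plan is to derive the support of $C_{X_t,X_{t+h}}$ directly from the explicit formula \eqref{cop} obtained in Proposition \ref{copcr}, and then to get \eqref{sptdec} from \eqref{spt} by a symmetry argument via Theorem \ref{ver}. Since \eqref{cop} is piecewise defined over the intervals $\big[F_0(a_{h,k}),F_0(a_{h,k+1})\big)$ in the first coordinate, the natural strategy is to fix $k$ and restrict attention to the ``slice'' $u\in\big[F_0(a_{h,k}),F_0(a_{h,k+1})\big)$, on which $n_0(u;h)=k$; there the copula reads
\small
\[C_{X_t,X_{t+h}}(u,v)=\left(\sum_{j=0}^{k-1}\big[\mathcal{F}_{h,j}(v)-F_0(a_{h,j})\big]\right)\!\delta_{\N^\ast}(k)+\min\big\{u,\mathcal{F}_{h,k}(v)\big\}-F_0(a_{h,k}).\]
\normalsize
The only place the mass can concentrate on this slice is where the $\min$ switches from one branch to the other, i.e.\ along the curve $u=\mathcal{F}_{h,k}(v)$, equivalently $v=\mathcal{F}_{h,k}^{-1}(u)$ (recall $\mathcal{F}_{h,k}$ is a one-to-one increasing continuous bijection of $I$ onto $\big[F_0(a_{h,k}),F_0(a_{h,k+1})\big]$, so its inverse is well defined). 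Off that curve, on each side, $C_{X_t,X_{t+h}}$ is affine in $u$ for fixed $v$ and depends only on $v$ through smooth pieces, so the $C$-measure of any small rectangle lying entirely on one side vanishes; hence $\mathrm{supp}\{C_{X_t,X_{t+h}}\}\subseteq\cup_k\{(u,\mathcal{F}_{h,k}^{-1}(u)):u\in[F_0(a_{h,k}),F_0(a_{h,k+1})]\}$.

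The second step is to identify $\mathcal{F}_{h,k}^{-1}$ with the linear function $\ell_{h,k}^+$. This is where one uses the structure of the MP transformation more seriously. By Proposition \ref{copcr} the copula has uniform marginals, so the conditional distribution $v\mapsto\partial_u C_{X_t,X_{t+h}}(u,v)$ must be, for $u$ in the $k$-th slice, a point mass at $v=\mathcal{F}_{h,k}^{-1}(u)$; computing the $C$-mass of the vertical strip $[F_0(a_{h,k}),F_0(a_{h,k+1}))\times[0,v]$ and differentiating (or, more cleanly, noting that the marginal of the conditional law over that strip must be uniform on $[0,1]$ while the support curve carries all the mass) forces $\mathcal{F}_{h,k}^{-1}$ to push the uniform law on $[F_0(a_{h,k}),F_0(a_{h,k+1})]$ forward to the uniform law on $[0,1]$; the only increasing map doing that is the affine one, namely $\ell_{h,k}^+$. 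Alternatively, and perhaps more transparently, one argues at the level of the original variables: the support of the joint law $H_{0,h}$ of $(U_0,T_s^h(U_0))$ is the graph $\{(x,T_s^h(x)):x\in I\}$, which over the $k$-th node $[a_{h,k},a_{h,k+1})$ is $\{(x,y): x=\mathcal{T}_{h,k}(y)\}$; transforming by $(F_0,F_0)$ (which is exactly what Sklar's theorem does to get the copula support) sends this to $\{(F_0(x),F_0(T_s^h(x)))\}$, and on the $k$-th node $F_0\circ\mathcal{T}_{h,k}\circ F_0^{-1}=\mathcal{F}_{h,k}$ is precisely the map whose inverse we need. One then checks the boundary values $\mathcal{F}_{h,k}(0)=F_0(a_{h,k})$, $\mathcal{F}_{h,k}(1)=F_0(a_{h,k+1})$ already recorded before Proposition \ref{copcr}, and that the copula's uniform-marginal constraint pins down the shape to be linear.

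For the reverse inclusion, I would exhibit, for each interior point $(u_0,\ell_{h,k}^+(u_0))$ of the candidate set, a sequence of shrinking rectangles centered there whose $C$-measure is bounded below by a positive multiple of the shorter side, using \eqref{cop}: crossing the curve $u=\mathcal{F}_{h,k}(v)$ the function $\min\{u,\mathcal{F}_{h,k}(v)\}$ has a genuine ``kink'', so the mixed difference $C(u_2,v_2)-C(u_1,v_2)-C(u_2,v_1)+C(u_1,v_1)$ over a rectangle straddling the curve is strictly positive. Finally, \eqref{sptdec} follows immediately: if $\phi_2$ is almost everywhere decreasing, Proposition \ref{copdec} (or directly Theorem \ref{ver}, $C_{\phi_2(X),Y}(u,v)=u-C_{X,Y}(u,1-v)$ applied in the second slot) relates $C_{Y_t,Y_{t+h}}$ to $C_{X_t,X_{t+h}}$ by the reflection $v\mapsto 1-v$, which maps the graph of $\ell_{h,k}^+$ to the graph of $\ell_{h,k}^-$ on the same interval, and support is preserved under this bijective change of coordinates.

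The main obstacle I anticipate is the second step: turning the qualitative observation ``the mass sits on the curve $u=\mathcal{F}_{h,k}(v)$'' into the quantitative claim that this curve is the \emph{line} $\ell_{h,k}^+$. The cleanest route is the pushforward/marginal argument — since $C$ has uniform marginals and all its mass on the $k$-th slice lies on one monotone curve, the curve must transport Lebesgue measure on $[F_0(a_{h,k}),F_0(a_{h,k+1}))$ to Lebesgue measure on $[0,1)$ (up to the normalization by the slice length $F_0(a_{h,k+1})-F_0(a_{h,k})$), and the unique increasing such map through the prescribed endpoints is affine — so that the identification $\mathcal{F}_{h,k}^{-1}=\ell_{h,k}^+$ drops out without ever needing an explicit formula for $\mu_s$ or $T_s^h$. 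Everything else (disjointness of the slices, the kink producing positive rectangle mass, the reflection for the decreasing case) is routine bookkeeping on top of \eqref{cop} and \eqref{copb}.
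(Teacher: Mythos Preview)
Your first step coincides with the paper's approach: for a rectangle $R=[u_1,u_2]\times[v_1,v_2]$ with $u_1,u_2$ in the $k$-th slice $[F_0(a_{h,k}),F_0(a_{h,k+1}))$, the paper computes directly that $V_{C_{\boldsymbol X}}(R)=V_M\big([u_1,u_2]\times[\mathcal{F}_{h,k}(v_1),\mathcal{F}_{h,k}(v_2)]\big)$ with $M(u,v)=\min\{u,v\}$, which is exactly your observation that the only mass lies where the $\min$ switches, i.e.\ on the graph of $\mathcal{F}_{h,k}^{-1}$. Your treatment of the decreasing case via the reflection $(u,v)\mapsto(1-u,1-v)$ from \eqref{mm} is a tidy alternative to the paper's parallel direct computation for $C_{\boldsymbol Y}$.

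The genuine gap is your ``second step'', the identification $\mathcal{F}_{h,k}^{-1}=\ell_{h,k}^+$. Your pushforward argument does not go through: the uniform $v$-marginal of $C$ is the single identity $\sum_{k=0}^{2^h-1}\big[\mathcal{F}_{h,k}(v)-F_0(a_{h,k})\big]=v$ (which is just $T_s$-invariance of $\mu_s$), one equation constraining $2^h$ functions; it does not force any individual $\mathcal{F}_{h,k}$ to be affine. Concretely, the $C$-mass of the strip $[F_0(a_{h,k}),F_0(a_{h,k+1}))\times[0,v]$ equals $\mathcal{F}_{h,k}(v)-F_0(a_{h,k})$, and nothing you have written shows this is proportional to $v$ --- that is precisely the claim to be proved, so the argument is circular. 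The paper's proof makes the very same jump (from the $V_M$ identity straight to the line $\ell_{h,k}^+$) without justification. In fact $\mathcal{F}_{h,k}^{-1}=\ell_{h,k}^+$ is equivalent to $\mu_s([a_{h,k},x])=F_0\big(T_s^h(x)\big)\cdot\mu_s([a_{h,k},a_{h,k+1}])$ for all $x$ in the $k$-th node, a branch-wise conformality that the MP map does not enjoy: for $h=1$, $k=0$, the invariant density satisfies $h_s(x)\sim c\,x^{-s}$ near $0$, so $F_0(T_s(x))/F_0(x)\to 1$ as $x\to 0$, whereas $\ell_{1,0}^+(F_0(x))/F_0(x)\equiv 1/F_0(a_{1,1})>1$. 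Thus what both arguments actually establish is $\mathrm{supp}\{C_{X_t,X_{t+h}}\}=\cup_k\,\mathrm{graph}(\mathcal{F}_{h,k}^{-1})$; the further linearity claim cannot be rescued by marginal considerations alone.
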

\begin{proof}
Let $R=[u_1,u_2]\times[v_1,v_2]$ be a rectangle in $I^2$ and let its $C_{X_t,X_{t+h}}$-volume be denoted by $V_{C_{\boldsymbol X}}(R)$. Let $k\in\{0,\cdots,2^h-1\}$ be fixed and suppose that $u_i\in\big[F_0(a_{h,k}),F_0(a_{h,k+1})\big]$. This implies that $n_0=k$ for all four terms in $V_{C_{\boldsymbol X}}(R)$, hence the summands and constants on the copula cancel out so that we have
\small
\begin{align*}
V_{C_{\boldsymbol X}}(R)&=\min\big\{u_1, \mathcal{F}_{h,k}(v_1)\big\}+\min\big\{u_2, \mathcal{F}_{h,k}(v_2)\big\}-\min\big\{u_1, \mathcal{F}_{h,k}(v_2)\big\}-\min\big\{u_2, \mathcal{F}_{h,k}(v_1)\big\}\\
&=V_M\big([u_1,u_2]\times[\mathcal{F}_{h,k}(v_1),\mathcal{F}_{h,k}(v_1)]\big),
\end{align*}
\normalsize
where $M(u,v)=\min\{u,v\}$ is the Frech\`et upper bound copula whose support is the main diagonal in $I^2$. Since $[u_1,u_2]\times[\mathcal{F}_{h,k}(v_1),\mathcal{F}_{h,k}(v_1)]\subset[F_0(a_{h,k}),F_0(a_{h,k+1})]^2$, $V_{C_{\boldsymbol X}}(R)>0$ if, and only if, $R\,\cap\big\{\big(u,\ell_{h,k}^+(u)\big): u\in\big[F_0(a_{h,k}),F_0(a_{h,k+1})\big) \big\}\neq \emptyset$.

Analogously, denoting the $C_{Y_t,Y_{t+h}}$-volume of $R$ by $V_{C_{\boldsymbol Y}}(R)$, if $u_i\in\big[1-F_0(a_{h,k}),1-F_0(a_{h,k+1})\big]$, we have
\small
\begin{align}\label{sptaux}
V_{C_{\boldsymbol Y}}(R)&=\min\big\{1-u_1, \mathcal{F}_{h,k}(1-v_1)\big\}+\min\big\{1-u_2, \mathcal{F}_{h,k}(1-v_2)\big\}-\nonumber\\
&\hskip2cm-\ \min\big\{1-u_1, \mathcal{F}_{h,k}(1-v_2)\big\}-\min\big\{1-u_2, \mathcal{F}_{h,k}(1-v_1)\big\}\nonumber\\
&=V_M\big([1-u_1,1-u_2]\times[\mathcal{F}_{h,k}(1-v_2),\mathcal{F}_{h,k}(1-v_1)]\big).
\end{align}
\normalsize
Since $[1-u_1,1-u_2]\times[\mathcal{F}_{h,k}(1-v_1),\mathcal{F}_{h,k}(1-v_2)]\subset[F_0(a_{h,k}),F_0(a_{h,k+1})]^2$, $V_{C_{\boldsymbol Y}}(R)$
is positive if, and only if, $R\,\cap\big\{\big(u,\ell_{h,k}^-(u)\big): u\in\big[F_0(a_{h,k}),F_0(a_{h,k+1})\big) \big\}\neq \emptyset$ (notice the terms $1-v_i$ in expression \eqref{sptaux}, for $i=1,2$).
Now \eqref{spt} and \eqref{sptdec} follow by observing that $I=\bigcup_{k=0}^{2^h-1}\big[F_0(a_{h,k}),F_0(a_{h,k+1})\big]=\bigcup_{k=0}^{2^h-1}\big[1-F_0(a_{h,k+1}),1-F_0(a_{h,k})\big]$.\fim
\end{proof}

\begin{rmk}
We end up this section by noticing that as an application of Propositions \ref{invar} and \ref{copdec}, together with the so-called copula version of Hoeffding's lemma (see Nelsen (2006)), we can show in a rather different way that an MP process is weakly stationary. Let $F_{X_t}$ be the distribution function of $X_t$ and notice that $F_{X_t}(x)=F_{X_0}(x)$, for all $t\in\N$, by the stationarity of $\{X_t\}_{t\in \N}$ and since
$C_{X_t,X_{t+h}}(u,v)=C_{X_0,X_{h}}(u,v)$, the result follows immediately.
\end{rmk}

\section{Multidimensional Case}
\setcounter{equation}{0}

\indent In this section we are interested in extending the results from the previous section to the multidimensional case, that is, in this section we are interested in deriving the copulas associated to $n$-dimensional vectors $(X_{t_1},\cdots,X_{t_n})$, $t_1,\cdots,t_n\in\N$, coming from an MP process with $\p$ an increasing almost everywhere function. In view of Theorem \ref{ver}, it suffices to derive the copula associated to the vector $\big(T^{t_1}_s(U_0),\cdots,T^{t_n}_s(U_0)\big)$. It turns out that there are more similarities between the bidimensional and multidimensional cases than one could expect. In fact, an expression very similar in form to \eqref{cop} holds for the multidimensional case as well.

Let $\{X_n\}_{n\in\N}$ be an MP process with parameter $s\in(0,1)$ and $\phi\in\L^1(\mu_s)$ be an almost everywhere increasing function. For the sake of simplicity, we shall use the following notation: let $a,b\in\N$, $a<b$, we shall write $x_{a:\,b}:=(x_a,\cdots,x_b)$ and for a function $f$, $f(x_{a:\,b}):=\big(f(x_a),\cdots,f(x_b)\big)$. Again we shall denote the distribution function of $U_0$ by $F_0$.

        \begin{thm}\label{mdim}
        Let $\{X_n\}_{n\in\N}$ be an \emph{MP} process with parameter $s\in(0,1)$, with $\phi\in\L^1(\mu_s)$ an almost everywhere increasing function. Let $t_1,\cdots,t_n\in\N$ and set $h_i:=t_i-t_1$. Then, for all $(u_1,\cdots,u_n)\in I^n$,
        \small
        \begin{align}\label{mcop}
        C_{X_{t_1},\cdots,X_{t_n}}(u_1,\cdots,u_n)&=\left(\sum_{k=0}^{n_0-1} F_0\Big(b_{h_n,k}\big(F^{-1}_0(u_{2:n})\big)\Big)-F_0(a_{h_n,k})\!\right)\! \delta_{\N^\ast}\!(n_0)  + \nonumber\\
         &\hskip1.5cm+\,\min\big\{u_1,F_0\big(b_{h_n,n_0}\big(F_0^{-1}(u_{2:n})\big)\big)\big\}-F_0(a_{h_n,n_0}),
        \end{align}
        \normalsize
        where  $n_0:=n_0\big(u_1,n)=\big\{k:u_1\in\big[F_0(a_{h_n,k}),F_0(a_{h_n,k+1})\big)\big\}$,  $\{a_{h_n,k}\}_{k=0}^{2^h}$ are the end points of the nodes of $T^{h_n}_s$, for $i=2,\cdots,n$, $j=0,\cdots 2^{h_i}-1$,  $\T_{h_i,j}$ is given by \eqref{ss} and for a vector $(x_2,\cdots,x_n)\in I^{n-1}$,
        $b_{h_n,k}(x_{2:n})=\displaystyle{\min_{i=2,\cdots,n}}\big\{c_{i}(x_i;h_n,k)\big\}$,
        with
        \small
        \[c_i(x_i;h_n,k)=\left\{
        \begin{array}{cc}
        a_{h_n,k}, & \mbox{ if } \ \  B_i(x_i;h_n,k)=\emptyset; \\
        B_i(x_i;h_n,k), & \mbox{otherwise}.
        \end{array}\right.\]
        \normalsize
         and
         \small
         \[ B_i(x_i;h_n,k)=\displaystyle{\min_{j=0,\cdots,2^{h_i}-1}}\hspace{-.1cm}\big\{\T_{h_i,j}(x_i):\T_{h_i,j}(x_i)>a_{h_n,k} \mbox{ and }a_{h_i,j}<a_{h_n,k+1} \big\}.\]
         \normalsize
        \end{thm}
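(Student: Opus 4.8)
The plan is to follow the route of the proof of Proposition~\ref{copcr}, replacing the single intermediate function $\T_{h,k}$ used there by the function $b_{h_n,k}$, which records the combined effect of all of the constraints $T_s^{h_i}(U_0)\le x_i$, $i=2,\dots,n$. As in that proof, by Theorem~\ref{ver} it is enough to determine the copula of $\big(T_s^{t_1}(U_0),\dots,T_s^{t_n}(U_0)\big)$; writing $T_s^{t_i}(U_0)=T_s^{h_i}\big(T_s^{t_1}(U_0)\big)$ and using that $T_s^{t_1}(U_0)$ is again distributed as $\mu_s$, this reduces the problem to the vector $\big(U_0,T_s^{h_2}(U_0),\dots,T_s^{h_n}(U_0)\big)$, all of whose marginals equal $F_0$ by~\eqref{ft}. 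Hence by Sklar's Theorem it suffices to compute the joint distribution function $H$ of this vector and then substitute $x_i=F_0^{-1}(u_i)$. Throughout I would assume, as one may after relabelling, that $t_1\le\cdots\le t_n$, so that $h_n=\max_i h_i$. The first thing to record is a geometric fact: since $T_s^{h_n}=T_s^{h_n-h_i}\circ T_s^{h_i}$ and every branch of $T_s^{h_i}$ is an increasing bijection onto $I$ (implicit in~\eqref{ss}), the nodes of $T_s^{h_n}$ refine those of $T_s^{h_i}$ for each $i$; concretely, each node $[a_{h_n,k},a_{h_n,k+1})$ of $T_s^{h_n}$ lies inside a unique node $[a_{h_i,j},a_{h_i,j+1})$ of $T_s^{h_i}$, on which $T_s^{h_i}$ is increasing with inverse $\T_{h_i,j}$; I write $j=j(i,k)$ for this index.

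Next I would decompose $H$ over the $2^{h_n}$ nodes of $T_s^{h_n}$. Using Lemma~\ref{aea} (equivalently, $\{T_s^{h_i}(U_0)\le x_i\}=\big\{U_0\in\bigcup_j A_{h_i,j}(x_i)\big\}$, with $A_{h_i,j}$ as in~\eqref{ais}),
\[H(x_1,\dots,x_n)=\sum_{k=0}^{2^{h_n}-1}\mu_s\Big([0,x_1]\cap\bigcap_{i=2}^{n}\Big(\bigcup_j A_{h_i,j}(x_i)\Big)\cap[a_{h_n,k},a_{h_n,k+1})\Big).\]
For a fixed $k$ I would simplify the $k$-th summand: since distinct nodes of $T_s^{h_i}$ are disjoint, $\bigcup_j A_{h_i,j}(x_i)$ meets the $k$-th node of $T_s^{h_n}$ only through $A_{h_i,j(i,k)}(x_i)=\big[a_{h_i,j(i,k)},\T_{h_i,j(i,k)}(x_i)\big]$, so intersecting with $[a_{h_n,k},a_{h_n,k+1})$ leaves the interval $\big[a_{h_n,k},\min\{a_{h_n,k+1},\T_{h_i,j(i,k)}(x_i)\}\big]$ when $\T_{h_i,j(i,k)}(x_i)>a_{h_n,k}$, and the empty set otherwise. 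The point needing care is to see that this equals $\big[a_{h_n,k},\min\{a_{h_n,k+1},c_i(x_i;h_n,k)\}\big]$, i.e.\ that the prescription $B_i(x_i;h_n,k)$ singles out exactly the value $\T_{h_i,j(i,k)}(x_i)$: the requirement $a_{h_i,j}<a_{h_n,k+1}$ excludes every branch of $T_s^{h_i}$ to the right of $j(i,k)$, the requirement $\T_{h_i,j}(x_i)>a_{h_n,k}$ excludes every branch to the left (where $\T_{h_i,j}(x_i)\le a_{h_i,j+1}\le a_{h_n,k}$), and $B_i=\emptyset$ holds precisely when even $\T_{h_i,j(i,k)}(x_i)\le a_{h_n,k}$, which is exactly the situation in which no point of the $k$-th node satisfies $T_s^{h_i}(U_0)\le x_i$.

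Intersecting over $i=2,\dots,n$ and with $[0,x_1]$, and letting $n_0$ denote the node of $T_s^{h_n}$ containing $x_1$, the $k$-th summand equals $\mu_s\big([a_{h_n,k},\min\{a_{h_n,k+1},b_{h_n,k}(x_{2:n})\}]\big)$ for $k<n_0$, equals $\mu_s\big([a_{h_n,n_0},\min\{x_1,b_{h_n,n_0}(x_{2:n})\}]\big)$ for $k=n_0$ (using $x_1<a_{h_n,n_0+1}$), and vanishes for $k>n_0$, where $b_{h_n,k}=\min_{i=2,\dots,n}c_i$. Here I would use the simplification that makes the final expression clean: taking $i=n$ in $b_{h_n,k}$ and recalling that the branches of $T_s^{h_n}$ are onto $I$, one has $c_n(x_n;h_n,k)=\T_{h_n,k}(x_n)\le a_{h_n,k+1}$ when $x_n>0$, and $c_n=a_{h_n,k}$ if $x_n=0$; either way $b_{h_n,k}(x_{2:n})\le a_{h_n,k+1}$, so the term $a_{h_n,k+1}$ may be dropped from the minima above. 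Since $F_0(x)=\mu_s([0,x])$ is continuous and increasing, summing the contributions yields
\[H(x_1,\dots,x_n)=\left(\sum_{k=0}^{n_0-1}F_0\big(b_{h_n,k}(x_{2:n})\big)-F_0(a_{h_n,k})\right)\delta_{\N^\ast}(n_0)+\min\big\{F_0(x_1),F_0\big(b_{h_n,n_0}(x_{2:n})\big)\big\}-F_0(a_{h_n,n_0}),\]
and applying Sklar's Theorem with all marginals equal to $F_0$, i.e.\ putting $x_i=F_0^{-1}(u_i)$ (so that $F_0(x_1)=u_1$ and $n_0=\{k:u_1\in[F_0(a_{h_n,k}),F_0(a_{h_n,k+1}))\}$), gives exactly~\eqref{mcop}; boundary values of the $u_i$ are dealt with by continuity, as in Proposition~\ref{copcr}.

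The genuinely delicate part is the bookkeeping of the second and third paragraphs: setting up the node-refinement picture, verifying that the combinatorially defined $B_i$ and $c_i$ encode precisely the interval described above (including that they reduce to a degenerate interval exactly in the vacuous case), and observing that the term $i=n$ automatically caps $b_{h_n,k}$ at $a_{h_n,k+1}$, which is what lets the explicit $\min$ with $a_{h_n,k+1}$ disappear from the final statement. Once these identifications are in place, the remaining computation is the same measure-theoretic one already carried out in Proposition~\ref{copcr}.
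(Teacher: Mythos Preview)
Your proof is correct and follows essentially the same route as the paper's: reduce via Theorem~\ref{ver} and the $T_s$-invariance of $\mu_s$ to the vector $\big(U_0,T_s^{h_2}(U_0),\dots,T_s^{h_n}(U_0)\big)$, decompose the joint distribution over the nodes of $T_s^{h_n}$, identify the contribution of each node as $\mu_s\big([a_{h_n,k},b_{h_n,k}(x_{2:n})]\big)$, and then apply Sklar's Theorem. Your exposition is slightly more explicit than the paper's in two places---the node-refinement picture (your index $j(i,k)$) and the observation that the term $i=n$ forces $b_{h_n,k}\le a_{h_n,k+1}$---but these are clarifications of the same argument rather than a different approach.
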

\begin{proof}
Let $\{X_n\}_{n\in\N}$ be an MP process with parameter $s\in(0,1)$ and $\phi\in\L^1(\mu_s)$ be an almost everywhere increasing function. Without loss of generality, we can assume that  $0<t_1<\cdots<t_n$. In view of Theorem \ref{ver}, it suffices to work with the vector $\big(T^{t_1}_s(U_0),\cdots,T^{t_n}_s(U_0)\big)$. Let $H_{t_1,\cdots,t_n}$ be the distribution function of $\big(T^{t_1}_s(U_0),\cdots,T^{t_n}_s(U_0)\big)$. Let  $h_i=t_i-t_1$, for each $i =1,\cdots,n$, and notice that $h_i>0$ since $t_1<t_i$, for all $i=2,\cdots,n$. Let $(x_1,\cdots,x_n)\in (0,1)^n$ and for the sake of simplicity, let $Y_{t_1}:=T^{t_1}_s(U_0)$, so that we have
\small
\begin{align}\label{aux2}
H_{t_1,\cdots,t_n}(x_1,\cdots,x_n)&=\P\big(T^{t_1}_s(U_0)\leq x_1,\cdots,T^{t_n}_s(U_0)\leq x_n\big)\nonumber\\
&=\P\big(Y_{t_1}\leq x_1,T^{h_2}_s(Y_{t_1})\leq x_2,\cdots,T^{h_n}_s(Y_{t_1})\leq x_n\big)\nonumber\\
&=\P\Big(Y_{t_1}\!\!\in[0,x_1],Y_{t_1}\!\!\in\cup_{k=0}^{2^{h_2}-1}A_{h_2,k}(x_2),\cdots,Y_{t_1}\!\!\in\cup_{k=0}^{2^{h_n}-1}A_{h_n,k}(x_n)\!\Big)\nonumber\\
&=\P\Big(Y_{t_1}\!\!\in[0,x_1]\cap_{i=2}^n\big[\cup_{k=0}^{2^{h_i}-1}A_{h_i,k}(x_i)\big]\Big)\nonumber\\
&=\P\Big(U_0\in \cap_{i=2}^n\cup_{k=0}^{2^{h_i}-1}\big[[0,x_1]\cap A_{h_i,k}(x_i)\big]\Big),
\end{align}
\normalsize
where $A_{h_i,k}$'s are given by \eqref{ais} and the last equality is a consequence of the $T_s$-invariance of $\mu_s$. For $k=0,\cdots,2^{h_n-1}$, let
\small
\[\widetilde{A}_{h_n,k}(x_{2:n})=A_{h_n,k}(x_n)\cap_{i=2}^{n-1}\big[\cup_{j=0}^{2^{h_i}-1}A_{h_i,j}(x_i)\big].\]
\normalsize
In order to simplify the notation, for $i=2,\cdots,n$ and $k=0,\cdots, 2^{h_n}-1$, let
\small
\[B_i(x_i;h_n,k)=\min_{j=0,\cdots,2^{h_i}-1}\hspace{-.1cm}\big\{\T_{h_i,j}(x_i):\T_{h_i,j}(x_i)>a_{h_n,k} \mbox{ and }a_{h_i,j}<a_{h_n,k+1} \big\}.\]
\normalsize
For each $k$ and $i$, $B_i(x_i;h_n,k)$ is either the smallest $\T_{h_i,j}(x_i)$ which is greater than $a_{h_n,k}$ and such that the correspondent $A_{h_i,j}(x_i)$ has non-empty intersection with $A_{h_n,k}(x_n)$, or empty. Let
\small
\[c_{i}(x_i;h_n,k)=\left\{
\begin{array}{cc}
  a_{h_n,k}, & \mbox{ if } \ \  B_i(x_i;h_n,k)=\emptyset; \\
  B_i(x_i;h_n,k), & \mbox{otherwise}.
\end{array}\right.\]
\normalsize
Then, for each $k=1,\cdots,2^{h_n}-1$, setting $b_{h_n,k}(x_{2:n})=\displaystyle{\min_{i=2,\cdots,n}}\big\{c_{i}(x_i;h_n,k)\big\}$, it follows that
\small
\[\widetilde{A}_{h_n,k}(x_{2:n})=\big[a_{h_n,k}, b_{h_n,k}(x_{2:n})\big],\]
\normalsize
which is a closed subset of $[a_{h_n,k},a_{h_n,k+1}]$. Also notice that, from the definition of $b_{h_n,k}(x_{2:n})$, we could have $\widetilde{A}_{h_n,k}(x_{2:n})=\{a_{h_n,k}\}$, in which case we set $\widetilde{A}_{h_n,k}(x_{2:n})=\emptyset$ (although from a measure-theoretical point of view, this correction makes no difference). Again we are omitting the dependence in $s$ from the notation on both, $b_{h_n,k}$ and $\widetilde{A}_{h_n,k}$. Each $b_{h_n,k}(x_{2:n})$ above determines the smallest $\mathcal{T}_{h_i,j}(x_i)$ that lies on the $k$-th node of $T^{h_n}_s$ (which has the smallest nodes among all $T^{h_i}_s$'s), so that $\widetilde{A}_{h_n,k}$'s are just the intersection of all $A_{h_i,k}(x_i)$'s with end point in the $k$-th node of $T^{h_n}_s$. Also notice that the $\widetilde{A}_{h_n,k}$'s are pairwise disjoints. One can rewrite \eqref{aux2} as
\small
\begin{equation}\label{aux3}
H_{t_1,\cdots,t_n}(x_1,\cdots,x_n)=\P\Big(U_0\in\cup_{k=0}^{2^{h_n}-1}\big[\widetilde{A}_{h_n,k}(x_{2:n})\cap[0,x_1]\big]\Big).
\end{equation}
\normalsize
Now, let $n_1:=n_1(x_1;n)=\big\{k:x_1\in[a_{h_n,k},a_{h_n,k+1})\big\}\in\{0,\cdots,2^{h_n}-1\}$, and assume for the moment that $n_1\geq1$. Then \eqref{aux3} becomes
\small
\begin{align*}
H_{t_1,\cdots,t_n}(x_1,\cdots,x_n)&= \sum_{k=0}^{n_1-1}  \P\big(U_0 \in \widetilde{A}_{h_n,k}(x_{2:n})\big) +
\P\big(U_0 \in \widetilde{A}_{h_n,n_1} (x_{2:n})\cap [a_{h_n,n_1},x_1]\big)\\
&=   \sum_{k=0}^{n_1-1}\mu_s\big([a_{h_n,k},b_{h_n,k}(x_{2:n})]\big)+\mu_s\big([a_{h_n,n_1},\min\{x_1,b_{h_n,n_1}   (x_{2:n})\}]\big)\\
&=\!    \sum_{k=0}^{n_1-1}   \big[F_0\big(b_{h_n,k}(x_{2:n})\big) - F_0(a_{h_n,k})\big]  + \min\big\{F_0(x_1),F_0(b_{h_n,n_1} (x_{2:n}))\big\} - F_0(a_{h_n,n_1}).
\end{align*}
\normalsize
If $n_1=0$, then
\small
\[H_{t_1,\cdots,t_n}(x_1,\cdots,x_n)=\min\big\{F_0(x_1),F_0(b_{h_n,0}(x_{2:n}))\big\}-F_0(a_{h_n,0}).\]
\normalsize
In any case, we can write
\small
\begin{eqnarray*}
H_{t_1,\cdots,t_n}(x_1,\cdots,x_n)&=&\left(\sum_{k=0}^{n_1-1} F_0\big(b_{h_n,k}(x_{2:n})\big)-F_0(a_{h_n,k})\!\right)\!\delta_{\N^\ast}\!(n_1) +\\ &&\hskip.7cm+\,\min\big\{F_0(x_1),F_0(b_{h_n,n_1}(x_{2:n}))\big\}-F_0(a_{h_n,n_1}).
\end{eqnarray*}
\normalsize
 Recall that the distribution function of $T^t_s(U_0)$ is also $F_0$ by the $T_s$-invariance of $\mu_s$. Now applying Sklar's Theorem, it follows that,
\small
\begin{align*}
\hskip-2cmC_{X_{t_1},\cdots,X_{t_n}}(u_1,\cdots,u_n)&=H_{t_1,\cdots,t_n}\big(F_0^{-1}(u_1),\cdots, F_0^{-1}(u_n)\big)\\
&=\left(\sum_{k=0}^{n_0-1} F_0\Big(b_{h_n,k}\big(F^{-1}_0(u_{2:n})\big)\Big)-F_0(a_{h_n,k})\!\right)\!\delta_{\N^\ast}\!(n_1)  +\\
&\hskip1cm+\min\big\{u_1,F_0\big(b_{h_n,n_0}\big(F_0^{-1}(u_{2:n})\big)\big)\big\}-F_0(a_{h_n,n_0}).
\end{align*}
\normalsize
where  $n_0:=n_1\big(F_0^{-1}(u_1),n\big)=\big\{k:u_1\in\big[F_0(a_{h_n,k}),F_0(a_{h_n,k+1})\big)\big\}$,
which is the desired formula.\fim
\end{proof}
\begin{rmk}
Notice that the proof of Theorem \ref{mdim} from equation \eqref{aux3} on is exactly the same as the one in Proposition \ref{copcr} with the obvious notational adaptations.
\end{rmk}
Now we turn our attention to the case where $\p$ is an almost everywhere decreasing function. In view of Theorem \ref{ver}, one cannot expect a simple expression for the copula. What happens is that the copula in this case will be the sum of the lower dimensions copulas related to the iterations $T_s^k(U_0)$, as the next proposition shows.

        \begin{prop}\label{mdec}
        Let $\{X_n\}_{n\in\N}$ be an MP process with parameter $s\in(0,1)$, and $\phi\in\L^1(\mu_s)$ be an almost everywhere decreasing function. Let  $t,h_1,\cdots,h_n\in\N$, $0<h_1<\cdots<h_n$ and set $Y_0:=U_0$ and $Y_k:=T_s^{h_k}(U_0)$. Denote the copula associated to the random vector $(X_t,X_{t+h_1},\cdots,X_{t+h_n})$ by $C_{\boldsymbol t}$. Then the following relation holds
        \small
        \begin{eqnarray}\label{mrel}
        &&\hskip-.6cm C_{\boldsymbol t}(u_0,\cdots,u_n)=1-n+\sum_{i=0}^nu_i +\sum_{i=0}^n\sum_{j=i+1}^n C_{Y_i,Y_j}(1-u_i,1-u_j)+\cdots+\nonumber\\
        &&+\,(-1)^{n-1}\sum_{k_1=0}^n\sum_{k_2=k_1+1}^n\cdots\!\!\!\sum_{k_{n-1}=k_{n-2}+1}^n\!\!\!\!\!
        C_{Y_{k_1},\cdots,Y_{k_{n-1}}}(1-u_{k_1},\cdots,1-u_{k_{n-1}})+\nonumber\\
        &&\hspace{1cm}+\,(-1)^{n}C_{U_0,Y_1,\cdots,Y_n}(1-u_0,\cdots,1-u_n),
        \end{eqnarray}
        \normalsize
        everywhere in $I^{n+1}$.
        \end{prop}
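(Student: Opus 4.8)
The plan is to reduce the $(n+1)$-dimensional claim to an inclusion--exclusion expansion of a joint survival function, so that it becomes $2^{n+1}$ copies of the bivariate computation already carried out in Proposition~\ref{copdec}. First, exactly as in Propositions~\ref{invar} and \ref{copdec}, the stationarity of $\{X_n\}_{n\in\N}$ gives $C_{\boldsymbol t}=C_{X_0,X_{h_1},\cdots,X_{h_n}}$ everywhere on $I^{n+1}$ (the two vectors have the same joint law and the same continuous marginals), so it suffices to treat $(X_0,X_{h_1},\cdots,X_{h_n})=\big(\p(Y_0),\p(Y_1),\cdots,\p(Y_n)\big)$. By the $T_s$-invariance of $\mu_s$ each $Y_k$ has law $\mu_s$, hence common continuous distribution function $F_0$; and since $\mu_s\ll\lm$, the $\lm$-null set on which $\p$ fails to be decreasing is $\mu_s$-null, so it is invisible to every $Y_k$, and we may work with the (generalized) inverse $\ip$, which is itself decreasing almost everywhere, just as in Theorem~\ref{ver}.

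Next I would write $H$ for the joint distribution function of $\big(\p(Y_0),\cdots,\p(Y_n)\big)$. For $(z_0,\cdots,z_n)\in(0,1)^{n+1}$ one has, almost surely, $\p(Y_i)\le z_i\iff Y_i\ge\ip(z_i)$ (the argument of Theorem~\ref{ver}, applied in each coordinate), so $H(z_0,\cdots,z_n)=\P\big(Y_0\ge\ip(z_0),\cdots,Y_n\ge\ip(z_n)\big)$, the joint survival function of $(Y_0,\cdots,Y_n)$ at $\big(\ip(z_0),\cdots,\ip(z_n)\big)$. Expanding the probability of this intersection of complements by inclusion--exclusion gives
\[H(z_0,\cdots,z_n)=\sum_{S\subseteq\{0,\cdots,n\}}(-1)^{|S|}\,\P\big(Y_i\le\ip(z_i),\ i\in S\big)=\sum_{S\subseteq\{0,\cdots,n\}}(-1)^{|S|}\,H_S\big((\ip(z_i))_{i\in S}\big),\]
where $H_S$ is the joint distribution function of the subfamily $(Y_i)_{i\in S}$ and $H_\emptyset\equiv1$.

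Then I would apply Sklar's Theorem to each $H_S$ and use $F_{Y_i}=F_0$ for all $i$ to rewrite $H_S\big((\ip(z_i))_{i\in S}\big)=C_{(Y_i)_{i\in S}}\big((F_0(\ip(z_i)))_{i\in S}\big)$. Substituting $z_i=F_{X_{h_i}}^{-1}(u_i)$ (with $h_0:=0$) and noting, exactly as in the proof of Proposition~\ref{copdec}, that $F_{X_{h_i}}=1-F_0\circ\ip$ and hence $F_0\big(\ip(F_{X_{h_i}}^{-1}(u_i))\big)=1-u_i$, a final application of Sklar's Theorem yields
\[C_{\boldsymbol t}(u_0,\cdots,u_n)=\sum_{S\subseteq\{0,\cdots,n\}}(-1)^{|S|}\,C_{(Y_i)_{i\in S}}\big((1-u_i)_{i\in S}\big),\]
with the conventions $C_\emptyset\equiv1$ and $C_{\{i\}}(w)=w$. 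Collecting the terms of this sum according to $|S|$ --- the subsets of size at most $1$ producing the affine part, the subsets of size $m$ producing the $m$-fold nested sum of $m$-dimensional copulas $C_{Y_{k_1},\cdots,Y_{k_m}}$ with sign $(-1)^m$, and the full subset producing the top term $C_{U_0,Y_1,\cdots,Y_n}(1-u_0,\cdots,1-u_n)$ --- yields \eqref{mrel}.

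There is no substantial difficulty beyond the bivariate case: once the inclusion--exclusion is in place, the $n$-variate statement is just $2^{n+1}$ instances of the computation in Proposition~\ref{copdec}. The points requiring care are, first, the handling of the pseudo-inverse $\ip$ when $\p$ is only almost everywhere decreasing and possibly discontinuous (dispatched, as above, by $\mu_s\ll\lm$ together with the argument already used for Theorem~\ref{ver}), and, second, the purely combinatorial step of matching the subset-indexed alternating sum to the nested-summation form of \eqref{mrel}, including the degenerate $0$- and $1$-element subsets. An alternative, induction-based route would peel off one factor $\p$ at a time using the one-coordinate analogue of Theorem~\ref{ver}, namely $C_{\p(Z_0),Z_1,\cdots,Z_m}(v_0,\cdots,v_m)=C_{Z_1,\cdots,Z_m}(v_1,\cdots,v_m)-C_{Z_0,Z_1,\cdots,Z_m}(1-v_0,v_1,\cdots,v_m)$ for $\p$ decreasing a.e. (with the same proof as the bivariate case); there the extra care is that the induction hypothesis must allow vectors in which some coordinates have been reflected and others have not.
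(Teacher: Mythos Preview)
Your proposal is correct and follows essentially the same route as the paper's own proof: reduce by stationarity to $(X_0,X_{h_1},\cdots,X_{h_n})$, express its joint distribution as the joint survival function of $(Y_0,\cdots,Y_n)$, expand by inclusion--exclusion, and then apply Sklar's theorem termwise using $F_0\big(\ip(F_{X_{h_i}}^{-1}(u_i))\big)=1-u_i$. The paper phrases the inclusion--exclusion step as ``a long chain of a conditioning argument'' and writes the intermediate identity at the level of joint distribution functions before invoking Sklar, but the computation is the same as yours.
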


\begin{proof}
Let  $t,h_1,\cdots,h_n\in\N$, $0<h_1<\cdots<h_n$, $t\neq 0$. Set $Y_0:=U_0$,  $Y_k:=T_s^{h_k}(U_0)$ and  $y_k:=\p(x_k)$. We have
\small
\begin{eqnarray}\label{chain}
&&\hspace{-1.5cm}H_{X_0, X_{h_1},\cdots,X_{h_n}}(x_0,x_1,\cdots,x_n)=\P\big(U_0\geq y_0, Y_1\geq y_1,\cdots,Y_n\geq y_n \big)\nonumber\\
&=&\P\Big(U_0\geq y_0\big|Y_1\geq y_1 ,Y_2\geq y_2,\cdots,Y_n\geq y_n\Big) \P\Big(Y_1\geq y_1 ,\cdots,Y_n\geq y_n\Big)\nonumber\\
&=& \P\Big(Y_1\geq y_1 ,\cdots,Y_n\geq y_n\Big)-\P\big(U_0\leq y_0, Y_1\geq y_1,\cdots,Y_n\geq y_n \big).
\end{eqnarray}
\normalsize
Upon applying a long chain of a conditioning argument on both terms in \eqref{chain}, we arrive at
\small
\begin{eqnarray}\label{chain2}
&&\hspace{-.6cm}H_{X_0, X_{h_1},\cdots,X_{h_n}}(x_0,x_1,\cdots,x_n)=1-\sum_{i=0}^nF_0(y_i)+\sum_{i=0}^n\sum_{j=i+1}^nH_{Y_i,Y_j}(y_i,y_j)+\nonumber\\
&& \hspace{0.7cm}+\cdots+\,(-1)^{n-1}\sum_{k_1=0}^n\sum_{k_2=k_1+1}^n\cdots\!\!\!\sum_{k_{n-1}=k_{n-2}+1}^n\!\!\!\!\!
H_{Y_{k_1},\cdots,Y_{k_{n-1}}}(y_{k_1},\cdots,y_{k_{n-1}})+\nonumber\\
&&\hspace{0.7cm}+\,(-1)^{n}H_{U_0,Y_1,\cdots,Y_n}(y_0,\cdots,y_n).
\end{eqnarray}
\normalsize
A simple calculation (using the $T_s$-invariance of $\mu_s$) shows that, for all $t\!\in\!\N^\ast$ and $x\in (0,1)$,
\small
\[F_{X_t}(x)=1-F_0\big(\p(x)\big) \ \ \ \mbox{ and }\ \ \ \ F_{X_t}^{-1}(x)=\ip\big(F_0^{-1}(1-x)\big), \]
\normalsize
so that, the result follows upon applying Sklar's Theorem to \eqref{chain2} (recall that $y_k=\p(x_k)$).\fim
\end{proof}

\begin{rmk}
Notice that the copula in Proposition \ref{mdec} can be explicitly calculated since \eqref{mrel} is written as sums of the copulas of vectors containing $U_0$ and $T^t(U_0)$ for different $t$'s, so that the desired formulas can be deduced in terms of the copulas in Theorem \ref{mdim}.
\end{rmk}

\section{Numerical Approximations to the MP copulas}
\setcounter{equation}{0}
The MP copulas derived in the last sections do not have readily computable formulas, especially because $\mu_s$ does not have explicit expression and because even apparently simple tasks like determining the discontinuity points of $T^h_s$ or to compute explicit formulas for the branches of $T^h_s$ can be highly complex ones. However, one can still study the copulas derived in the last sections by using appropriate approximations to the functions appearing in the copula expression. Besides the invariant measure $\mu_s$, computation of the bidimensional copulas so far discussed also involves computation of the quantile function $F_0^{-1}$, the inverse of $T^h_s$ and the end points $\{a_{h,k}\}_{k=0}^{2^h}$ of the nodes of $T_s^h$.

In this section our goal is to derive simple approximations to these functions in order to obtain an approximation to the copula itself, which we shall prove to converge uniformly in its arguments to the true copula. The approximations presented here are simple ones, usually a linear interpolation based on a grid of values, but the technique and results we shall use and prove here are stronger and cover a wide range of approximations, for instance, all results hold if we use some type of spline interpolation instead of a linear one. This is so because the functions to be approximated are generally very smooth. We also evaluate the stability and performance of the approximations by simple numerical experiments.
\subsection*{ Approximation to $\mu_s$}
We start with an approximation to $\mu_s$. In this direction there are at least two ways to compute approximations to $\mu_s$. One way is by using the ideas and results outlined in Dellnitz and Junge (1999), which are based on a discretization of the Perron-Frobenius operator by means of a Garlekin projection type approximation in order to compute the eigenvectors of the discretized operator corresponding to the eingenvalue 1. Although it can be used to approximate any SBR measure, the method is especially suited to approximate and study (almost) cyclical behavior of dynamical systems. However, its complexity makes the efficient implementation troublesome. A much simpler idea, which we shall adopt here, is to approximate the measure by truncating equation \eqref{sbr} for a reasonably large value of $n$. That is, we consider the approximating measure
\vskip-1\baselineskip

\small
\begin{equation}\label{app}
\mu_n(A;s,x_0)=\frac{1}{n}\sum_{k=0}^{n-1}\delta_{T^k_s(x_0)}(A)
\end{equation}
\normalsize
which converges in a weak sense to $\mu_s$ as $n$ tends to infinity, for almost all initial points $x_0\in I$ and all $\mu_s$-continuity sets $A$. The iterations of $T_s$ are known to be unstable with respect to the initial point in the sense that, given a small $\eps>0$ and a point $x\in(0,1)$, the trajectories $T^k_s(x)$ and $T_s^k(x+\eps)$ become far apart exponentially fast. The approximation \eqref{app}, however, is quite stable with respect to the initial point $x_0$ for large $n$. For instance, in Figure \ref{f1} we show the measure of the sets $[0.1,0.2]$ and $[0.4,0.6]$ obtained by using $\mu_n(\cdot;s,x_0)$ with $s=0.5$, for 50 different initial points $x_0$ and 3 different truncation points $n\in\{300,000; 1,000,000; 3,000,000\}$.  All plots are in the same scale (within set) in order to make comparison possible. In Table \ref{t1} we show basic statistics related to Figure \ref{f1}. Notice that, in average, the 1,000,000 and 3,000,000 iteration cases are very similar and all cases are fairly stable with respect to the initial points (observe the scale).
\begin{figure}[!h]
\centering
\includegraphics[scale=0.7]{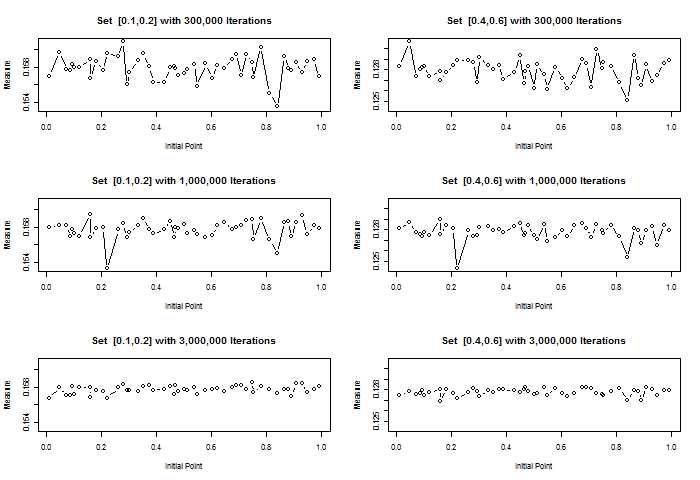}
  \caption{{\small Performance of the approximation \eqref{app} for truncation points $n\in\{300,000; 1,000,000; 3,000,000\}$ (top, middle and bottom, respectively) and 50 different initial points for $s=0.5$. The measured sets are (left) $[0.1,0.2]$ and (right) $[0.4,0.6]$. All plots within the same set are in the same scale.}}\label{f1}
\end{figure}
\begin{table}[htb]
\caption{{\small Summary statistics for the data presented in Figure \ref{f1}.}}\vspace{.3cm}\label{t1}
\centering
{\scriptsize
\begin{tabular}{|c|c|c|c|c|}
\hline
\hline
Set & $n$ &     300,000 &    1,000,000 &    3,000,000 \\
\hline
\hline
\multirow{3}{*}{\begin{sideways} [0.2,0.3] \end{sideways} }&$[\min,\max]$ & [0.12511,0.13067] & [0.12431,0.12901] & [0.12688,0.12825] \\
   &  range &    0.00556 &    0.00470 &    0.00137 \\
      &mean &    0.12790 &    0.12775 &    0.12777 \\
\hline
\hline
\multirow{3}{*}{\begin{sideways} [0.4,0.6] \end{sideways} }&$[\min,\max]$ &  [0.15349,0.16092]  & [0.15326,0.15944] & [0.15676,0.15857] \\
&     range &    0.00743 &    0.00618 &    0.00181 \\
&   mean &    0.15792 &    0.15771 &    0.15771 \\
\hline
\hline
\end{tabular}}
\end{table}

Next question is how good is the approximation \eqref{app}? One way to test this is by testing whether the approximation is invariant under $T_s$. For given initial points, say $x_1,\cdots,x_k$ and some interval $[a,b]$, we calculate $\mu_n\big([a,b];s,x_i\big)$ and $\mu_n\big(T_s^{-1}([a,b]);s,x_j\big)$. If the difference between the two quantities is small for different pairs $(x_i,x_j)$, one can conclude that the approximation is reasonably good. In Table \ref{t2} we present the difference $\big|\mu_n\big([a,b];s,x_i\big)-\mu_n\big(T_s^{-1}([a,b]);s,x_j\big)\big|$ for 7 different initial points and 3 different sets $[a,b]$. The truncation point was taken to be 3,000,000 and  $s=0.5$. From Table \ref{t2} we conclude that the approximation \eqref{app} performs very well in all cases and that it can be taken to be $T_s$-invariant. As expected, when $x_i=x_j$ the differences are the smallest  ($<10^{-8}$ in all cases).
\begin{table}[!h]
\caption{{\small Difference $\big|\mu_n\big([a,b];s,x_i\big)-\mu_n\big(T_s^{-1}([a,b]);s,x_j\big)\big|$ for different values of $x_0$ and sets $[a,b]$. The truncation point was taken to be $n=3,000,000$ and $s=0.5$. The initial points are $(x_1,\cdots,x_7)=\big(\pi,\pi/(\sqrt2+1),\pi\sqrt2,\pi+\sqrt2,\sqrt7,\pi+\sqrt7,\sqrt{11}+\sqrt7\big)(\mathrm{mod}\ 1)$.}}\vspace{.3cm}\label{t2}
\centering
{\tiny
\begin{tabular}{|c|c|c|c|c|c|c|c|c|}
\hline
\hline
           &    initial &     $x_1$ &     $x_2$ &     $x_3$ &     $x_4$ &     $x_5$ &     $x_6$ &     $x_7$ \\
           \hline
           \hline
\multirow{7}{*}{\begin{sideways} [0.05,0.2] \end{sideways} }
           &     $x_1$ &          0.00000 &    0.00019 &    0.00040&    0.00008 &    0.00004 &   0.00062 &    0.00022 \\

           &     $x_2$ &    0.00019 &          0.00000 &    0.00020 &    0.00027 &    0.00024 &   0.00043 &    0.00042 \\

           &     $x_3$ &     0.00040 &     0.00000 &          0.00000 &    0.00047 &    0.00044 &   0.00022 &    0.00062 \\

           &     $x_4$ &   0.00008 &    0.00030 &   0.00047 &          0.00000 &   0.00003 &    0.00070 &    0.00015 \\

           &     $x_5$ &   0.00004 &    0.00020 &   0.00044 &    0.00003 &          0.00000 &   0.00066 &    0.00018 \\

           &     $x_6$ &    0.00062 &    0.00043 &    0.00022 &     0.00070 &    0.00066 &          0.00000 &    0.00084 \\

           &     $x_7$ &   0.00022 &    0.0004 &   0.00062 &   0.00015 &   0.00018 &   0.00084 &          0.00000 \\
           \hline
           \hline

           &    initial &     $x_1$ &     $x_2$ &     $x_3$ &     $x_4$ &     $x_5$ &     $x_6$ &     $x_7$ \\
           \hline
           \hline
\multirow{7}{*}{\begin{sideways} [0.3,0.8] \end{sideways} }
           &     $x_1$ &    0.00000 &    0.00019 &    0.00011 &   0.00009 &    0.00052 &    0.00036 &    0.00155 \\

           &     $x_2$ &   0.00019 &    0.00000 &   0.00008 &   0.00028 &    0.00033 &    0.00016 &    0.00136 \\

           &     $x_3$ &   0.00011 &    0.00008 &    0.00000 &   0.00020 &    0.00041 &    0.00024 &    0.00144 \\

           &     $x_4$ &    0.00009 &    0.00028 &    0.00020 &    0.00000 &    0.00061 &    0.00045 &    0.00164 \\

           &     $x_5$ &   0.00052 &   0.00033 &   0.00041 &   0.00061 &    0.00000 &   0.00016 &    0.00103 \\

           &     $x_6$ &   0.00036 &   0.00016 &   0.00024 &   0.00045 &    0.00016 &    0.00000 &    0.00119 \\

           &     $x_7$ &   0.00155 &   0.00136 &   0.00144 &   0.00164 &   0.00103 &   0.00119 &    0.00000 \\
           \hline\hline

           &    initial &     $x_1$ &     $x_2$ &     $x_3$ &     $x_4$ &     $x_5$ &     $x_6$ &     $x_7$ \\
           \hline
           \hline
\multirow{7}{*}{\begin{sideways} [0.7,0.95] \end{sideways} }

           &     $x_1$           &    0.00000 &    0.00011 &    0.00005 &    0.00012 &    0.00003 &    0.00012 &    0.00089 \\

           &     $x_2$            &    0.00011 &    0.00000 &    0.00016 &    0.00022 &    0.00013 &    0.00022 &    0.00078 \\

           &     $x_3$            &    0.00005 &    0.00016 &    0.00000 &    0.00006 &    0.00003 &    0.00006 &    0.00094 \\

           &     $x_4$            &    0.00012 &    0.00022 &    0.00006 &    0.00000 &    0.00009 &    0.00000 &    0.00100 \\

           &     $x_5$            &    0.00003 &    0.00013 &    0.00003 &    0.00009 &    0.00000 &    0.00009 &    0.00091 \\

           &     $x_6$            &    0.00012 &    0.00022 &    0.00006 &    0.00000 &    0.00009 &    0.00000 &    0.00101 \\

           &     $x_7$            &    0.00089 &    0.00078 &    0.00094 &    0.00100 &    0.00091 &    0.00101 &    0.00000 \\
\hline
\hline
\end{tabular}}
\end{table}

In the remaining of this section we shall assume that $s\in(0,1)$ has been fixed and $x_0\in(0,1)$ has been chosen so that the approximation \eqref{app} converges to $\mu_s$. Since no confusion will arise, we shall drop $s$ and $x_0$ from the notation and write the approximation \eqref{app}, based on a size $n$ iteration vector, just by $\mu_n(\cdot)$.
\subsection*{Approximating $F_0^{-1}$ and  the nodes of $T^h_s$}
\setcounter{footnote}{1}
In order to approximate $F^{-1}_0$, one can use an empirical version based on the same iteration vector from which $\mu_n$ is derived. First we need to define an approximation to $F_0$ from which an approximation to $F_0^{-1}$ will be derived. Let $\widehat{F}_n$ be the empirical distribution based on a size $n$ iteration vector  $\big(x_0,T_s(x_0),\cdots,T^{n-1}_s(x_0)\big)$ and let $x_1,\cdots,x_n$ be the jump points\footnote{by the choice of $x_0$, there will be exactly $n$ jump points.} of $\widehat{F}_n$. Consider the set
$L_n:=\{0=x_0,x_1,\cdots,x_n,x_{n+1}=1\}$. Given $x\in I\backslash L_n$, there exists a $k\in\{0,\cdots,n\}$ such that $x\in(x_k,x_{k+1})$. We define the approximate value of $F_0(x)$, denoted by $F_n(x)$, as the linear interpolation of $x$ between the points $\big(x_{k},\widehat{F}_n(x_{k})\big)$ and $\big(x_{k+1},\widehat{F}_n(x_{k+1})\big)$, that is, we set
\small
\begin{equation}\label{fn}
F_n(x):= \left(\frac{\widehat{F}_n(x_{k+1})-\widehat{F}_n(x_{k})}{x_{k+1}-x_k}\right)x+\frac{\widehat{F}_n(x_{k})x_{k+1}-
\widehat{F}_n(x_{k+1})x_k}{x_{k+1}-x_k}.
\end{equation}
\normalsize
 If $x\in L_n$, we simply define $F_n(x):=\widehat{F}_n(x)$. Notice that, for each $n$, $F_n:I\rightarrow I$ is a one-to-one, increasing and uniformly continuous function, so that its inverse, $F_n^{-1}$, is well defined and is also one-to-one and uniformly continuous. In the next  proposition, we show that $F_n(x)\rightarrow F_0(x)$ and $F_n^{-1}(x)\rightarrow F_0^{-1}(x)$, both limits being uniform in $x$.

        \begin{prop}
        Let $\widehat{F}_n$ be the empirical distribution based on an iteration vector $\big( x_0 , T_s(x_0) ,$ $ \cdots , T^{n-1}_s(x_0) \big)$ and let $x_1,\cdots,x_n$ be the jump points of $\widehat{F}_n$. Let $F_n$ be the approximation \eqref{fn} based on $\{x_1,\cdots,x_n\}$ and $F_n^{-1}$ be its inverse. Then,
        \small
        \[F_n(x)\longrightarrow F_0(x) \ \ \ \mbox{ and } \ \ \  F_n^{-1}(x)\longrightarrow F_0^{-1}(x),\]
        \normalsize
        uniformly in $x$.
        \end{prop}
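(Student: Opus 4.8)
The plan is to prove the uniform convergence of $F_n$ to $F_0$ first, and then deduce the uniform convergence of the inverses from it. For the first part, I would begin by recalling that $\widehat F_n$ is the empirical distribution function built from the iterations $\big(x_0,T_s(x_0),\dots,T_s^{n-1}(x_0)\big)$, so that $\widehat F_n(x)=\tfrac1n\sum_{k=0}^{n-1}\delta_{T_s^k(x_0)}\big([0,x]\big)=\mu_n\big([0,x]\big)$ in the notation of \eqref{app}. Since $[0,x]$ is a $\mu_s$-continuity set for every $x$ (because $\mu_s\ll\lambda$ is non-atomic, so $\mu_s(\{x\})=0=\mu_s(\partial[0,x])$), the SBR property \eqref{sbr} gives, for the chosen $x_0$, the pointwise convergence $\widehat F_n(x)\to F_0(x)$ for every $x\in I$. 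Because $F_0$ is continuous (indeed uniformly continuous on the compact $I$) and the $\widehat F_n$ are monotone, a Pólya-type argument upgrades this to uniform convergence: given $\eps>0$, choose a finite partition $0=y_0<y_1<\dots<y_m=1$ of $I$ with $F_0(y_{j+1})-F_0(y_j)<\eps$ for all $j$, pick $N$ so that $|\widehat F_n(y_j)-F_0(y_j)|<\eps$ for all $j$ and all $n\ge N$, and then sandwich $\widehat F_n(x)$ between $\widehat F_n(y_j)$ and $\widehat F_n(y_{j+1})$ for $x\in[y_j,y_{j+1}]$ to conclude $\sup_x|\widehat F_n(x)-F_0(x)|\le 2\eps$ for $n\ge N$.

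Next I would pass from $\widehat F_n$ to the interpolant $F_n$ defined in \eqref{fn}. On the grid $L_n$ the two functions agree, and on each subinterval $(x_k,x_{k+1})$ the function $F_n$ is the linear interpolation between $\widehat F_n(x_k)$ and $\widehat F_n(x_{k+1})$, hence takes values between those two. Since the jump points $x_1,\dots,x_n$ are precisely the iteration values, the empirical jumps all have size $1/n$, so $\widehat F_n(x_{k+1})-\widehat F_n(x_k)\le 1/n$ on every subinterval, giving $|F_n(x)-\widehat F_n(x)|\le 1/n$ uniformly in $x$. Combining this with the previous step, $\sup_x|F_n(x)-F_0(x)|\le\sup_x|F_n(x)-\widehat F_n(x)|+\sup_x|\widehat F_n(x)-F_0(x)|\le 1/n+2\eps$, which proves $F_n\to F_0$ uniformly.

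For the inverses, I would use the standard fact that uniform convergence of monotone functions transfers to their (generalized) inverses when the limit is continuous and strictly increasing. Concretely: $F_0$ is a strictly increasing continuous bijection of $I$ onto $I$ (its density $h_s$ is positive), so $F_0^{-1}$ is uniformly continuous on $I$; each $F_n$ is a strictly increasing continuous bijection of $I$ onto $I$ with a genuine inverse $F_n^{-1}$. Fix $\eps>0$ and let $\eta=\eta(\eps)>0$ be such that $|F_0(a)-F_0(b)|<\eta$ implies $|a-b|<\eps$, using uniform continuity of $F_0^{-1}$. Choose $N$ with $\sup_x|F_n(x)-F_0(x)|<\eta$ for $n\ge N$. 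Then for any $u\in I$, writing $a=F_n^{-1}(u)$ and $b=F_0^{-1}(u)$, we have $|F_0(a)-u|=|F_0(a)-F_n(a)|<\eta$ and $F_0(b)=u$, so $|F_0(a)-F_0(b)|<\eta$, whence $|F_n^{-1}(u)-F_0^{-1}(u)|=|a-b|<\eps$. As $N$ does not depend on $u$, this gives $F_n^{-1}\to F_0^{-1}$ uniformly.

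The only delicate point is the uniformity in the first step: the SBR convergence \eqref{sbr} is only asserted for a single fixed admissible $x_0$ and gives pointwise convergence in $x$, so the genuine content is the Pólya-type monotone-interpolation argument that promotes pointwise to uniform convergence on the compact interval $I$ using continuity of $F_0$. Everything after that — the $1/n$ bound coming from the equal jump sizes, and the inversion lemma for monotone functions — is routine. I would make sure to state explicitly that the choice of $x_0$ is exactly the one fixed at the end of the "Approximation to $\mu_s$" subsection, so that \eqref{sbr} applies.
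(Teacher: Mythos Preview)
Your proof is correct and follows the same three-step skeleton as the paper: first show $\widehat F_n\to F_0$ uniformly, then bound $|F_n-\widehat F_n|\le 1/n$ via the equal jump sizes, and finally transfer uniform convergence to the inverses by a uniform-continuity argument.

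Two details differ from the paper and are worth noting. First, where you derive the uniform convergence of $\widehat F_n$ explicitly from the SBR pointwise convergence \eqref{sbr} together with a P\'olya-type monotonicity argument, the paper simply invokes the Glivenko--Cantelli theorem. Since the iteration vector is not an i.i.d.\ sample, your route is the more careful one; the paper's citation is really shorthand for exactly the argument you wrote out. Second, and more substantively, for the inverse you use the uniform continuity of the \emph{limit} $F_0^{-1}$, whereas the paper uses the uniform continuity of $F_n^{-1}$ and then chooses $n$ large. Your choice is cleaner: the modulus $\eta(\eps)$ is fixed once and for all, so the threshold $N$ is manifestly independent of $u$, while in the paper's version the modulus $\delta$ a priori depends on $n$, which makes the order of quantifiers slightly awkward. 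Both arguments ultimately work, but yours avoids that wrinkle.
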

\begin{proof}
By the Glivenko-Cantelli theorem, $\widehat{F}_n(x)\rightarrow F_0(x)$ uniformly in $x\in[0,1]$, so that, given $\eps>0$, one can find $n_0:=n_0(\eps)>0$ such that
if $n>n_0$, then $\big|\widehat{F}_n(x)-F_0(x)\big|<\eps$ uniformly in $x$. Now, for $x\in(0,1)$ (if $x$ equals 0 or 1, the result is trivial), there exists a $k\in\{1,\cdots,n\}$ such that $x\in[x_k,x_{k+1})$. Hence, if $n>n_0$
\small
\begin{eqnarray*}
\big|F_n(x)-F_0(x)\big|&\leq&\big|F_n(x)-\widehat{F}_n(x)\big|+\big|\widehat{F}_n(x)-F_0(x)\big|\\
&<&\big|\widehat{F}_n(x_{k+1})-\widehat{F}_n(x_k)\big|+\eps\\
&\leq&\sup_{i=1,\cdots,n-1}\big\{\big|\widehat{F}_n(x_{i+1})-\widehat{F}_n(x_i)\big|\big\}+\eps\\
&\leq&\frac{1}{n}+\eps,
\end{eqnarray*}
\normalsize
uniformly in $x$. To show the convergence of the inverse, let $y\in[0,1]$ and  $\eps>0$ be given and notice that $F_n^{-1}$ being uniformly continuous, one can find a $\delta:=\delta(\eps)>0$ such that
\small
 \[|x-y|<\delta \ \Longrightarrow |F_n^{-1}(x)-F_n^{-1}(y)|< \eps.\]
 \normalsize
 Now, since $F_n$ converges uniformly to $F_0$, there exists $n_1:=n_1(\eps)>0$ such that,
 \small
 \[n>n_1\ \Longrightarrow \big|F_n(x)-F_0(x)\big|<\delta,\]
 \normalsize
for all $x\in I$. Also, since $F_0$ is one to one, there exists $v_0\in[0,1]$ such that $y=F_0(v_0)$. Therefore, if $n>n_1$
\small
\[\big|F_n^{-1}(y)-F_0^{-1}(y)\big|=\big|F_n^{-1}\big(F_0(v_0 )\big)-v_0 \big| =\big|F_n^{-1}\big(F_0(v_0 )\big)-F_n^{-1}\big(F_n(v_0 )\big)\big|<\eps \]\normalsize
and since $n_1$ is independent of $y$, the desired convergence follows.\fim
\end{proof}
As for the end points $\{a_{h,k}\}_{k=0}^{2^h}$ of the nodes of $T^h_s$, let $\{x_1,\cdots,x_m\}\in (0,1)$, $x_i\neq x_j$ and consider the set $\{T^h_s(x_1),\cdots,T^h_s(x_m)\}$, for $m>0$ sufficiently large\footnote{By ``sufficiently large'' we mean that $m$ should be at least large enough to guarantee that the set $\{T^h_s(x_1),\cdots,T^h_s(x_m)\}$ reflects the $2^h-1$ discontinuities of $T^h_s$, or, in other words, $m\geq 2^h$. The limits in $m$ taken for an approximation are understood to be in terms of partitions, that is, we start with a sufficiently large set of points, say $I_m=\{x_1,\cdots,x_m\}$ and consider refinements of the form $I_{m+1}=I_m\cup\{x_{m+1}\},\cdots, I_{m+k}=I_{m+k-1}\cup\{x_{m+k}\}$. Suppose that $R_m:=R(I_m)$ is an approximation based on $I_m$. For a sequence of refinements $\{I_k\}_{k=m+1}^\infty$ we consider the sequence $\{R(I_k)\}_{k=m+1}^\infty$. Whenever the last limit exists, we set $\displaystyle{\lim_{m\rightarrow\infty} R_m= \lim_{k\rightarrow\infty} R(I_k)}$.}. Note that $a_{h,0}=0$ and $a_{h,2^h}=1$, for any $h$. Let $D=\big\{i:T^h_s(x_i)>T^h_s(x_{i+1})\big\}\subset \{1,\cdots,m\}$. The set $D$ contains the indexes $i\in\{1,\cdots,m\}$ for which the interval $[x_i,x_{i+1}]$ contains a discontinuity of $T^h_s$. Let $\{d_j\}_{j=1}^{2^h-1}$ denote the ordered elements of $D$, so that the interval $[x_{d_j},x_{d_{j+1}}]$ contains the $j$-th discontinuity  of $T^h_s$. Now consider the function $T_{i,h;s}^\ast:[x_{d_i},x_{d_i+1}]\rightarrow [0,2]$ given by $T_{i,h}^\ast(x;s):= T_s^{h-1}(x)+\big(T_s^{h-1}(x)\big)^{1+s}$ and notice that we can write
\small
\[T_s^{h}(x)= T_{i,h}^\ast(x;s) -\delta_{[1,2]}\big(T_{i,h}^\ast(x;s)\big).\]
\normalsize
Since there is a discontinuity of $T_s^h$ in the interval $[x_{d_i},x_{d_i+1}]$, we have $T_{i,h}^\ast(x_{d_i};s)\leq1$ and $T_{i,h}^\ast(x_{d_i+1};s)\geq1$ and since $T_{i,h}^\ast$ is continuous and increasing, there exists a point $x\in[x_{d_i},x_{d_i+1}]$ such that $T_s^\ast(x;s)=1$, which is precisely $a_{h,i}$. With this in mind, let $a_{h,i}^m$ denote the approximation to $a_{h,i}$ obtained from $\{x_1,\cdots,x_m\}$ by using a linear interpolation between the points $\big(x_{d_i},T_{i,h}^\ast(x_{d_i};s)\big)$ and $\big(x_{d_i+1},T_{i,h}^\ast(x_{d_i+1};s)\big)$. That is, $a_{h,i}^m$ is given by
\small
\begin{equation}\label{ahkapp}
a_{h,i}^m=x_{d_i} +\frac{x_{d_i+1}-x_{d_i}}{T_{i,h}^\ast(x_{d_i+1};s)-T_{i,h}^\ast(x_{d_i};s)}\,\big(1-T_{i,h}^\ast(x_{d_i};s)\big),
\end{equation}
\normalsize
for all $d_i\in D$. Clearly $a_{h,i}^m\underset{m\rightarrow\infty}{\longrightarrow} a_{h,i}$, since $|x_{d_i+1}-x_{d_i}|\underset{m\rightarrow\infty}{\longrightarrow} 0$ and by the continuity of $T_{i,h}^\ast$, for each $i\in\{1,\cdots,2^h-1\}$.
\subsection*{Approximating $\T_{h,k}$}
Concerning the approximation of $\T_{h,k}$, we shall use an argument based on an empirical inverse and linear interpolation, but we shall also need a doubling argument in order to improve accuracy of the approximation near the discontinuities and guarantee the uniform convergence of the approximation to its target. So let $\{0=x_1,\cdots,x_m=1\}\in I$, $x_i<x_j$ and consider the set $\big\{T^h_s(x_1),\cdots,T^h_s(x_m)\big\}$, for $m>0$ sufficiently large. Given $y\in[0,1]$, recall that the inverse of $T^h_s(y)$ is a size $2^h$ vector which we denoted by $\big(\T_{h,0},\cdots,\T_{h,2^h-1}\big)$.
Let again $D=\big\{i:T^h_s(x_i)>T^h_s(x_{i+1})\big\}\subset \{1,\cdots,m\}$ and $\{d_i\}_{i=1}^{2^h-1}$ be the ordered points in $D$. Suppose that we know exactly or have good estimates for the nodes $\{a_{h,k}\}_{k=0}^{2^h}$ of $T_s^h$ (for instance, we could use $\{a_{h,k}^m\}_{k=0}^{2^h}$, as described before, based on the same set $\{x_1,\cdots,x_m\}$ considered here). For $i=0,\cdots,2^h-1$, let
\small
\[R_{h,i}=\{x_{h,i}^{(1)}, \cdots, x_{h,i}^{(p_i)}\}:=\{a_{h,i}^m,x_{d_i+1},\cdots,x_{d_{i+1}}, a_{h,i+1}^m \}\]
\normalsize
and
\small
\[I_{h,i}=\{y_{h,i}^{(1)}, \cdots, y_{h,i}^{(p_i)}\}:=\big\{0,T^h_s(x_{d_i+1}),\cdots,T^h_s(x_{d_{i+1}}), 1 \big\}.\]
\normalsize
Given $y\in[0,1]$, for each $i=0,\cdots,2^h-1$, there exists a {\small$y_{h,i}^{(k)}\in I_{h,i}$} such that \small$y\in\big[y_{h,i}^{(k)},y_{h,i}^{(k+1)}\big)$\normalsize. We define the approximation $\T_{h,i}^m(y)$ of $\T_{h,i}(y)$, as being the linear interpolation of $y$ between the points $\big(x_{h,i}^{(k)},y_{h,i}^{(k)}\big)$ and \small$\big(x_{h,i}^{(k+1)}\!\!\!,\,\,\,y_{h,i}^{(k+1)}\big)$\normalsize. That is, for each  $i=0,\cdots,2^h-1$,
\small
\begin{equation}\label{tauapp}
\T_{h,i}^m(y)=x_{h,i}^{(k)}+\frac{x_{h,i}^{(k+1)}-x_{h,i}^{(k)}}{y_{h,i}^{(k+1)}-y_{h,i}^{(k)}}\,\big(y-y_{h,i}^{(k)}\big).
\end{equation}
\normalsize
Notice that if $y$ equals 0 or 1, we have $\T_{h,i}^m(y)=\T_{h,i}(y)$. Also, as the partition $\{x_1,\cdots,x_m\}$ increases, $\big|x_{k+1}-x_k\big|\underset{m\rightarrow\infty}{\longrightarrow} 0$  and the uniform continuity of $T_s^h$ clearly implies $\T_{h,i}^m(y)\underset{m\rightarrow\infty}{\longrightarrow} \T_{h,i}(y)$, for each $y\in[0,1]$, for $i=0,\cdots,2^h-1$. More is true: the convergence is actually uniform in $y$, as we show in the next proposition.

        \begin{prop}
        Let $\T_{h,k}^m$ be the approximation of $\T_{h,k}$ given by \eqref{tauapp} based on a partition $R_m$. Then,
        \small
        \[\T_{h,k}^m(y)\longrightarrow\T_{h,k}(y),\]
        \normalsize
        for each $k=0,\cdots,2^h-1$, as $m$ goes to infinity (that is, as the partition gets thinner). Moreover, the convergence is uniform in $y\in[0,1]$.
        \end{prop}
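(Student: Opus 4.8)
The plan is to prove the uniform convergence $\T_{h,k}^m\to\T_{h,k}$ by reducing it, on each node, to the uniform convergence of piecewise-linear interpolants of a fixed uniformly continuous monotone function, and then controlling the mesh of the interpolation nodes. Fix $k\in\{0,\cdots,2^h-1\}$ and recall that $\T_{h,k}$ is the inverse of $T_s^h$ restricted to the node $[a_{h,k},a_{h,k+1})$; it is increasing and, being a branch of the (uniformly continuous) inverse of a strictly monotone continuous-by-parts map on a compact interval, it is itself uniformly continuous on $[0,1]$. The approximation $\T_{h,k}^m$ defined in \eqref{tauapp} is exactly the piecewise-linear function that interpolates $\T_{h,k}$ at the abscissae $I_{h,k}=\{y_{h,k}^{(1)},\cdots,y_{h,k}^{(p_k)}\}$ (note $\T_{h,k}\big(y_{h,k}^{(j)}\big)=x_{h,k}^{(j)}$ up to the approximation of the endpoints $a_{h,k}^m,a_{h,k+1}^m$, which by \eqref{ahkapp} converge to $a_{h,k},a_{h,k+1}$).

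The key steps, in order, are: (i) Given $\eps>0$, use the uniform continuity of $\T_{h,k}$ to produce $\delta>0$ with $|y-y'|<\delta\Rightarrow|\T_{h,k}(y)-\T_{h,k}(y')|<\eps/2$. (ii) Show the mesh of the $y$-partition $I_{h,k}$ tends to $0$ as $m\to\infty$. This is where the doubling argument matters: the interior points of $I_{h,k}$ are the images $T_s^h(x_{d_k+1}),\cdots,T_s^h(x_{d_{k+1}})$, which are the values of $T_s^h$ at the grid points falling strictly inside the node; since $|x_{j+1}-x_j|\to0$ and $T_s^h$ restricted to the node is uniformly continuous, consecutive such images differ by less than any prescribed amount for $m$ large. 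The only delicate gaps are the first and last subintervals, between $0$ and $T_s^h(x_{d_k+1})$ (resp. between $T_s^h(x_{d_{k+1}})$ and $1$): these shrink precisely because $a_{h,k}^m\to a_{h,k}$ and $a_{h,k+1}^m\to a_{h,k+1}$, so the grid point $x_{d_k+1}$ is eventually within $o(1)$ of $a_{h,k}^m$, forcing $T_s^h(x_{d_k+1})$ close to $0$. (iii) For a piecewise-linear interpolant of a monotone function, on any subinterval $[y_{h,k}^{(j)},y_{h,k}^{(j+1)}]$ the interpolant takes values between $\T_{h,k}(y_{h,k}^{(j)})$ and $\T_{h,k}(y_{h,k}^{(j+1)})$, hence for any $y$ in that subinterval, $|\T_{h,k}^m(y)-\T_{h,k}(y)|\le|\T_{h,k}(y_{h,k}^{(j+1)})-\T_{h,k}(y_{h,k}^{(j)})|$. (iv) Combine: choose $m$ large enough that the $y$-mesh is $<\delta$ (step ii) and that the endpoint errors $|x_{h,k}^{(1)}-a_{h,k}|,|x_{h,k}^{(p_k)}-a_{h,k+1}|$ are $<\eps/2$; then by (i) and (iii), $|\T_{h,k}^m(y)-\T_{h,k}(y)|<\eps$ for every $y\in[0,1]$, and the bound is uniform in $y$. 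Since there are finitely many nodes $k=0,\cdots,2^h-1$, taking the maximum of the corresponding thresholds gives uniform convergence for all branches simultaneously.

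The main obstacle is step (ii), and specifically controlling the two boundary subintervals of $I_{h,k}$: away from the node endpoints, uniform continuity of $T_s^h$ on the (closed) node handles everything, but near a discontinuity point $a_{h,k}$ the derivative of $T_s^h$ is finite while near $a_{h,k+1}$ the branch of $\T_{h,k}$ may have a large (though still finite, by the bounded positive Radon-Nikodym density / the structure of $T_s$) slope, so one must argue carefully that replacing the true endpoints by the interpolated $a_{h,k}^m,a_{h,k+1}^m$ and using $x_{d_k+1}$ as the first genuine interior grid point does not leave a non-vanishing gap. This is exactly what the ``doubling'' device in the construction of $R_{h,i}$ and $I_{h,i}$ is designed to handle, and the convergence $a_{h,k}^m\to a_{h,k}$ established via \eqref{ahkapp} closes the argument. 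Everything else is the routine estimate that piecewise-linear interpolation of a uniformly continuous monotone function converges uniformly as the mesh shrinks.
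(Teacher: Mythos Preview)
Your proposal is correct and uses the same sandwich argument as the paper: both $\T_{h,k}^m(y)$ and $\T_{h,k}(y)$ are trapped between consecutive interpolation node values, and the error is bounded by the oscillation over a single subinterval, which shrinks with the mesh. The only organizational difference is that you route the argument through the $y$-mesh (your step~(ii)) before invoking uniform continuity of $\T_{h,k}$, whereas the paper bounds the oscillation directly by the $x$-mesh of $R_m$, since the range-side values $x_{h,k}^{(j)}$ are themselves consecutive partition points; your explicit treatment of the two boundary subintervals is a refinement the paper leaves implicit.
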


\begin{proof}
 Given $\eps>0$, the uniform continuity of $\T_{h,k}$ implies the existence of a $\delta:=\delta(\eps)>0$ such that
\small
\[|x-y|<\delta\Longrightarrow \big|\T_{h,k}(x)-\T_{h,k}(y)\big|<\eps,\]
\normalsize
for all $x\in[0,1]$. Let $R_0=\{0=x_1,\cdots,x_{m_0}=1\}\in I$ for a sufficiently large $m_0\in\N^\ast$ such that
\small
\[\sup_{i=1,\cdots,m_0-1}\big\{|x_{i+1}-x_i\big|\}<\delta.\]
\normalsize
For $m>m_0$, let $R_m=\{x_1^\ast,\cdots,x_m^\ast\}\supset R_0$ be a size $m$ refinement of $R_0$. Given $y\in(0,1)$, for each $i=0,\cdots,2^h-1$, let $\T_{h,i}^m$ be the approximation \eqref{tauapp} based on $R_m$. By construction and since $y\in(0,1)$, it follows that
\small
\[\T_{h,i}\big(x_{h,i}^{(k)}\big)\leq\T_{h,i}^m(y)<\T_{h,i}\big(x_{h,i}^{(k+1)}\big) \ \ \mbox{ and } \ \ \T_{h,i}\big(x_{h,i}^{(k)}\big)\leq\T_{h,i}(y)<\T_{h,i}\big(x_{h,i}^{(k+1)}\big),\]
\normalsize
so that
\small
\begin{eqnarray*}
\big|\T_{h,i}^m(y)-\T_{h,i}(y)\big|&\leq& |\T_{h,i}\big(x_{h,i}^{(k+1)}\big)-\T_{h,i}\big(x_{h,i}^{(k)}\big)|\\
&\leq&\sup_{j=1,\cdots,m-1}\big\{\big|\T_{h,i}(x_{j+1})-\T_{h,i}(x_j)\big|\big\}< \eps,
\end{eqnarray*}
\normalsize
for all $y\in(0,1)$. If $y\in\{0,1\}$, by construction $\T_{h,i}(y)=\T_{h,i}^m(y)$, so that the result follows uniformly for all $y\in[0,1]$, as desired.\fim
\end{proof}

\subsection{Approximating the lag $h$ MP copula}
With these approximations in hand, we can now define the approximation for the copula $C_{X_t,X_{t+h}}$ when $\p$ is almost everywhere increasing given in Proposition \ref{copcr} but in the form \eqref{copmu}. For $(u,v)\in I^2$, $n>0$ and $m\geq2^h$, we set
\small
\begin{align}\label{copapp}
C_{m,n}(u,v;h)&=\bigg(\sum_{k=0}^{n_0^\ast-1}\mu_n\Big(\big[a_{h,k}^m,\T_{h,k}^m\big( F^{-1}_n(v)\big)\big]\Big)\bigg)\delta_{\N^\ast}\!(n_0^\ast) +\nonumber\\
&\hspace{2.1cm}+\mu_n\Big(\big[a_{h,n_0}^m,\min\big\{F_n^{-1}(u),\T_{h,n_0}^m\big( F^{-1}_n(v)\big)\big\}\big]\Big),
\end{align}
\normalsize
where $n_0^\ast:=n_0(m,n)=\big\{k:u\in\big[F_n(a_{h,k}^m),F_n(a_{h,k+1}^m)\big)\big\}$ and $\displaystyle{\lim_{m,n\rightarrow\infty}} n_0^\ast=n_0$ since $F_n$ converges uniformly to $F_0$ and $a_{h,k}^m$ converges to $a_{h,k}$. In the next theorem we establish the convergence of the approximation \eqref{copapp} to the true copula.

        \begin{thm}\label{conv}
        Let $C_{m,n}(u,v;h)$ be given by \eqref{copapp}. Then, for all $(u,v)\in I^2$, $t\geq0$ and $h>0$
        \small
        \[\lim_{n\rightarrow\infty}\lim_{m\rightarrow\infty}C_{m,n}(u,v;h)=\lim_{m\rightarrow\infty}\lim_{n\rightarrow\infty}C_{m,n}(u,v;h)= \lim_{m,n\rightarrow\infty}C_{m,n}(u,v;h)\]
        \normalsize
        and the common limit is $C_{X_t,X_{t+h}}(u,v)$ (given by \eqref{cop}). Furthermore, the limits above are uniform in $(u,v)\in I^2$.
        \end{thm}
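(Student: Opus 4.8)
The plan is to rewrite both the target copula --- in the form \eqref{copmu} --- and the approximation \eqref{copapp} as finite sums of $\mu_s$- resp.\ $\mu_n$-measures of \emph{moving closed intervals}, and then to pass to the limit summand by summand, using the uniform convergences $F_n^{-1}\to F_0^{-1}$, $\T_{h,k}^m\to\T_{h,k}$, $a_{h,k}^m\to a_{h,k}$ and the weak convergence $\mu_n\to\mu_s$ from \eqref{sbr}--\eqref{app}. Since neither \eqref{copapp} nor $C_{X_t,X_{t+h}}$ depends on $t$, and $C_{X_t,X_{t+h}}=C_{X_0,X_h}$ by Proposition \ref{invar}, one may take $t=0$. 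For $k=0,\dots,2^h-1$ set
\[\gamma_k(u,v):=\max\big\{a_{h,k},\,\min\{F_0^{-1}(u),\,\T_{h,k}(F_0^{-1}(v))\}\big\},\]
and let $\gamma_k^{m,n}$ be the same expression with $a_{h,k},F_0^{-1},\T_{h,k}$ replaced by $a_{h,k}^m,F_n^{-1},\T_{h,k}^m$. Using that $\T_{h,k}$ maps $I$ into the $k$-th node $[a_{h,k},a_{h,k+1}]$ and that, by construction \eqref{tauapp}, $\T_{h,k}^m$ maps $I$ into $[a_{h,k}^m,a_{h,k+1}^m]$, a short case analysis --- according to whether $u$ lies below, inside, or above $[F_0(a_{h,k}),F_0(a_{h,k+1}))$, resp.\ its $(m,n)$-analogue --- turns \eqref{copmu} and \eqref{copapp} into
\[C_{X_0,X_h}(u,v)=\sum_{k=0}^{2^h-1}\mu_s\big([a_{h,k},\gamma_k(u,v)]\big),\qquad C_{m,n}(u,v;h)=\sum_{k=0}^{2^h-1}\mu_n\big([a_{h,k}^m,\gamma_k^{m,n}(u,v)]\big);\]
in particular $n_0$ disappears, since at $u=F_0(a_{h,k+1})$ one has $F_0^{-1}(u)=a_{h,k+1}\ge\T_{h,k}(F_0^{-1}(v))$, so the two neighbouring formulas for $\gamma_k$ agree there.

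Next, each $\gamma_k$ is uniformly continuous on $I^2$ (a composition of the uniformly continuous maps $F_0^{-1}$ and $\T_{h,k}\circ F_0^{-1}$ with the $1$-Lipschitz operations $\min,\max$ and a constant), and
\[\big|\gamma_k^{m,n}(u,v)-\gamma_k(u,v)\big|\le|a_{h,k}^m-a_{h,k}|+\|F_n^{-1}-F_0^{-1}\|_\infty+\|\T_{h,k}^m-\T_{h,k}\|_\infty+\omega_{\T_{h,k}}\!\big(\|F_n^{-1}-F_0^{-1}\|_\infty\big),\]
with $\omega_{\T_{h,k}}$ the modulus of continuity of $\T_{h,k}$, shows $\gamma_k^{m,n}\to\gamma_k$ uniformly on $I^2$ as $m,n\to\infty$, and likewise for the two iterated passages (write $\gamma_k^m$, resp.\ $\gamma_k^n$, for the limit obtained by sending only $n$, resp.\ only $m$, to infinity). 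The analytic core is then a moving-interval lemma: if $p_\ell\le q_\ell$ are $I$-valued functions on $I^2$ converging uniformly to uniformly continuous $p\le q$, then $\mu_n([p_\ell,q_\ell])\to\mu_s([p,q])$ uniformly on $I^2$ --- where here $\mu_n$ may equally be a sequence of empirical measures converging to $\mu_s$, or $\mu_s$ itself, or a fixed empirical measure. One writes $\mu_n([p_\ell,q_\ell])=\widehat F_n(q_\ell)-\widehat F_n(p_\ell)+\mu_n(\{p_\ell\})$ and $\mu_s([p,q])=F_0(q)-F_0(p)$ (valid since $\mu_s$ is non-atomic), and bounds the difference by $|\widehat F_n(q_\ell)-F_0(q)|+|\widehat F_n(p_\ell)-F_0(p)|+\mu_n(\{p_\ell\})$: the first two terms are each $\le\|\widehat F_n-F_0\|_\infty+\omega_{F_0}(\|q_\ell-q\|_\infty)$, which go to $0$ uniformly by Glivenko-Cantelli (cf.\ the proof above) and the uniform continuity of $F_0$, while $\mu_n(\{p_\ell\})$ is at most the largest jump of $\widehat F_n$, which for every $\eta>0$ is $\le2\|\widehat F_n-F_0\|_\infty+\omega_{F_0}(\eta)$, hence has $\limsup_n$ equal to $0$. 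Applying this to the $2^h$ summands gives $\lim_{m,n\to\infty}C_{m,n}(u,v;h)=C_{X_0,X_h}(u,v)$ uniformly in $(u,v)$; moreover, for fixed $m$, $\lim_{n\to\infty}C_{m,n}(u,v;h)=\sum_k\mu_s([a_{h,k}^m,\gamma_k^m(u,v)])$ uniformly, and this converges uniformly to $C_{X_0,X_h}$ as $m\to\infty$ since $\mu_s$ has a bounded density, which settles $\lim_m\lim_n$.

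For $\lim_n\lim_m$ one uses a Moore-Osgood type principle: once the inner limit $b_n(u,v):=\lim_{m\to\infty}C_{m,n}(u,v;h)$ exists pointwise, the uniform double limit forces $b_n\to C_{X_0,X_h}$ uniformly, since $\sup_{I^2}|C_{m,n}-C_{X_0,X_h}|<\eta$ for $m,n\ge N(\eta)$ gives $\sup_{I^2}|b_n-C_{X_0,X_h}|\le\eta$ for $n\ge N(\eta)$. The main obstacle is precisely the existence of this inner limit: with $n$ fixed, $\mu_n$ is a \emph{fixed atomic} measure while the endpoints $a_{h,k}^m,\gamma_k^{m,n}(u,v)$ move, so an atom of $\mu_n$ sitting at an endpoint can make a summand oscillate by $1/n$. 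This is handled by first discarding a $\lm$-null set of initial points $x_0$ so that the trajectory $\{T_s^j(x_0)\}_{j\ge0}$ avoids the nodes $a_{h,0},\dots,a_{h,2^h}$: then for large $m$ the interval $[a_{h,k}^m,\gamma_k^{m,n}(u,v)]$ differs (as a set) from $[a_{h,k},\gamma_k^n(u,v)]$ only by points in shrinking neighbourhoods of its endpoints, none of which carries $\mu_n$-mass except possibly $\gamma_k^n(u,v)$ itself, so that $\limsup_m C_{m,n}-\liminf_m C_{m,n}\le 2^h/n$. Hence both $\limsup_m C_{m,n}$ and $\liminf_m C_{m,n}$ converge --- uniformly, by the moving-interval lemma applied to $\sum_k\mu_n([a_{h,k},\gamma_k^n(u,v)])$ --- to $C_{X_0,X_h}$ as $n\to\infty$, so $\lim_n\lim_m C_{m,n}$ exists in the squeezed sense and equals $C_{X_0,X_h}$, uniformly in $(u,v)$. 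All remaining estimates are routine, being dominated by the three moduli of continuity above.
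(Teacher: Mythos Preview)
Your argument is correct and proceeds along a genuinely different route from the paper's. The paper isolates a general lemma (Lemma~\ref{lfund}) about $\mu_n\big([a_m,g_m(f_n(v))]\big)\to\mu\big([a,g(f(v))]\big)$ and proves it by dominated convergence, then patches in the $\min$ term separately; the floating index $n_0^\ast$ is left in the formula and its convergence to $n_0$ is asserted from the uniform convergence of $F_n$ and $a_{h,k}^m$. Your rewriting via $\gamma_k(u,v)=\max\{a_{h,k},\min\{F_0^{-1}(u),\T_{h,k}(F_0^{-1}(v))\}\}$ absorbs both the $\min$ term and the index $n_0$ into a single $2^h$-term sum, which is cleaner and avoids having to argue separately about the convergence $n_0^\ast\to n_0$. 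Your moving-interval lemma, based on $\widehat F_n$, Glivenko--Cantelli and moduli of continuity, is more elementary than the paper's dominated-convergence argument and gives the uniform bound directly. Most notably, your treatment of the $\lim_n\lim_m$ order is more honest than the paper's: the paper applies dominated convergence to $\int\delta_{S_{m,n}}\,\mathrm d\mu_n\to\int\delta_{S_n}\,\mathrm d\mu_n$ without checking that the indicator converges $\mu_n$-a.e., which can fail precisely when an atom of the \emph{fixed} empirical $\mu_n$ sits on $\partial S_n$. You confront this directly, discard a null set of initial points so that the orbit avoids the node endpoints, and then squeeze via $\limsup_m C_{m,n}-\liminf_m C_{m,n}\le 2^h/n$. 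Strictly speaking, this yields the iterated limit only in the squeezed sense (the inner limit $\lim_m C_{m,n}$ need not exist for every fixed $n$ at the exceptional $(u,v)$ where $\gamma_k^n(u,v)$ hits an atom of $\mu_n$), but that caveat applies equally to the paper's own argument, and what matters for the application --- the uniform convergence to $C_{X_t,X_{t+h}}$ --- is fully established.
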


The proof of Theorem \ref{conv}, is a consequence of the following stronger lemma.

        \begin{lema}\label{lfund}
        Let $\{\mu_n\}_{n\in\N}$ be a sequence of  probability measures defined in $I$ such that $\mu_n\overset{w}{\longrightarrow}\mu$. Let $f_n:I\rightarrow I$ be a sequence of continuous functions converging uniformly to a function $f:I\rightarrow I$. Let $\{a_m\}_{m\in\N}$ be a sequence of real numbers such that $a_m\in[0,1]$ for all $m$ and $a_m\rightarrow a$. Also let $g_m:[a_m,1]\rightarrow I$ be a sequence of continuous functions converging uniformly to a function $g:I\rightarrow I$, $S_{m,n}(v):=\big[a_m,g_m\big(f_n(v)\big)\big]$ and $S(v):= \big[a,g\big(f(v)\big)\big]$. Then,
        \small
        \[\lim_{m\rightarrow\infty}\lim_{n\rightarrow\infty}\!\mu_n\big(S_{m,n}(v)\big)=\!\!\lim_{n\rightarrow\infty}
        \lim_{m\rightarrow\infty}\!\mu_n\big(S_{m,n}(v)\big)=\!\!\lim_{m,n\rightarrow\infty}\!\mu_n\big(S_{m,n}(v)\big)=\mu\big(S(v)\big)\]
        \normalsize
        uniformly in $v\in I$.
        \end{lema}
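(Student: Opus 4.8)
The plan is to isolate the only two properties of $\{\mu_n\}$ and $\mu$ that are actually needed and then run a sandwiching argument on the intervals $S_{m,n}(v)$. Writing $F_n(x):=\mu_n\big([0,x]\big)$ and $F(x):=\mu\big([0,x]\big)$, weak convergence gives $F_n\to F$ at every continuity point of $F$, and since $\mu$ is non-atomic (in the situation of interest $\mu=\mu_s\ll\lambda$, cf.\ Section 3) the function $F$ is continuous on $I$, so $F_n\to F$ pointwise everywhere; as the $F_n$ are non-decreasing and $F$ is continuous on the compact interval $I$, this convergence is automatically uniform (the elementary P\'olya-type fact), say $\|F_n-F\|_\infty=:\eps_n\to0$. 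From this one reads off, first, that $\big|\mu_n(J)-\mu(J)\big|\le2\eps_n$ for \emph{every} subinterval $J\subseteq I$ (whether or not it is a $\mu$-continuity set), and second, that $F$ being uniformly continuous there is a modulus $\omega$ with $\mu\big(\{x:|x-x_0|\le\eta\}\big)\le\omega(\eta)\to0$ as $\eta\downarrow0$, uniformly in $x_0\in I$. These are the only facts about the measures I would use.

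Next I would control the varying right endpoint. Since $g_m\to g$ and $f_n\to f$ uniformly and $g$ is uniformly continuous on $I$, one has $\big|g_m(f_n(v))-g(f(v))\big|\le\|g_m-g\|_\infty+\omega_g\big(\|f_n-f\|_\infty\big)\to0$ as $m,n\to\infty$, \emph{uniformly in $v$}, and freezing either index gives the corresponding one-index versions ($g_m(f_n(v))\to g_m(f(v))$ as $n\to\infty$ for fixed $m$, and $g_m(f_n(v))\to g(f_n(v))$ as $m\to\infty$ for fixed $n$). Together with $a_m\to a$ this says: for every $\eta>0$ there is a threshold beyond which, for all $v$, $S_{m,n}(v)$ is squeezed between two closed subintervals of $I$ whose endpoints lie within $\eta$ of $a$ and of $g(f(v))$ — from outside and from inside respectively, the inner one being empty when $g(f(v))-a<2\eta$ (which in particular covers the degenerate case $g(f(v))=a$, i.e.\ $\mu(S(v))=0$).

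The double limit is then immediate: applying monotonicity of $\mu_n$ to the sandwich, then the first fact, then the second fact with $\eta$ small, yields $\sup_{v\in I}\big|\mu_n(S_{m,n}(v))-\mu(S(v))\big|\le2\eps_n+2\omega(\eta)$ for all $m,n$ beyond the $\eta$-dependent threshold; letting $m,n\to\infty$ and then $\eta\downarrow0$ gives $\lim_{m,n\to\infty}\mu_n(S_{m,n}(v))=\mu(S(v))$, uniformly in $v$. For the order $\lim_{m\to\infty}\lim_{n\to\infty}$ I would fix $m$: the first fact together with $g_m(f_n(v))\to g_m(f(v))$ uniformly gives $\lim_{n\to\infty}\mu_n(S_{m,n}(v))=\mu\big([a_m,g_m(f(v))]\big)$ uniformly in $v$, and then $m\to\infty$ combined with the second fact returns $\mu(S(v))$, again uniformly.

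The remaining order $\lim_{n\to\infty}\lim_{m\to\infty}$ is the delicate one, and I expect it to be the main obstacle. For fixed $n$ the measure $\mu_n$ may be purely atomic — in the application it is $\frac1n\sum_{k=0}^{n-1}\delta_{T_s^k(x_0)}$ — so $\lim_{m\to\infty}\mu_n(S_{m,n}(v))$ need not even exist: the sandwich only delivers $\mu_n\big((a,g(f_n(v)))\big)\le\liminf_{m}\mu_n(S_{m,n}(v))\le\limsup_{m}\mu_n(S_{m,n}(v))\le\mu_n\big([a,g(f_n(v))]\big)$, the two bounds differing by the (possibly positive) $\mu_n$-mass of the two points $a$ and $g(f_n(v))$. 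The resolution is that this residual mass is harmless once $n\to\infty$: by the first fact and the non-atomicity of $\mu$, both bounds converge to $\mu\big((a,g(f(v)))\big)=\mu\big([a,g(f(v))]\big)=\mu(S(v))$, uniformly in $v$, which is the only sensible meaning of the iterated limit here. Assembling the three cases finishes the argument; the two points that genuinely need care are this atom phenomenon and the uniformity in $v$, and it is precisely to secure the latter that one must upgrade weak convergence to uniform convergence of the $F_n$ and use uniform continuity of $g$ rather than settle for pointwise statements.
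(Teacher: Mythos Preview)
Your approach is correct and overlaps with the paper's in its essentials---both hinge on P\'olya's theorem to upgrade weak convergence of $\mu_n$ to uniform convergence of the distribution functions $F_n$, and both control the endpoints of $S_{m,n}(v)$ via the uniform convergence of $g_m$ and $f_n$---but your execution is more elementary and, at one point, more careful.

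The paper frames the iterated limits through the Lebesgue and dominated convergence theorems (writing $\mu_n(S_{m,n})=\int\delta_{S_{m,n}}\,\mathrm{d}\mu_n$ and passing to the limit under the integral), and for the double limit it invokes the bounded Radon--Nikodym density of $\mu$ to get $\mu(A)\le M\lambda(A)$, then builds $\eps/(10M)$-neighbourhoods $K_1,K_2$ of the two endpoints and assembles a multi-term estimate. Your route replaces all of this with two clean facts extracted directly from $\|F_n-F\|_\infty\to0$: (i) $|\mu_n(J)-\mu(J)|\le2\eps_n$ for \emph{every} subinterval $J$, and (ii) a uniform modulus $\omega$ for short $\mu$-intervals coming merely from continuity of $F$ (so you need only non-atomicity of $\mu$, not a bounded density). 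The resulting bound $2\eps_n+2\omega(\eta)$ is tidier than the paper's.

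Where you are actually more careful is the order $\lim_{n}\lim_{m}$: you correctly observe that for fixed $n$ the measure $\mu_n$ may be purely atomic (indeed $\frac1n\sum_k\delta_{T_s^k(x_0)}$ in the intended application), so $\lim_m\mu_n(S_{m,n}(v))$ need not exist, and you resolve this via $\liminf/\limsup$ bounds that coalesce once $n\to\infty$. The paper's appeal to dominated convergence at this step tacitly assumes $\delta_{S_{m,n}}\to\delta_{S_n}$ $\mu_n$-a.e., which can fail precisely at the endpoints when $\mu_n$ charges them; your formulation is the honest one.
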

\begin{proof}
For all $m,n>0$ and $v\in[0,1]$, let $S_{m,n}(v)$ and $S(v)$ be as in the enunciate and let
\small
\[S_{n}(v):=\big[a,g\big(f_n(v)\big)\big]\quad\mbox{ and }\quad S_{m}(v):=\big[a_m,g_m\big(f(v)\big)\big] .\]
\normalsize
Notice that all sets just defined are $\mu$-continuity sets for all $m$, $n$ and $v$.
Since the convergence of $f_n$ to $f$ is uniform, we have
\small
\[\lim_{m,n\rightarrow\infty}g_m\big(f_n(v)\big)=\lim_{n\rightarrow\infty}\lim_{m\rightarrow\infty}g_m\big(f_n(v)\big)=\lim_{m\rightarrow\infty} \lim_{n\rightarrow\infty}g_m\big(f_n(v)\big)=g\big(f(v)\big)\]
\normalsize
for all $v$, so that, both, the iterated and the double limits exist and $S_{m,n}(v)\rightarrow S(v)$, for all $v\in[0,1]$. Also notice that we have $\I_{S_{m,n}}(x)\leq\I_I(x)$ uniformly in $m$, $n$ and $x$, and since $\mu_n$ converges weakly to $\mu$ and $I$ is a $\mu$-continuity set, it follows that
\small
\[\int\I_I(x)\mathrm{d}\mu_n\longrightarrow\int\I_I(x)\mathrm{d}\mu.\]
\normalsize
Now, in one hand, since $S_{m,n}(v)\rightarrow S_m(v)$ for all $v$ and $\delta_{S_{m,n}}\leq\delta_I$, by the Lebesgue convergence theorem, it follows that
\small
\[ \mu_n\big(S_{m,n}(v)\big)=\int \I_{S_{m,n}}(x) \mathrm{d}\mu_n\underset{n\rightarrow\infty}{\longrightarrow}
\int \I_{S_m}(x) \mathrm{d}\mu,\]
\normalsize
and, since $\delta_{S_m}\leq\delta_I$ and $\int\delta_I \mathrm{d}\mu<\infty$, by the Lebesgue dominated theorem, we conclude that
\small
\[\int \I_{S_m}(x) \mathrm{d}\mu\underset{m\rightarrow\infty}{\longrightarrow}\int \I_S(x) \mathrm{d}\mu =\mu\big(S(v)\big),\]
\normalsize
which shows that $\displaystyle{\lim_{m\rightarrow\infty}\lim_{n\rightarrow\infty}}\mu_n\big(S_{m,n}(v)\big)=\mu\big(S(v)\big)$
and the convergence holds uniformly in $v\in(0,1)$.
On the other hand, since $\delta_{S_{m,n}}\leq\delta_I$ and $\int \delta_I\mathrm{d}\mu_n<\infty$, by the Lebesgue dominated theorem, it follows that
\small
\[ \mu_n\big(S_{m,n}(v)\big)=\int \I_{S_{m,n}}(x) \mathrm{d}\mu_n\underset{m\rightarrow\infty}{\longrightarrow}
\int \I_{S_n}(x) \mathrm{d}\mu_n,\]
\normalsize
and, since $\delta_{S_n}\leq\delta_I$ and $\int \delta_I \mathrm{d}\mu_n \rightarrow\int \delta_I \mathrm{d}\mu$, by the Lebesgue convergence theorem we conclude that,
\small
\[\int \I_{S_n}(x) \mathrm{d}\mu_n\underset{n\rightarrow\infty}{\longrightarrow}\int \I_S(x) \mathrm{d}\mu =\mu\big(S(v)\big),\]
\normalsize
that is, $\displaystyle{\lim_{n\rightarrow\infty}\lim_{m\rightarrow\infty}}\mu_n\big(S_{m,n}(v)\big)=\mu\big(S(v)\big)$,
which also holds uniformly in $v$. Since the iterated limits are established, in order to finish the proof we need to show that the double limit exists and is equal to the iterated ones. Let $\eps>0$ be given. Since $\mu\ll\lambda$, the Radon-Nikodym theorem implies the existence of a non-negative continuous function $h$, which will be bounded since we are restricted to the interval $I$, such that, for any $A\in\mathcal{B}(I)$,
\small
\[\mu(A)=\int_A h(x)\mathrm{d}\lambda\leq M\lambda(A),\]
\normalsize
where \small$M=\displaystyle{\sup_{x\in I}\{h(x)\}}<\infty$. \normalsize Now, since $a_m\rightarrow a$, one can find $m_1:=m_1(\eps)>0$ such that, if $m>m_1$,
\small
\[a_m\in K_1(\eps):=\left[a-\frac{\eps}{10M},a+\frac{\eps}{10M}\right]\]
\normalsize
and
\small
\[\mu\big(K_1(\eps)\big)\leq M\lambda\left(\left[a-\frac{\eps}{10M},a+\frac{\eps}{10M}\right]\right)=\frac{\eps}{5}.\]
\normalsize
The uniform convergence of $g_m$ to $g$ implies the existence of $m_2:=m_2(\eps)>0$ such that, if $m>m_2$, $|g_m(x)-g(x)|<\eps/20M$, for all $x\in I$, or equivalently, taking $x=f_n(v)$, if $m>m_2$
\small
\[g_m\big(f_n(v)\big)\in \left[g\big(f_n(v)\big)-\frac{\eps}{20M}, g\big(f_n(v)\big)+\frac{\eps}{20M}\right].\]
\normalsize
Now, the uniform continuity of $g$ implies the existence of a $\delta:=\delta(\eps)>0$ such that
\small
\[\big|x-f_n(v)\big|<\delta \ \ \Longrightarrow \ \ \big|g(x)-g\big(f_n(v)\big)\big|<\frac{\eps}{20M}.\]
\normalsize
But since $f_n$ converges to $f$ uniformly, there exists a $n_1=n_1(\delta)>0$ such that
\small
\[n>n_1\ \ \Longrightarrow\ \ \big|f_n(v)-f(v)\big|<\delta,\]
\normalsize
for all $v$ so that, taking $x=f(v)$, for $n>n_1$, we have
\small
\[g\big(f_n(v)\big)\in\Big[g\big(f(v)\big)-\frac{\eps}{20M}, g\big(f(v)\big)+\frac{\eps}{20M}\Big],\]
\normalsize
for all $v\in I$. Hence, if we take $m>m_2$ and $n>n_1$,
\small
\[g\big(f_n(v)\big)-\frac{\eps}{20M}\in\left[g\big(f(v)\big)-\frac{\eps}{10M},g\big(f(v)\big)\right]\]
\normalsize
and
\small
\[g\big(f_n(v)\big)+\frac{\eps}{20M}\in\left[g\big(f(v)\big),g\big(f(v)\big)+\frac{\eps}{10M}\right]\]
\normalsize
so that,  setting
\small
\[K_2(\eps):= \left[g\big(f(v)\big)-\frac{\eps}{10M}, g\big(f(v)\big)+\frac{\eps}{10M}\right],\]
\normalsize
for $m>m_2$ and $n>n_1$, it follows that
\small
\[g_m\big(f_n(v)\big)\in\left[g\big(f_n(v)\big)-\frac{\eps}{20M}, g\big(f_n(v)\big)+\frac{\eps}{20M}\right]\subseteq K_2(\eps),\]
\normalsize
for all $v\in I$. Also observe that
\small
\[\mu\big(K_2(\eps)\big)\leq M\lambda\left(\left[g\big(f(v)\big)-\frac{\eps}{10M}, g\big(f(v)\big)+\frac{\eps}{10M}\right]\right)\leq \frac{\eps}{5}.\]
\normalsize
 The convergence of $\mu_n$ to $\mu$ implies the existence of $n_2:=n_2(\eps)>0$ such that if $n>n_2$ ($K_i(\eps)$ is a $\mu-$continuity set)
 \small
\[\big|\mu_n\big(K_i(\eps)\big)-\mu\big(K_i(\eps)\big)\big|< \frac{\eps}{5},\]
\normalsize
for $i=1,2$.
Also, if we set $F_n(x)=\mu_n\big([0,x]\big)$ and $F_0(x)=\mu\big([0,x]\big)$, then $F_0$ is continuous (since $\mu\ll\lambda$), $F_n\rightarrow F_0$, and, by Pólya's theorem, there exists a $n_3:=n_3(\eps)>0$ such that, if $n>n_3$
\small
\[\sup_{x\in I}\big\{\big|F_n(x)-F_0(x)\big|\big\}<\frac{\eps}{10}.\]
\normalsize
Now, notice that, if $n>n_3$
\small
\begin{eqnarray}
\big|\mu_n\big(S(v)\big)-\mu\big(S(v)\big)\big|\hspace{-.2cm}&\leq&\hspace{-.2cm}
\Big|F_n\big(g\big(f(v)\big)\big)-F_0\big(g\big(f(v)\big)\big)\Big|+\big|F_n(a)-F_0(a)\big|\nonumber\\
&\leq&\!\!2\sup_{x\in I}\big\{\big|F_n(x)-F_0(x)\big|\big\}<\frac{\eps}{5},\nonumber
\end{eqnarray}
\normalsize
for all $v\in I$. Observe further that, by construction, if $m>\max\{m_1,m_2\}$ and $n>n_1$,
\small
\[S_{m,n}(v)\backslash S(v)\subset K_1(\eps)\cup K_2(\eps),\]
\normalsize
for all $v$ so that, setting $n_0=n_0(\eps):=\max\{m_1,m_2,n_1,n_2,n_3\}$, if $m,n>n_0$, we have
\small
\begin{eqnarray*}
\big|&&\hspace{-1cm}\mu_n\big(S_{m,n}(v)\big)-\mu\big(S(v)\big)\big|\leq\\
&\leq&\!\!\!\big|\mu_n\big(S_{m,n}(v)\big)-\mu_n\big(S(v)\big)\big|+ \big|\mu_n\big(S(v)\big)-\mu\big(S(v)\big)\big|\\
&<&\!\!\!\big|\mu_n\big(K_1(\eps)\big)+\mu_n\big(K_2(\eps)\big)\big|+\frac{\eps}{5}\\
&\leq&\!\!\!\big|\mu_n\big(K_1(\eps)\big)\!-\!\mu\big(K_1(\eps)\big)\big|\!+\!\mu\big(K_1(\eps)\big)\!+\!\mu\big(K_2(\eps)\big)\!+\!
\big|\mu\big(K_2(\eps)\big)\!-\!\mu_n\big(K_2(\eps)\big) \big|\!+\!\frac{\eps}{5}\\
&<&\!\!\!\eps,
\end{eqnarray*}
\normalsize
for all $v$, which implies the existence of the double limit, equality with the iterated ones and the desired uniform convergence.\fim
\end{proof}

\noindent \emph{\textbf{Proof of Theorem \ref{conv}}:} First notice that taking $f_n=F_n^{-1}$, $g_m=\T_{h,k}^m$, $a_m=a_{h,k}^m$, it follows from Lemma \ref{lfund} that
\small
\[\mu_n\Big(\big[a_{h,k}^m,\T_{h,k}^m\big(F^{-1}_n(v)\big)\big]\Big)\underset{m,n\rightarrow\infty}{\longrightarrow}
\mu\Big(\big[a_{h,k},\T_{h,k}\big( F^{-1}_0(v)\big)\big]\Big),\]
\normalsize
for each $k=0,\cdots,n_0-1$. It remains to show that
\small
\[\lim_{m,n\rightarrow\infty}\mu_n\Big(\big[a_{h,n_0}^m,\min\big\{F_n^{-1}(u),\T_{h,n_0}^m\big( F^{-1}_n(v)\big)\big\}\big]\Big)=\mu\Big(\big[a_{h,n_0},\min\big\{F_0^{-1}(u),\T_{h,n_0}\big( F^{-1}_0(v)\big)\big\}\big]\Big),\]
\normalsize
and that the iterated limits exist and are equal to the double limit. First, since we can write $\min\{u,v\}=\frac{u+v}{2}-\frac{|u-v|}{2}$, it is routine to show that if  $f_n\rightarrow f$ uniformly, with $f_n$ and $f$ uniformly continuous and $g_m\rightarrow g$ uniformly, with $g_m$ and $g$ uniformly continuous, we have $\min\big\{f_n(u),g_m\big(f_n(v)\big)\big\}$ converging uniformly to $\min\big\{f(u),g\big(f(v)\big)\big\}$ in $n$, $m$, $u$ and $v$. So, the problem simplifies to show that if $a_m\rightarrow a$, $g_{m,n}(u,v)$ is a sequence of functions such that $g_{m,n}(u,v)\rightarrow g(u,v)$ uniformly in $u,v,n,m$ and $a_m\leq g_{m,n}(u,v)$ for all  $u,v,n,m$ and $\mu_n\overset{w}{\longrightarrow}\mu$, then
\small
\[\lim_{m,n\rightarrow\infty}\mu_n\big([a_m,g_{m,n}(u,v)]\big)=\mu\big([a,g(u,v)]\big),\]
\normalsize
uniformly in $u$ and $v$ and the double limit above is equal to the iterated limits. A similar argument to the one used in Lemma \ref{lfund} to establish the existence and equality of the iterated limits can be used to show the existence and equality of the iterated limits in this case. As for the double limit, let $M$ be as in the proof of Lemma \ref{lfund}. By the uniform convergence of $g_{m,n}(u,v)$ to $g(u,v)$ and since $g_{m,n}$ and $g$ are uniformly continuous for all $m,n$, it follows that there exists $m_1:=m_1(\eps)>0$, depending on $\eps$ only, such that, if $m,n>m_1$,
\small
\[g_{m,n}(u,v)\in K(\eps):=\left[g(u,v)-\frac{\eps}{10M}, g(u,v) +\frac{\eps}{10M}\right],\]
\normalsize
for all $u$ and $v$ and $\mu\big(K(\eps)\big)\leq \eps/5$. The rest of the proof is carried out by mimicking the proof of Lemma \ref{lfund} with the obvious adaptations. Identification of $g_{m,n}(u,v)$, $g(u,v)$, $a_m$ and $a$ with $\min\big\{F_n^{-1}(u),\T_{h,n_0}^m\big( F^{-1}_n(v)\big)\big\}$, $\min\big\{F_0^{-1}(u),\T_{h,n_0}\big( F^{-1}_0(v)\big)\big\}$, $a_{h,n_0}^m$ and $a_{h,n_0}$, respectively, completes the proof.\fim
    \begin{rmk}
    Notice that neither the convergence proved in Lemma \ref{lfund} nor the one in Theorem \ref{conv} is uniform in $m$ and $n$.
    \end{rmk}
As for the case when $\p$ is almost everywhere decreasing, we observe that, in view of \eqref{mm}, the function
\small
\[C_{m,n}^\ast(u,v;h)=u+v-1+C_{m,n}(1-u,1-v;h)\]
\normalsize
is an approximation to the copula in \eqref{copb}. Clearly $C_{m,n}^\ast$ converges to the true copula as $m$ and $n$ tends to infinity (view either as an iterated or a double limit) and the convergence is uniform in $(u,v)$.
\subsection*{Implementation and Random Variate Generation}
 The implementation of the approximations so far discussed is routine. All the approximations we mentioned can share the same iteration vector, which further improves the efficiency and precision of the task and greatly reduces the computational burden. In the top panel of Figure \ref{f2} we show the three dimensional plot of the lag 1 and 2 MP copula for values of $s\in\{0.1,0.4\}$. The respective level plots are shown in the bottom panel of Figure \ref{f2}. Notice the non-exchangeability of the copulas in all cases.
\begin{figure}[!h]
\centering
\mbox{
\includegraphics[width=0.22\textwidth]{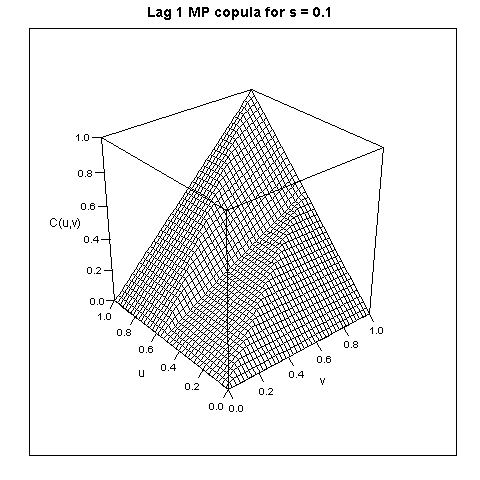}\hskip.3cm
\includegraphics[width=0.22\textwidth]{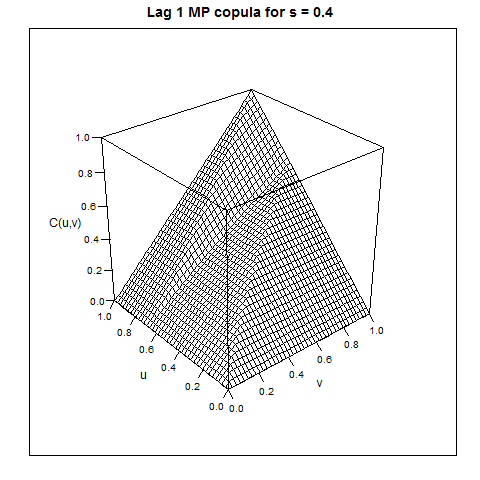}\hskip.3cm
\includegraphics[width=0.22\textwidth]{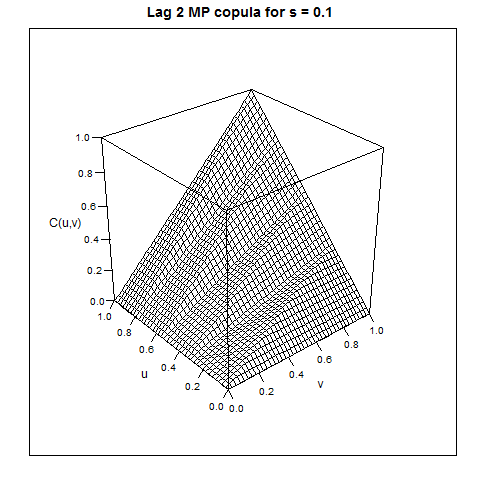}\hskip.3cm
\includegraphics[width=0.22\textwidth]{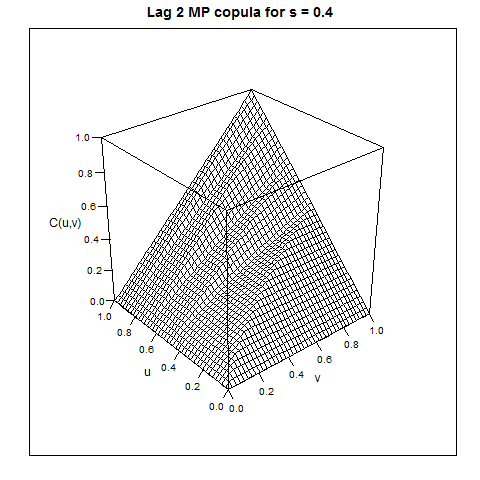}
  }
  \vskip.2cm
  \mbox{
\includegraphics[width=0.22\textwidth]{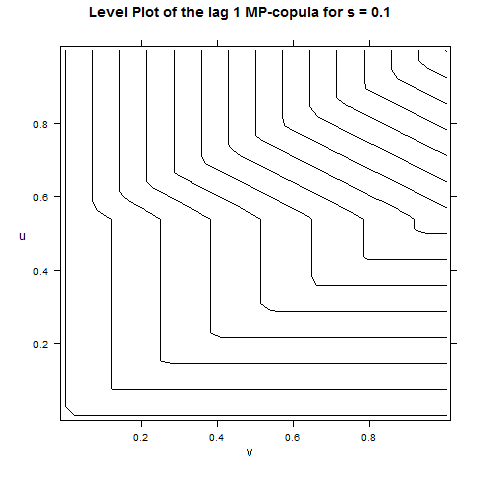}\hskip.3cm
\includegraphics[width=0.22\textwidth]{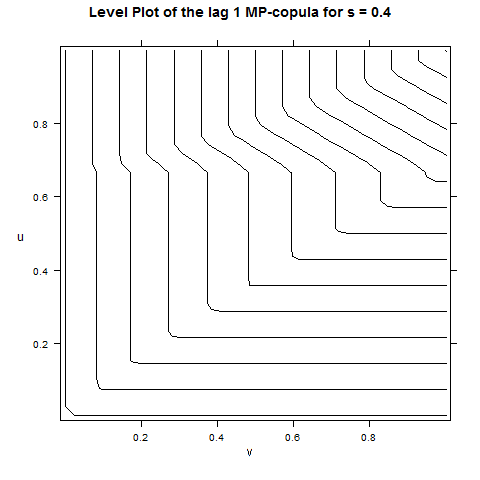}\hskip.3cm
\includegraphics[width=0.22\textwidth]{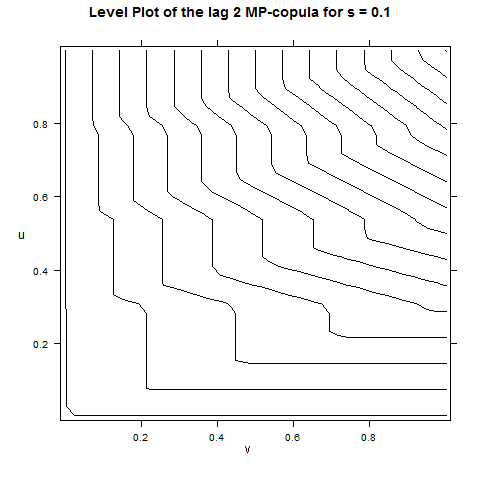}\hskip.3cm
\includegraphics[width=0.22\textwidth]{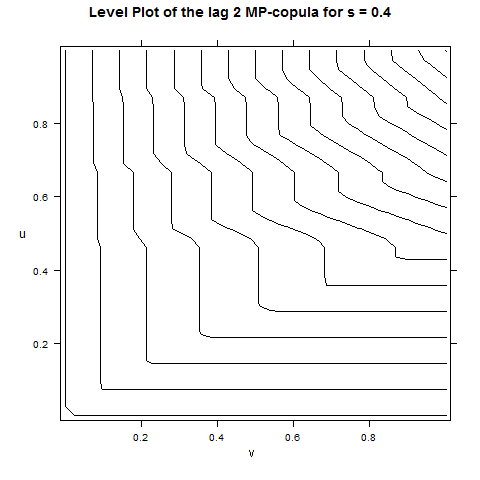}
}
\caption{From left to right, three dimensional plots of the lag 1 MP copula for $s\in\{0.1,0.4\}$ and lag 2 MP copula for the same parameters (top panel) and respective level sets (bottom panel) obtained from approximation \eqref{copapp}. }\label{f2}
\end{figure}

Obtaining random samples from an MP copulas is a trivial task in view of Proposition \ref{supp}. There we show that the support of an MP copula is the union of graphs of certain linear functions. The following algorithm can be used to generate a pair of variates from a bidimensional MP copula  for $\p$ an almost everywhere increasing function.
\begin{itemize}
\item[1.] Generate an uniform $(0,1)$ variate $u$.
\item[2.] Let $\kappa_0$ denote the index for which $u\in\big[F_0(a_{h,\kappa_0}),F_0(a_{h,\kappa_0+1})\big]$ and set $v=\ell^+_{h,\kappa_0}(u)$.
\item[3.] The desired pair is $(u,v)$.
\end{itemize}
In practice the $T_s$-invariant probability measure is unknown and $F_0$ has to be approximated. Furthermore, most of times the nodes related to $T^h_s$, for $h>0$, $s\in(0,1)$ cannot be analytically obtained. However, we can apply the approximations developed in this section together with the algorithm above to obtain approximated samples from MP copulas. In Figure \ref{samp} we show 500 approximated sample points from a lag 1 and 2 MP copula for  $s\in\{0.1,0.4\}$ and $\p$ an almost everywhere. Obvious modifications in the algorithm, allow handling the case where $\p$ is an almost everywhere decreasing function.

\begin{figure}[!ht]
\centering
\mbox{
\includegraphics[width=0.23\textwidth]{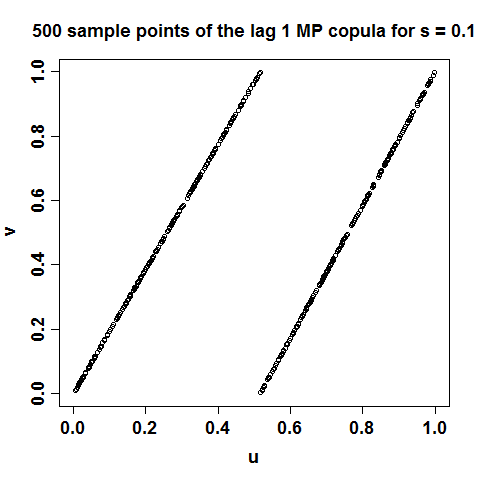}\hskip.3cm
\includegraphics[width=0.23\textwidth]{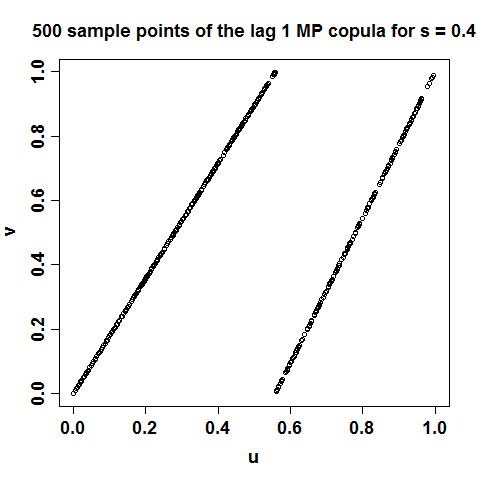}\hskip.3cm
\includegraphics[width=0.23\textwidth]{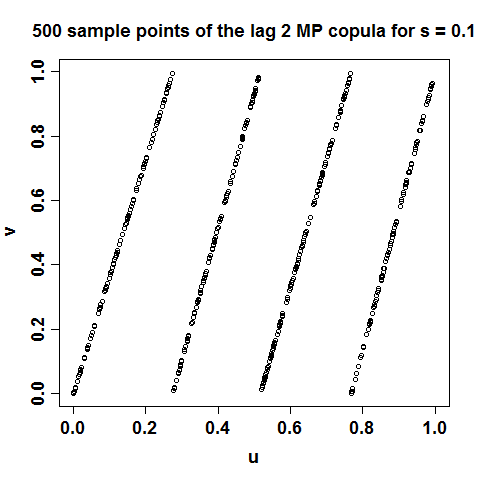}\hskip.3cm
\includegraphics[width=0.23\textwidth]{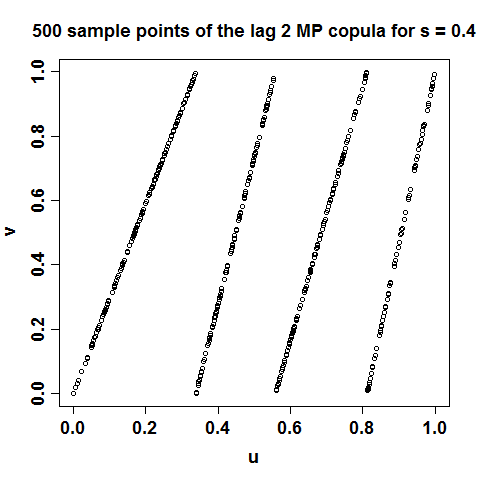}
  }
  \caption{Left to right: 500 approximated sample points from  a lag 1 MP copula for $s\in\{0.1,0.4\}$ and lag 2 MP copula for the same parameters. }\label{samp}
\end{figure}
\begin{rmk}
For small values of the lag, the resemblance of the sample to a piecewise continuous function is very clear, but this is not always the case as it can be seen in Figure \ref{special}, where we show 500 approximated sample points of the lag 4, 5 and 7 MP copulas for $s=0.2$. This is a general principle, for a fixed sample size the higher the lag, the harder to distinguish the support of the copula based on the sample, since the number of branches of $T_s^h$ grow as fast as $2^h$. For instance, for $h=7$ in Figure \ref{special} is difficult to say that the sample came from a singular copula at all.
\end{rmk}
\begin{figure}[!h]
\centering
\mbox{
\includegraphics[width=0.23\textwidth]{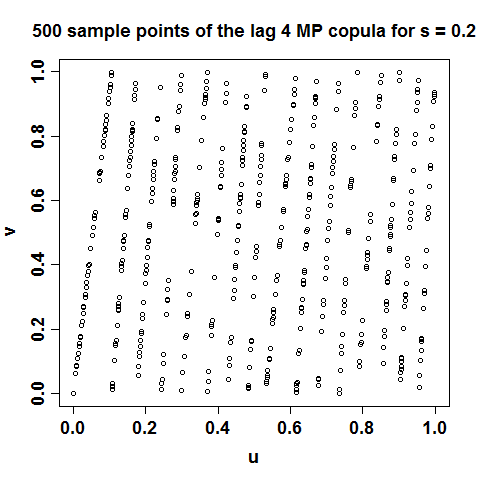}\hskip1cm
\includegraphics[width=0.23\textwidth]{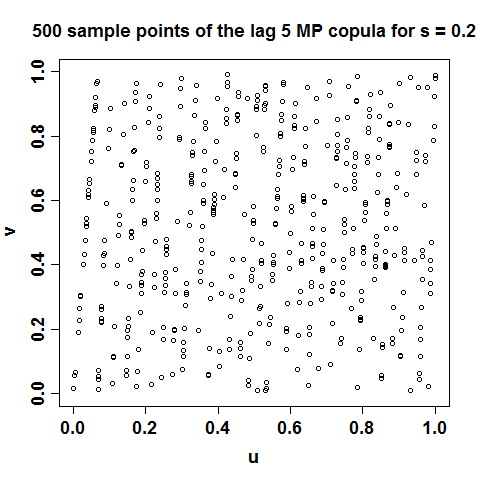}\hskip1cm
\includegraphics[width=0.23\textwidth]{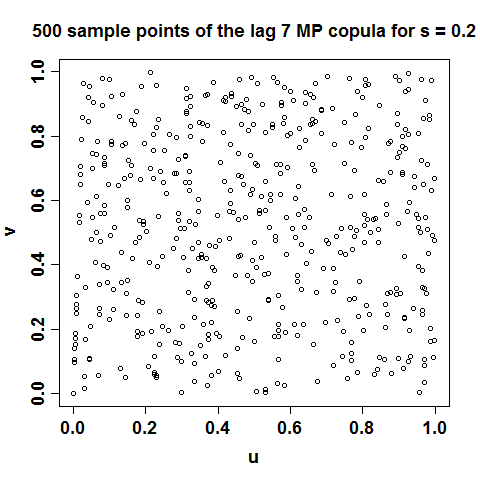}
  }
  \caption{Left to right: 500 approximated sample points from the lag 4, 5 and 7 MP copulas for $s=0.2$. }\label{special}
\end{figure}

\section{Application}

In this section we apply the theory developed in Section 3 to the problem of estimating the parameter $s$ in  MP processes. This problem have been studied before in Olbermann et. al (2007), where the authors adapt and apply several estimation methods  from the classical theory of long-range dependence to the problem of estimating the parameter $s$.  In this section we propose an estimator for the parameter $s$ based on the ideas developed in Section 3, which is both, precise and fast.

The mathematical framework is as follows. Let $s\in(0,1)$ and consider the associated MP process $\{X_n\}_{n\in\N}$ for $\phi$ the identity map. Suppose we observe a realization $x_1,\cdots,x_N$ from $X_n$ and our goal is to estimate the unknown  parameter $s$. Let $a:=a(s)\in\big(\frac12,\frac{\sqrt{5}-1}{2}\big)$ denote the discontinuity point of the MP transformation and notice that $s$ and $a$ are related by
\small
\[a+a^{1+s}=1\quad \Longleftrightarrow \quad s=\frac{\log(1-a)}{\log(a)}-1.\]
\normalsize
Hence, the problem of estimating $s$ is equivalent to the problem of estimating $a$.

To define the proposed estimator, we start by observing that Proposition \ref{supp} for $h=1$ implies that the lag 1 MP copula's support is given by the graph of the piecewise linear function
\small
\[\ell(x):=\left\{\begin{array}{cc}
                   \frac{x}{F_0(a)}\,, & \text{ if } x\in\big[0,F_0(a)\big) \\
                   \frac{x-F_0(a)}{1-F_0(a)}\,, & \text{ if } x\in\big[F_0(a),1].
                 \end{array}
\right.\]
\normalsize
so that, any (independent or correlated) sample from a lag 1 MP copula consists of points scattered through the lines defined by $\ell$ (see Figure \ref{samp}). The discontinuity point of the function $\ell$ is precisely $F_0(a)$. Let $y_i=F_0(x_i)$, for $i=1,\cdots,N$, and consider the series $\{ u_i:=(y_i,y_{i+1})\}_{i=1}^{N-1}$. By Sklar's Theorem, $\{u_i\}_{i=1}^{N-1}$ is a (correlated) sample  from the lag 1 MP copula, so all points should lie in the graph of the function $\ell$.

These considerations suggest the following procedure to obtain $s$ based on a path $x_1,\cdots,x_N$ of $X_n$ within a given accuracy $\eps>0$. We choose $s_0\in(0,1)$ as an initial guess for $s$ and calculate $\hat y_i=F_n(x_i;s_0)$, $i=1,\cdots,N$, where $F_n$ is the approximation of $F_0$ given in \eqref{fn}. Next we define $\{ \hat u_i:=(\hat y_i,\hat y_{i+1})\}_{i=1}^{N-1}$, from which we estimate the slope of the two branches of the approximated sample from the lag 1 MP copula obtained by this way. The discontinuity point (and hence $s$) can then be easily calculated. In this manner we obtain an estimative $\tilde s$ which can be compared to $s_0$.  If $s_0$ is close to the true value $s$, then the difference between $\tilde s$ and $s_0$ should be small. If not, we choose another starting value and repeat the operation until obtain the desired accuracy. This leads to an optimization procedure to obtain $s$ within a predefined accuracy.

 To illustrate the procedure, Figure \ref{spec}(a) shows a sample path of an MP process for $s=0.2$, with $N=200$ while Figure \ref{spec}(b) shows the sample path $y_i=F_n(x_i;0.2)$, $i=1,\cdots,N$. From $\{y_i\}_{i=1}^N$, we construct the sequence $\{u_i\}_{i=1}^{N-1}$, where $u_i=\big(F_n(y_i;s),F_n(y_{i+1};s)\big)$, $i=1,\cdots,N-1$, for the correctly specified $s=0.2$ and for $s=0.3$. Figure \ref{spec}(c) presents the graph of $\{u_i\}_{i=1}^{N-1}$ obtained from the correct specification of $s$, while Figure \ref{spec}(d) shows the graph of the misspecified one. In Figures \ref{spec}(c) and \ref{spec}(d), the solid lines represent the respective theoretical support of the copula given in Proposition \ref{supp}. Some distortion in the points can be seen given to the use of the approximation $F_n$ instead of the theoretical $F_0$, especially in lower quantiles. From Figure \ref{spec}(d) it is clear that the line obtained from the sequence $\{u_i\}_{i=1}^{N-1}$ and the theoretical one for the chosen value of $s_0$, namely, 0.3, do not match, while for the correct specified one in Figure \ref{spec}(c), they do.

\begin{figure}[!h]
\centering
\mbox{
\subfigure[]{\includegraphics[width=0.23\textwidth]{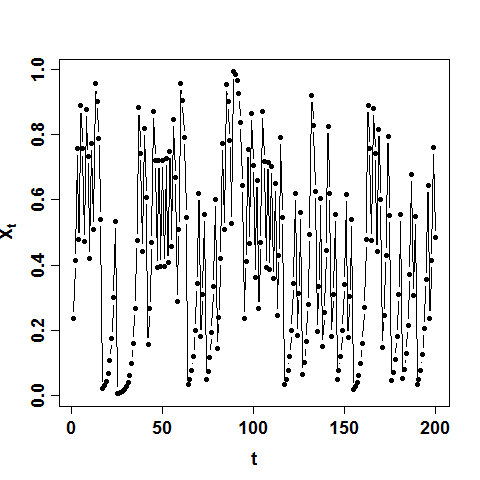}}\hskip.2cm
\subfigure[]{\includegraphics[width=0.23\textwidth]{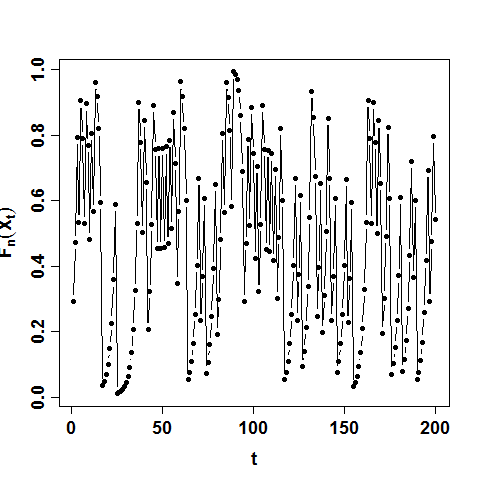}}\hskip.2cm
\subfigure[]{\includegraphics[width=0.23\textwidth]{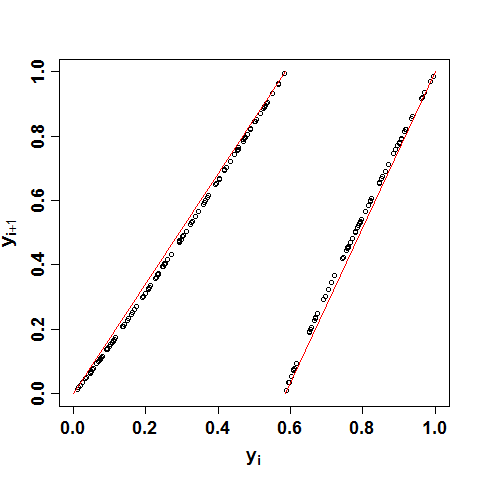}}\hskip.2cm
\subfigure[]{\includegraphics[width=0.23\textwidth]{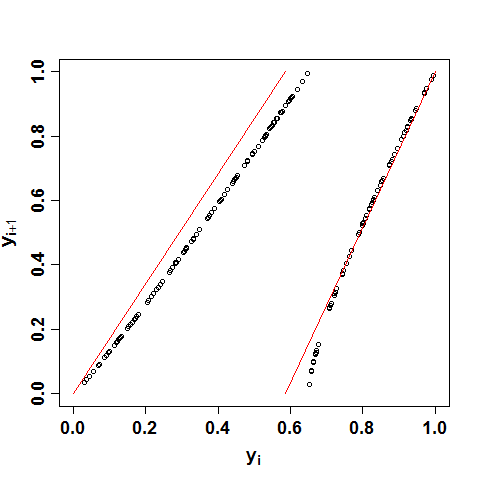}}
}
  \caption{(a) Sample path $x_1,\cdots,x_{200}$ of an MP process with $s=0.2$ starting at $\sqrt{5}(\mathrm{mod}1)$. (b) Sample path $y_i=F_n(x_i)$. Plot of $u_i=(y_i,y_{i+1})$ for the (c) correct  and (d) misspecified $s$. The solid lines correspond to the theoretical support of the respective lag 1 MP copula. }\label{spec}
\end{figure}

The procedure just outlined is, however, computationally expensive given the fact that to calculate the approximation $F_n$ with reasonable stability and accuracy, for each $s$, it requires the construction of an iteration vector of large size (see Figure \ref{f1} and Table \ref{t1}). Such an optimization procedure can easily take hundreds of evaluations, depending on the desired accuracy, and hence, can be a very time consuming task.

To overcome this difficulty, observe that in Figures \ref{spec}(a) and \ref{spec}(b), little differences can be seen between them. In fact, since $F_0$ is a smooth distribution, an alternative is to apply the previous argument to the points $\hat v_i:=(x_i,x_{i+1})$, $i=1,\cdots,N$. There will certainly be some distortion in the lines due to the absence of $F_0$, but we expect to be able to estimate the discontinuity point $a$ based on $v_i$ by similar idea as before.

As an illustration, Figure \ref{spec2} shows the plots of $v_i=(x_i,x_{i+1})$, $i=1,\cdots,199$, based on MP processes with $s\in\{0.2,0.4,0.6,0.8\}$ all starting at $\sqrt{5}(\mathrm{mod}1)$. The solid lines correspond to the lines joining the points $(0,0)$ and $(a,1)$ and joining $(a,0)$ and $(1,1)$, where $a$ denotes the correct discontinuity point of the respective MP transformation. From the graphs in Figure \ref{spec2} we see the identification of the line based on $v_i$ with the correct line, especially in the second branch of the graph. That is so because $a\in\big(\frac12,\frac{\sqrt{5}-1}{2}\big)$, so that the second branch, being smaller, is less affected by the distortion due to the absence of $F_0$.

\begin{figure}[!h]
\centering
\mbox{
\subfigure[]{\includegraphics[width=0.23\textwidth]{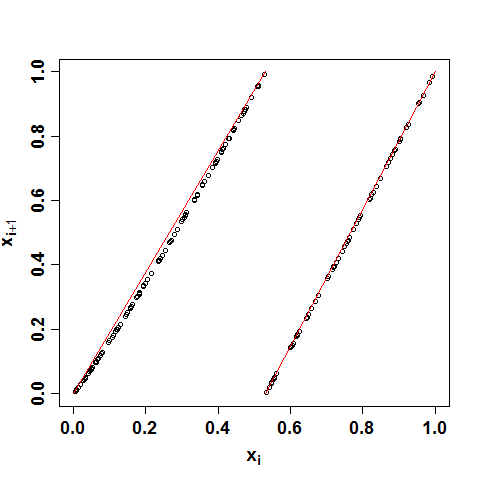}}\hskip.2cm
\subfigure[]{\includegraphics[width=0.23\textwidth]{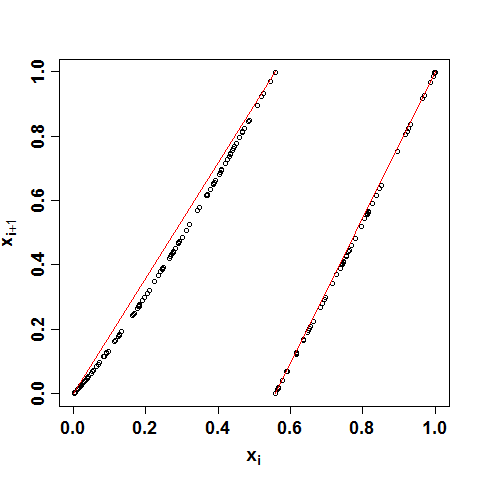}}\hskip.2cm
\subfigure[]{\includegraphics[width=0.23\textwidth]{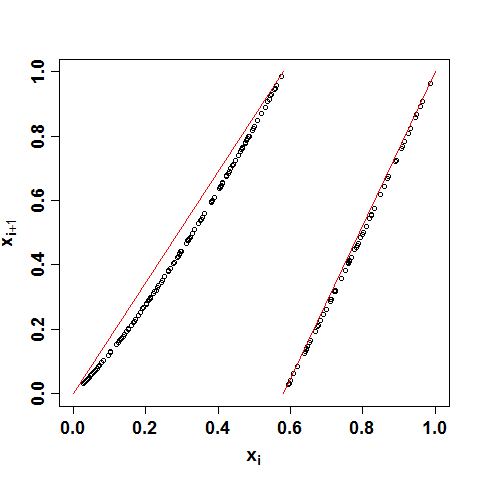}}\hskip.2cm
\subfigure[]{\includegraphics[width=0.23\textwidth]{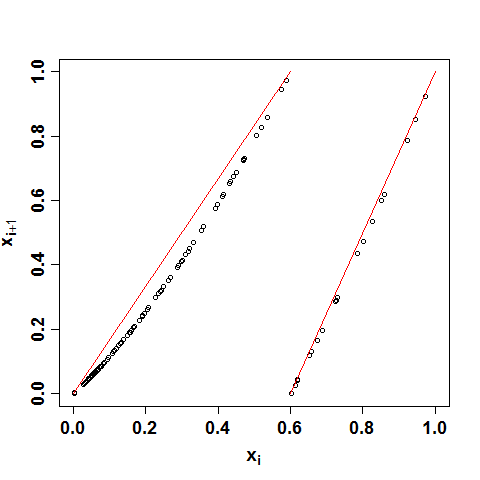}}
}
  \caption{Plot of $v_i=(x_i,x_{i+1})$, $i=1,\cdots,199$ from a sample path of an MP process with (a) $s=0.2$, (b) $s=0.4$, (c) $s=0.6$ and (d) $s=0.8$. The solid lines correspond to the lines joining the points $(0,0)$ and $(a,1)$ and joining $(a,0)$ and $(1,1)$, where $a$ denotes the correct discontinuity point of the respective MP transformation. }\label{spec2}
\end{figure}

In order to assess the performance of the estimation procedure, we perform the following experiment. We randomly select 100 initial points\footnote{Tables with the initial values applied in our experiments and the complete simulation results are available upon request.} in $(0,1)$ and for each initial point we generate a path (of size $N=200$) of an MP process for $s\in\{0.1,0.15,\cdots,0.95\}$. For each path, say $x_1,\cdots,x_{200}$, we perform the proposed estimation procedure. In order to estimate $a$, we applied two methods: the first one is a simple least squares method applied to the points lying in the second branch of $(x_i,x_{i+1})$. The second method is the following: let $(x_{m_0},x_{m_0+1})$ and $(x_{m_1},x_{m_1+1})$ denote the points among the ones lying on the second branch of $\{(x_i,x_{i+1})\}_{i=1}^{N-1}$ for which $x_{m_0}$ is minimum and $x_{m_1}$ is maximum. We define the estimator of $a$, say $\hat a$, as
\small
\begin{equation}\label{aest}
\hat{a}=-\frac BA\,,\quad\text{ where } \quad A:=\frac{x_{m_1+1}-x_{m_0+1}}{x_{m_1}-x_{m_0}}\quad\mbox{ and }\quad B:=x_{m_0+1}-Ax_{m_0}.
\end{equation}
\normalsize
For reference, in the subsequent we shall call this the \emph{min-max procedure}. Geometrically, $\hat a$ is the inverse image of 0 by the linear function joining $(x_{m_0},x_{m_0+1})$ and $(x_{m_1},x_{m_1+1})$.

\begin{figure}[!h]
\centering
\mbox{
\subfigure[]{\includegraphics[width=0.23\textwidth]{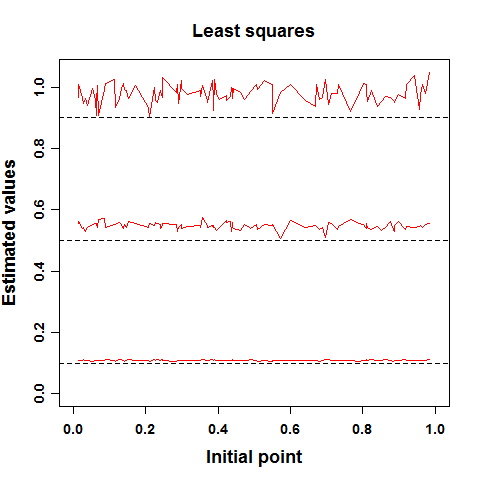}}\hskip.2cm
\subfigure[]{\includegraphics[width=0.23\textwidth]{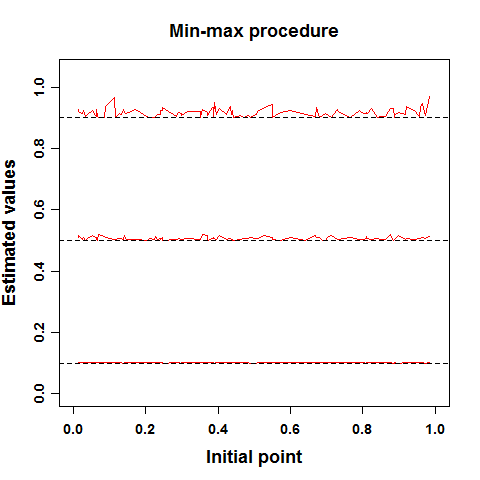}}\hskip.2cm
\subfigure[]{\includegraphics[width=0.23\textwidth]{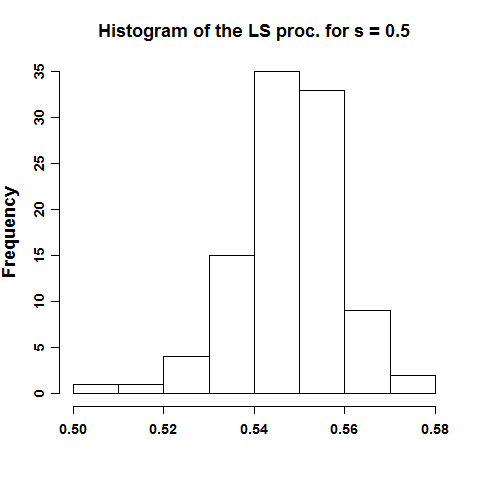}}\hskip.2cm
\subfigure[]{\includegraphics[width=0.23\textwidth]{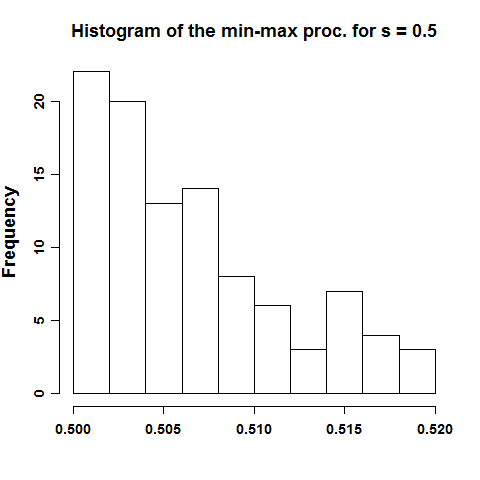}}
}
  \caption{Plot of the estimated values for $s\in\{0.1,0.5,0.9\}$ for 100 random initial points by using (a) the least squares procedure and (b) the min-max procedure. The dashed lines correspond to the correct value of $s$. Also shown the histogram of the estimated values for $s=0.5$ by using (c) the least squares procedure and (d) the min-max procedure.}\label{spec3}
\end{figure}

Table \ref{table1} summarizes the experiment results by presenting the mean, range, standard deviation (st.d.) and mean square error (mse) of the results. Figures \ref{spec3}(a) and \ref{spec3}(b) present graphically the results for both methods for $s\in\{0.1,0.5,0.9\}$ while in Figures \ref{spec3}(c) and \ref{spec3}(d), the histogram of the results for $s=0.5$ are presented. From Table \ref{table1} and Figure \ref{spec3}, we see that the  min-max procedure (MM) outperforms the least squares estimates (LS) obtained. Some bias can be seen for both estimates, especially when $s$ increases.

\begin{table}[!h]
\renewcommand{\arraystretch}{1.2}
\setlength{\tabcolsep}{3pt}
\caption{Summary statistics of the experiment results. Presented are the mean estimate ($\hat s$), the range, standard deviation (st.d.) and the mean square error (mse) of the estimates. The min-max procedure is denoted by MM while LS denotes the least squares.}\label{table1}
\vskip.3cm
\centering
{\footnotesize
\begin{tabular}{|c||c|c|c|c|c||c|c|c|c|c|}
\hline
Proc.&$s$&$\hat{s}$&range&st.d.&mse&$s$&$\hat{s}$&range&st.d.&mse\\
\hline\hline
MM  &\multirow{2}{*}{0.10}    	 &   	0.1008	&   $[0.1000,0.1024]$    &	0.0006	&   $0^\ast$&	\multirow{2}{*}{0.55 }	  &   	0.5581	&   $[0.5500,0.5888]$    &	0.0069	 &  0.0001\\
 LS &   	  &   	0.1087	&   $[0.1056,0.1128]$    &	0.0017	&   0.0001&	    	 &   	0.6036	&   $[0.5501,0.6287]$    &	0.0142&	   0.0031\\
\hline							
MM &\multirow{2}{*}{0.15} 	  &     	0.1516	&$[ 0.1500,0.1597]$    &	0.0015	&   $0^\ast$&	\multirow{2}{*}{0.60  } 	  &  	0.6091	&   $[0.6000,0.6451]$    &	0.0090	  & 0.0002\\
LS &   	 &   	0.1632	&   $[0.1573,0.1710]$    &	0.0026	&   0.0002&    	 &  	0.6545	&   $[0.6012,0.7023]$    &	0.0186	&   0.0033\\
\hline							
MM &\multirow{2}{*}{0.20} 	  &     	0.2023	&  $[0.2001,0.2101]$    &	0.0021&	   $0^\ast$&	\multirow{2}{*}{0.65}	  &      	0.6600	&   $[0.6501,0.6927]$    &	0.0087	&   0.0002\\
LS &    	 &   	0.2179	&   $[0.2098,0.2315]$    &	0.0038	&   0.0003&	    	 &   	0.7089	&   $[0.6579,0.7584]$    &	0.0197	&   0.0039\\
\hline					
MM &\multirow{2}{*}{0.25}  	  &    	0.2534	&  $[ 0.2501,0.2632]$    &	0.0027	 &  $0^\ast$&	\multirow{2}{*}{0.70    }	  & 	0.7125	&  $[ 0.7001,0.7726]$    &	0.0119	  & 0.0003\\
LS &    	 &   	0.2730	&   $[0.2636,0.2875]$    &	0.0049	&   0.0006&   	&  	0.7646	&  $[ 0.7038,0.8170]$    &	0.0226	&   0.0047\\
\hline						
 MM &\multirow{2}{*}{0.30  }	  &   	0.3036	&  $[ 0.3000,0.3128]$    &	0.0028	&   $0^\ast$&	\multirow{2}{*}{0.75   }	  &   	0.7621	&  $[ 0.7502,0.8019]$    &	0.0110&	   0.0003\\
 LS & 	    	  & 0.3272	&   $[0.3128,0.3410]$    &	0.0052	&   0.0008&	    	 &  	0.8177	&   $[0.7505,0.8612]$    &	0.0246	&   0.0052\\
\hline				
  MM &\multirow{2}{*}{0.35 }	  &   	0.3544	&  $[ 0.3500,0.3669]$    &	0.0039	&   $0^\ast$&	\multirow{2}{*}{0.80    }	  &  0.8165	&   $[0.8001,0.8659]$    &	0.0131	&   0.0004\\
  LS & 	   	 & 0.3835	&   $[0.3550,0.4024]$    &	0.0078	&   0.0012&	    	  &   	0.8781	&   $[0.8005,0.9449]$    &	0.0277	&   0.0069\\
\hline			
  MM &\multirow{2}{*}{0.40    }	  &	0.4050	&   $[0.4000,0.4214]$    &	0.0049	&   $0^\ast$&	\multirow{2}{*}{0.85}	  &     	0.8677	&  $[ 0.8500,0.9428]$    &	0.0151	&   0.0005\\
   LS &  	   &  	0.4367	&   $[0.4082,0.4570]$    &	0.0078	&   0.0014&	    	 &   	0.9307	&   $[0.8507,1.0111]$    &	0.0297	&   0.0074\\
\hline					
 MM &\multirow{2}{*}{0.45 }	  &    	0.4556	&   $[0.4501,0.4703]$    &	0.0046	  & 0.0001&	\multirow{2}{*}{0.90 } 	  &  	0.9172	&   $[0.9002,0.9704]$    &	0.0141&	   0.0005\\
LS &    	 &   	0.4909	&   $[0.4702,0.5174]$    &	0.0102	&   0.0018&   	 &   	0.9774	&   $[0.9011,1.0477]$    &	0.0306	&   0.0069\\
\hline					
MM	  & \multirow{2}{*}{0.50  }  &  	0.5065	&   $[0.5001,0.5189]$    &	0.0051	&   0.0001&	\multirow{2}{*}{0.95} 	  & 	0.9706	&   $[0.9500,1.0517]$    &	0.0189	&   0.0008\\
 LS &    	 &  	0.5475	&   $[0.5059,0.5746]$    &	0.0111	 &  0.0024&	    	  &   	1.0371	&   $[0.9500,1.1641]$    &	0.0384	&   0.0090\\
\hline											
\multicolumn{11}{l}{{\scriptsize Note: $0^\ast$ means that the mse is smaller than $5\times10^{-5}$.}}
\end{tabular}}
\end{table}

The min-max procedure can be carried out even for time series of sample size as small as 20, as long as the second branch of $\{(x_i,x_{i+1})\}_{i=1}^{N-1}$ contains at least 2 points, which does not always happen (for instance, for $N=110$, a sample path of an MP processes with $s=0.8$ starting at $\sqrt{74}(\mathrm{mod}1)$ has only one point in the second branch). In such a situation, a straightforward adaptation of the min-max procedure can be applied to the first branch and still yields reasonable estimates. The closer to 0 and 1 the points $x_{m_0}$ and $x_{m_1}$ in \eqref{aest} are, respectively, the better the estimation performance.

\FloatBarrier
\section{Conclusions}
\indent In this work we derive the copulas related to Manneville-Pomeau processes for almost everywhere monotonic functions $\p$. In the bidimensional case, we find that the copulas of any random pair $(X_t,X_{t+h})$ depend only on the lag $h$ and are singular. The support of the copulas is derived as well.

As for the multidimensional case,  when $\p$ is increasing almost everywhere, the functional form of the copulas are very similar to the ones in derived in the bidimensional case. We conclude that the copulas of vectors {\small$(X_{t_1},\cdots,X_{t_n})$} and {\small$\big(U_0,T^{t_2-t_1}_s(U_0),\cdots,T^{t_n-t_1}_s(U_0)\big)$} are the same. When $\p$ is decreasing almost everywhere, we find that the copulas of an $n$-dimensional random vector from an MP process can be deduced from the ones derived for the increasing case.

The copulas derived here depend on the $T_s$-invariant measure $\mu_s$ which has no explicit formula.  For the bidimensional case, we propose an approximation to the copula which is shown to converge uniformly to the true copula. From this approximation, we are able to present plots of the copulas for different parameters and lags and to present a simple algorithm to generate approximated samples from the copulas. Some simple numerical calculation are presented to test the steps of the approximation. To illustrate the usefulness of the theory, we derive  a fast estimation procedure of the underlying parameter $s$ in Manneville-Pomeau processes.

\subsubsection*{Acknowledgements}
\small
Sílvia R.C. Lopes research was partially supported by CNPq-Brazil, by CAPES-Brazil, by INCT {\it em Matem\'atica} and also by Pronex {\it Probabilidade e Processos Estoc\'asticos} - E-26/170.008/2008 -APQ1. Guilherme Pumi was partially supported by CAPES/Fulbright Grant BEX 2910/06-3 and by CNPq-Brazil.
\normalsize
\subsubsection*{References}
\small
\begin{description}
\item[] Billingsley, P. (1999). {\slshape Convergence of Probability Measures.} 2nd ed., New York: John Wiley. MR1700749
\item[] Chazottes, J.-R., P. Collet and B. Schmitt (2005). ``Statistical Consequences of the Devroye Inequality
for Processes. Applications to a Class of Non-Uniformly Hyperbolic Dynamical Systems''. {\slshape Nonlinearity},
\textbf{18(5)}, 2341-2364. MR2165706
\item[] Dellnitz, M. and Junge, O. (1999). ``On the Approximation of Complicated Dynamical Behavior''. {\slshape SIAM Journal of Numerical Analysis}, Vol. \textbf{36(2)}, 491-515. MR1668207
\item[] Fisher, A.M. and Lopes, A. (2001). ``Exact Bounds for the Polynomial Decay of Correlation, $1/f$ Noise and the CLT for the Equilibrium State of a Non-H\"{o}lder Potential''. {\slshape Nonlinearity}, Vol. \textbf{14}, 1071-1104. MR1862813
\item[] Joe, H. (1997). {\slshape Multivariate Models and Dependence Concepts.} Monographs on Statistics and Applied Probability, 73. London: Chapman $\&$ Hall. MR1462613
\item[] Koles\'arov\'a, A.; Mesiar, R.; Sempi, C. (2008). ``Measure-preserving transformations, copul{\ae} and compatibility''. {\slshape Mediterranean  Journal of Mathematics}, \textbf{5(3)}, 325-339. MR2465579
\item[] Lopes, A. and Lopes, S.R.C. (1998). ``Parametric Estimation and Spectral Analysis of Piecewise Linear Maps of the Interval''. {\slshape Advances in Applied Probability}, Vol. \textbf{30}, 757-776. MR1663557
\item[] Maes, C.; Redig, F.; Takens, F.;  Moffaert, A. and Verbitski, E. (2000). ``Intermittency and Weak Gibbs States''. {\slshape Nonlinearity}, Vol. \textbf{13}, 1681-1698. MR1781814
\item[] Nelsen, R.B. (2006). {\slshape An Introduction to Copulas}. New York: Springer-Verlag. 2nd Edition. MR2197664
\item[] Olberman, B.P; Lopes, S.R.C and Lopes, A.O. (2007). ``Parameter Estimation in Manneville-Pomeau Processes''. Unpublished manuscript. Source: arXiv:0707.1600.
\item[] Pianigiani, G. (1980). ``First Return Map and Invariant Measures''. {\slshape Israel Journal of Mathematics}, Vol. \textbf{35}, 32-48. MR0576460
\item[] Pollicott, M. and Yuri, M. (1998). {\slshape Dynamical Systems and Ergodic Theory}. Cambridge: Cambridge University Press. MR1627681
\item[] Royden, H.L. (1988). {\slshape Real Analysis.} New York: Macmillan. 3nd Edition. MR1013117
\item[]  Schweizer, B. and Sklar, A. (2005). {\slshape Probabilistic Metric Spaces}. Mineola: Dover Publications. MR0790314
\item[] Young, L-S. (1999).  ``Recurrence Times and Rates of Mixing''. {\slshape Israel Journal of Mathematics}, Vol. \textbf{110}, 153-188. MR1750438
\item[] Zebrowsky, J.J. (2001). ``Intermittency in Human Heart Rate Variability''. {\slshape Acta Physica Polonica B}, Vol. \textbf{32}, 1531-1540.
\end{description}
\end{document}